\theoremstyle{plain}
          \newtheorem{theorem}{Theorem}[section]
          \newtheorem{proposition}[theorem]{Proposition}
          \newtheorem{lemma}[theorem]{Lemma}
          \newtheorem{corollary}[theorem]{Corollary}
        \theoremstyle{definition}
          \newtheorem{definition}[theorem]{Definition}
          \newtheorem{example}[theorem]{Example}
          \newtheorem{remark}[theorem]{Remark}
\numberwithin{equation}{section}
\def\acts{\curvearrowright}
\newcommand{\iden}{1_G}
\newcommand{\supp}{\mathrm{supp}}
\newcommand{\F}{\mathcal{F}}
\newcommand{\K}{\mathcal{K}}
\newcommand{\Prob}{\mathbb{P}}
\newcommand{\dist}{\mathrm{dist}}
\newcommand{\Stab}{\mathrm{Stab}}
\newcommand{\Rep}{U_0}
\newcommand{\Hom}{\mathrm{Hom}}
\newcommand{\Aut}{\mathrm{Aut}}
\newcommand{\Cay}{\mathrm{Cay}}
\newcommand{\NP}{\mathrm{NP}}
\newcommand{\RP}{\mathrm{RP}}
\newcommand{\weight}{\mathrm{w}}
\newcommandx{\unsure}[2][1=]{\todo[linecolor=red,backgroundcolor=red!25,bordercolor=red,#1]{#2}}
\newcommandx{\change}[2][1=]{\todo[linecolor=blue,backgroundcolor=blue!25,bordercolor=blue,#1]{#2}}
\newcommandx{\info}[2][1=]{\todo[linecolor=OliveGreen,backgroundcolor=OliveGreen!25,bordercolor=OliveGreen,#1]{#2}}
\newcommandx{\improvement}[2][1=]{\todo[linecolor=Plum,backgroundcolor=Plum!25,bordercolor=Plum,#1]{#2}}
\newcommandx{\thiswillnotshow}[2][1=]{\todo[disable,#1]{#2}}
\begin{document}

\title[Counting independent sets in amenable groups]{Counting independent sets in amenable groups}

\author[Raimundo Brice\~no]{Raimundo Brice\~no}
\address{Facultad de Matem\'aticas\\Pontificia Universidad Cat\'olica de Chile\\Santiago\\Chile}
\email{raimundo.briceno@mat.uc.cl}

\thanks{The author acknowledges the support of ANID/FONDECYT de Iniciación en Investigación 11200892.}

\subjclass[2020]{Primary 82B20, 82B41, 60B10, 05C69, 68W25; secondary 37A15, 37A25, 37A50.}

\keywords{Amenable group, Independent set, Free energy, Gibbs measure, Strong spatial mixing, Computational phase transition}

\begin{abstract}
Given a locally finite graph $\Gamma$, an amenable subgroup $G$ of graph automorphisms acting freely and almost transitively on its vertices, and a $G$-invariant activity function $\lambda$, consider the free energy $f_G(\Gamma,\lambda)$ of the hardcore model defined on the set of independent sets in $\Gamma$ weighted by $\lambda$.

Under the assumption that $G$ is finitely generated and its word problem can be solved in exponential time, we define suitable ensembles of hardcore models and prove the following: if $\|\lambda\|_\infty < \lambda_c(\Delta)$, there exists a randomized $\epsilon$-additive approximation scheme for $f_G(\Gamma,\lambda)$ that runs in time $\mathrm{poly}((1+\epsilon^{-1})\lvert \Gamma/G \rvert)$, where $\lambda_c(\Delta)$ denotes the critical activity on the $\Delta$-regular tree. In addition, if $G$ has a finite index linearly ordered subgroup such that its algebraic past can be decided in exponential time, we show that the algorithm can be chosen to be deterministic. On the other hand, we observe that if $\|\lambda\|_\infty > \lambda_c(\Delta)$, there is no efficient approximation scheme, unless $\mathrm{NP} = \mathrm{RP}$. This recovers the computational phase transition for the partition function of the hardcore model on finite graphs and provides an extension to the infinite setting.

As an application in symbolic dynamics, we use these results to develop efficient approximation algorithms for the topological entropy of subshifts of finite type with enough safe symbols, we obtain a representation formula of pressure in terms of random trees of self-avoiding walks, and we provide new conditions for the uniqueness of the measure of maximal entropy based on the connective constant of a particular associated graph.
\end{abstract}

\maketitle

\setcounter{tocdepth}{1}
\tableofcontents

\section{Introduction}

Suppose that we are given a finite simple graph $\Gamma = (V,E)$ and we are asked to count its number of \emph{independent sets}. An independent set is a subset $I \subseteq V$ such that $(v,v') \notin E$ (i.e., $(v,v')$ is not an edge) for all $v,v' \in I$. For example, if $\Gamma$ is the $4$-cycle $C_4$ with $V = \{v_1,v_2,v_3,v_4\}$ and $E = \{(v_1,v_2),(v_2,v_3),(v_3,v_4),(v_4,v_1)\}$, it can be checked that there are exactly $7$ different independent sets, namely $\emptyset$, $\{v_1\}$, $\{v_2\}$, $\{v_3\}$, $\{v_4\}$, $\{v_1,v_3\}$, and $\{v_2,v_4\}$. A common generalization of this question is to ask for the ``number'' of \emph{weighted} independent sets in $\Gamma$: given a parameter $\lambda > 0$---usually called \emph{activity} or \emph{fugacity}---, we ask for the value of the summation
$$
Z_\Gamma(\lambda) := \sum_{I \in X(\Gamma)} \lambda^{\lvert I \rvert},
$$
where $X(\Gamma)$ denotes the collection of all independent sets in $\Gamma$ and $\lvert I \lvert $, the cardinality of a given independent set $I$.  Notice that we recover the original problem---i.e., to compute $\lvert X(\Gamma) \rvert$---if we set $\lambda = 1$. The sum $Z_\Gamma(\lambda)$ corresponds to the normalization factor of the probability distribution $\Prob_{\Gamma,\lambda}$ on $X(\Gamma)$ that assigns to each $I \in X(\Gamma)$ a probability proportional to $\lambda^{\lvert I \rvert}$, i.e., the so-called \emph{partition function} (also known as the \emph{independence polynomial}) of the well-known \emph{hardcore model} from statistical physics.

In general, it is not possible to compute exactly $Z_\Gamma(\lambda)$ efficiently \cite{1-luby}, even for the case $\lambda = 1$ \cite{1-xia}; technically, to compute $Z_\Gamma(\lambda)$ is an \emph{$\mathrm{NP}$-hard} problem and to compute $\lvert X(\Gamma) \rvert$ is a \emph{$\#\mathrm{P}$-complete} problem. Therefore, one may attempt to at least find ways to approximate $Z_\Gamma(\lambda)$ efficiently.

In recent years, there has been a great deal of attention to the complexity of approximating partition functions of spin systems (e.g., see \cite{1-barvinok}). Among these systems, the hardcore model, possibly together with the Ising model \cite{3-sinclair}, occupies the most important place. One of the most notable results, due to Weitz \cite{1-weitz}, and then Sly \cite{1-sly} and Sly-Sun \cite{2-sly}, is the existence of a \emph{computational phase transition} for having a \emph{fully polynomial-time approximation scheme (FPTAS)} for the approximation of $Z_\Gamma(\lambda)$. In simple terms, Weitz developed an FPTAS, a particular kind of efficient deterministic approximation algorithm, on the family of finite graphs with bounded degree $\Delta$ provided $\lambda < \lambda_c(\Delta)$, where $\lambda_c(\Delta) := \frac{(\Delta-1)^{(\Delta-1)}}{(\Delta-2)^{\Delta}}$ denotes the critical activity for the hardcore model on the $\Delta$-regular tree $\mathbb{T}_\Delta$. Conversely, a couple of years later, Sly and Sun managed to prove that the existence of even a \emph{fully polynomial-time randomized approximation scheme (FPRAS)}---which is a probabilistic and therefore weaker version of an FPTAS---for $\lambda > \lambda_c(\Delta)$ would imply that $\NP = \RP$, the equivalence of two well-known computational complexity classes which are widely believed to be different \cite{1-arora}.
 
The work of Weitz exploited a technique based on trees of self-avoiding walks and a special notion of correlation decay known as \emph{strong spatial mixing} that, in particular, holds when the graph is $\mathbb{T}_\Delta$ and $\lambda < \lambda_c(\Delta)$. Later, Sinclair et al. \cite{1-sinclair} studied refinements of Weitz's result by considering families of finite graphs parameterized by their \emph{connective constant} instead of their maximum degree, and established that there exists an FPTAS for $Z_\Gamma(\lambda)$ for families of graphs with connective constant bounded by $\mu$, whenever $\lambda < \frac{\mu^\mu}{(\mu-1)^{(\mu+1)}}$.

Now, if $\Gamma$ is an infinite graph, most of these concepts stop making sense. One way to deal with this issue is by choosing an appropriate normalization and by using the \emph{DLR formalism}. The idea is roughly the following: suppose that we have a sequence $\{\Gamma_n\}_n$ of finite subgraphs that ``exhausts'' $\Gamma$ in some way. This sequence induces two other sequences: a sequence $\{Z_{\Gamma_n}(\lambda)\}_n$ of partition functions and a sequence $\{\Prob_{\Gamma_n,\lambda}\}_n$ of probability distributions. A way to extend the idea of ``number of weighted independent sets (per site)'' in $\Gamma$ is by considering the sequence $\{Z_{\Gamma_n}(\lambda)^{1/\lvert \Gamma_n \rvert}\}_n$ and hoping that it converges. Under the right assumptions on $\Gamma$ and $\{\Gamma_n\}_n$, this is exactly the case and something similar happens to $\{\Prob_{\Gamma_n,\lambda}\}_n$. Moreover, there is an intimate connection between the properties of the limit measures and our ability to estimate the value of $\lim_n \lvert \Gamma_n \rvert^{-1} \log Z_{\Gamma_n}(\lambda)$, i.e., to ``approximately count'' it. We denote this limit---which, a priori, may depend on the sequence $\{\Gamma_n\}_n$---by $f_{\{\Gamma_n\}_n}(\Gamma,\lambda)$ and call it the \emph{free energy} of the hardcore model $(\Gamma,\lambda)$, one of the most crucial quantities in statistical physics \cite{1-baxter,1-georgii,1-simon}.

It can be checked that if $\Gamma$ is finite, to approximate the partition function $Z_\Gamma(\lambda)$ with a multiplicative error (in polynomial time) is equivalent to approximate the free energy $f_{\{\Gamma\}_n}(\Gamma,\lambda)$---where $\{\Gamma\}_n$ is the constant sequence which immediately exhausts the graph---with an additive error \cite{2-sinclair} (in polynomial time). Therefore, the problem of approximating $f_{\{\Gamma\}_n}(\Gamma,\lambda)$ recovers the problem of approximating the partition function in the finite case and, at the same time, extends the problem to the infinite setting.

The main goal of this paper is to establish a computational phase transition for the free energy on ensembles of---possibly infinite---hardcore models. In other words, we aim to prove the existence of an efficient additive approximation algorithm for the free energy when the activity is low and to establish that there is no efficient approximation algorithm for the free energy when the activity is high, unless $\mathrm{NP} = \mathrm{RP}$.

There have been many recent works related to the study of correlation decay properties and its relation to approximation algorithms for the free energy (and related quantities such as pressure, capacity, and entropy) in the infinite setting \cite{1-gamarnik,1-marcus,1-briceno,1-wang,1-pavlov,3-marcus,2-marcus}. In this work we put all these results in a single framework, which also encompasses the results from Weitz, Sly and Sun, and Sinclair et al., and at the same time generalizes them.

In 2009, Gamarnik and Katz \cite{1-gamarnik} introduced what they called the \emph{sequential cavity method}, which can be regarded as a sort of infinitary \emph{self-reducibility} property \cite{1-jerrum}. Combining this method with Weitz's results, they managed to prove that the free energy of the hardcore model in the Cayley graph of $\mathbb{Z}^d$ with canonical generators admits a (deterministic) $\epsilon$-additive approximation algorithm that runs in time polynomial in $\epsilon^{-1}$ whenever $\lambda < \lambda_c(2d)$, where $2d$ is the maximum degree of the graph. Related results were also proven by Pavlov in \cite{1-pavlov}, who developed an approximation algorithm for the \emph{hard square entropy}, i.e., the free energy of the hardcore model in the Cayley graph of $\mathbb{Z}^2$ with canonical generators and activity $\lambda = 1$. Later, there were also some explorations due to Wang et al. \cite{1-wang} in Cayley graphs of $\mathbb{Z}^2$ with respect to other generators (e.g., the \emph{non-attacking kings} system) in the context of information theory and algorithms for approximating capacities.

In this paper we prove that all these results fit and can be generalized to hardcore models $(\Gamma,\lambda)$ such that (1) $\Gamma$ is a locally finite graph, (2) $G \acts \Gamma$ is free and almost transitive for some countable amenable subgroup $G \leq \Aut(\Gamma)$, and (3) $\lambda: V \to \mathbb{Q}_{>0}$ is a---not necessarily constant---$G$-invariant activity function. In addition, for the algorithmic implications we assume that $G$ satisfies some of the recursion-theoretic assumptions described below. Given this setting, we consider a F{\o}lner sequence $\{F_n\}_n$, a fundamental domain $U_0 \subseteq V$ of $G \acts \Gamma$, and the sequence of finite subgraphs $\{\Gamma_n\}_n$ induced by $\{F_nU_0\}_n$. First, we show that $f_{\{\Gamma_n\}_n}(\Gamma,\lambda)$ is independent of $\{F_n\}_n$ and $U_0$, and that the limit $f_{\{\Gamma_n\}_n}(\Gamma,\lambda)$---which we denote by $f_G(\Gamma,\lambda)$ to emphasize the independence of $\{F_n\}_n$ and $U_0$---can be expressed as an infimum over some suitable family of finite subgraphs of $\Gamma$. Next, based on results from \cite{1-gurevich,2-briceno}, we prove in Theorem \ref{thm:smb} that $f_G(\Gamma,\lambda)$ can be obtained as the pointwise limit of a Shannon-McMillan-Breiman type ratio with regards to any Gibbs measure on $X(\Gamma)$. In Theorem \ref{thm:rep1}, we prove that if $\lambda$ is such that $(\Gamma,\lambda)$ satisfies strong spatial mixing, then $f_G(\Gamma,\lambda)$ corresponds to the evaluation of a random information function, based on ideas about random invariant orders and the \emph{Kieffer-Pinsker formula} for measure-theoretical entropy introduced in \cite{1-alpeev}. Then, in Theorem \ref{thm:rep2}, using the previous representation theorem and the techniques from \cite{1-weitz}, we provide a formula for $f_G(\Gamma,\lambda)$ in terms of trees of self-avoiding walks in $\Gamma$. These first three theorems can be regarded as a preprocessing treatment of $f_G(\Gamma,\lambda)$ in order to obtain an \emph{arboreal representation of free energy} to develop approximation techniques, but we believe that they are of independent interest.

Later, we consider a finitely generated amenable group $G$ with a prescribed set of generators $S$ such that its word problem can be solved in exponential time. This last requirement seems to be natural and many groups satisfy it (for example, any linear group, including all abelian, all nilpotent groups and, more generally, all virtually polycyclic groups). Given a positive integer $\Delta$ and $\lambda_0 > 0$, we denote by $\mathcal{H}_G^\Delta(\lambda_0)$ the ensemble of hardcore models $(\Gamma,\lambda)$ such that $G \acts \Gamma$ is free and almost transitive, the maximum degree of $\Gamma$ is bounded by $\Delta$, and the values of $\lambda$ are bounded from above by $\lambda_0$. Then, in Theorem \ref{thm:rand-approx}, we establish the following algorithmic implication: if $\lambda_0 < \lambda_c(\Delta)$, there exists an \emph{additive FPRAS} on $\mathcal{H}_G^{\Delta}(\lambda_0)$ for $f_G(\Gamma,\lambda)$, where $\lambda_c(\Delta)$ denotes the critical activity on the $\Delta$-regular tree $\mathbb{T}_\Delta$. This can be considered as a confirmation in the amenable setting of the ``algorithmic version''---as called in \cite{1-weitz}---of Conjecture 2.1 in \cite{1-sokal}. In addition, under the extra assumption that $G$ has a finite index linearly ordered subgroup $(H, \prec)$ such that its algebraic past $\Phi_\prec = \{g \in H: g \prec 1_G\}$ can be decided in exponential time, we prove that the algorithm can be chosen to be deterministic, i.e., there exists an \emph{additive FPTAS}. Groups that satisfy this extra condition include all finitely generated abelian groups, nilpotent groups like the Heisenberg group $H_3(\mathbb{Z})$, and solvable groups like the Baumslag-Solitar groups $BS(1,n)$. On the other hand, in Corollary \ref{cor:phase} we observe that if $\lambda_0 > \lambda_c(\Delta)$, there is no additive FPRAS unless $\mathrm{NP} = \mathrm{RP}$. In particular, we obtain that the results from Weitz, Sly, and Sun correspond to the special case when $G$ is the trivial (and orderable) group.

By an \emph{additive FPRAS}, we mean a probabilistic algorithm that given $(\Gamma,\lambda)$ and $\epsilon > 0$, outputs a number $\hat{f}$ such that $\lvert f_G(\Gamma,\lambda) - \hat{f} \rvert < \epsilon$ with probability greater than $3/4$ in time polynomial in $\lvert \Gamma / G \rvert$ and $\epsilon^{-1}$. Here, $\lvert \Gamma / G \rvert$ denotes the size of some (or any) fundamental domain of the action $G \acts \Gamma$, and therefore, all the information we need in order to construct $\Gamma$. On the other hand, by an \emph{additive FPTAS}, we mean an additive FPRAS with success probability equal to $1$ instead of just $3/4$. We assume throughout the paper that the standard functions and arithmetic operations of the numerical values involved can be computed exactly in one unit of time.

Finally, as an application in symbolic dynamics, we show how to use these results to establish representation formulas and efficient approximation algorithms for the topological entropy of nearest-neighbor subshifts of finite type with enough safe symbols. Also, we consider the pressure of single-site potentials with a vacuum state, which includes systems such as the \emph{Widom-Rowlinson model} and some other weighted graph homomorphisms from $\Gamma$ to any finite graph, among others. These results can also be regarded as an extension of the works by Marcus and Pavlov in $\mathbb{Z}^d$ (see \cite{1-marcus,2-marcus,3-marcus}), who developed additive approximation algorithms for the entropy and free energy (or pressure) of general $\mathbb{Z}^d$-subshifts of finite type, with special emphasis in the case $d=2$. We believe that these implications are relevant, especially in the light of results like the one from Hochman and Meyerovitch. In \cite{1-hochman}, Hochman and Meyerovitch proved that the set of topological entropies that a nearest-neighbor $\mathbb{Z}^2$-subshift of finite type can achieve coincides with the set of \emph{non-negative right-recursively enumerable real numbers}. This class of numbers includes numbers that are poorly computable or even not computable. In addition, we discuss the case of the monomer-dimer model and counting independent sets of line graphs, which is a special case that does not exhibit a phase transition. As a byproduct of our results, we also give sufficient conditions for the existence of a unique measure of maximal entropy for subshifts on arbitrary amenable groups.

We remark that our results---considering related ones, like those obtained by Gamarnik and Katz in \cite{1-gamarnik}---are novel in at least three aspects:
\begin{enumerate}
\item {\bf Almost transitive framework.} The generalization to the almost transitive case provides enough flexibility so that (i) other systems (such as subshifts of finite type, matchings, etc.) can be represented through reductions in terms of independent sets in suitable graphs and (ii)  the measurement of (the size of) fundamental domains as a way to measure computational complexity provides a way to obtain a computational phase transition. These aspects---to our knowledge---are new, even in the relevant case $G = \mathbb{Z}^d$, i.e., the family of graphs such that $\mathbb{Z}^d$ acts almost transitively on them.
\item {\bf Algorithms for graphs with exponential growth.} Our approach, which pro\-vides polynomial time approximation algorithms, works for ame\-na\-ble groups not only of polynomial growth but also exponential growth. A relevant case that is fully explored in Section \ref{sec8} is the family of Baumslag-Solitar groups $BS(1,n)$ for $n \geq 2$, which have exponential growth but admit even a deterministic approximation algorithm for free energy.
\item {\bf Lack of orderability.} If a group does not have an orderable subgroup of finite index it is less clear how to obtain a sequential cavity method as in \cite{1-gamarnik}, which exploits the existence of an invariant deterministic order of the group at hand (like, for example, the lexicographic order in $\mathbb{Z}^d$). Our free energy representation formulas in terms of invariant random orders provide a way to develop randomized approximation algorithms for groups that are not necessarily orderable.
\end{enumerate}

The paper is organized as follows: in Section \ref{sec2}, we introduce the basic concepts regarding graphs, homomorphisms, independent sets, group actions, Cayley graphs, and partition functions; in Section \ref{sec3}, we rigorously define free energy based on the notion of amenability and show some robustness properties of its definition; in Section \ref{sec4}, we define Gibbs measures and relevant spatial mixing properties; in Section \ref{sec5}, we develop the formalism based on trees of self-avoiding walks and discuss some of its properties; in Section \ref{sec6}, we present the formalism of invariant (deterministic and random) orders of a group; in Section \ref{sec7}, we prove Theorem \ref{thm:smb}, Theorem \ref{thm:rep1}, and Theorem \ref{thm:rep2}, which provide a randomized sequential cavity method that allows us to obtain an arboreal representation of free energy; in Section \ref{sec8}, we prove Theorem \ref{thm:rand-approx} and establish the algorithmic implications of our results; in Section \ref{sec9}, we provide reductions that allow us to translate the problem of approximating pressure of a single-site potential and the topological entropy of a subshift into the problem of counting independent sets and discuss other consequences that are implicit in our results.

\section{Preliminaries}
\label{sec2}

\subsection{Graphs}

A {\bf graph} will be a pair $\Gamma = (V,E)$ such that $V$ is a countable set---the {\bf vertices}---and $E \subseteq V \times V$ is a symmetric relation---the {\bf edges}---. Let $\leftrightarrow$ be the equivalence relation generated by $E$, i.e., $v \leftrightarrow v'$ if and only if there exist $n \in \mathbb{N}_0$ and $\{v_i\}_{0 \leq i \leq \ell}$ such that $v = v_0$, $v' = v_n$, and $(v_i,v_{i+1}) \in E$ for every $0 \leq i < n$. Denote by $n(v,v')$ the smallest $n$ with this property. This induces a notion of distance in $\Gamma$ given by 
$$
\dist_\Gamma(v,v') = 
\begin{cases}
n(v,v')	&	\text{if } v \leftrightarrow v',	\\
+\infty	&	\text{otherwise.} 
\end{cases}
$$

Given a set $U \subseteq V$, we define its {\bf boundary} $\partial U$ as the set $\{v \in V: \dist_\Gamma(v,U) = 1\}$, where $\dist_\Gamma(U,U') = \inf_{v \in U, v' \in U'} \dist_\Gamma(v,v')$. In addition, given $\ell \geq 0$ and $v \in V$, we define the {\bf ball centered at $v$ with radius $\ell$} as $B_\Gamma(v,\ell) := \{v' \in V: \dist_\Gamma(v,v') \leq \ell\}$.

A graph $\Gamma$ is
\begin{itemize}
\item {\bf loopless}, if $E$ is anti-reflexive (i.e., there is no vertex related to itself);
\item {\bf connected}, if $v \leftrightarrow v'$ for every $v,v' \in V$; and
\item {\bf locally finite}, if every vertex is related to only finitely many vertices.
\end{itemize}

Sometimes we will write $V(\Gamma)$ and $E(\Gamma)$---instead of just $V$ and $E$---to emphasize $\Gamma$.

\subsection{Homomorphisms}

Consider graphs $\Gamma_1$ and $\Gamma_2$. A {\bf graph homomorphism} is a map $g: V(\Gamma_1) \to V(\Gamma_2)$ such that
$$
(v,v') \in E(\Gamma_1) \implies (g(v), g(v')) \in E(\Gamma_2).
$$

We denote by $\Hom(\Gamma_1,\Gamma_2)$ the set of graph homomorphisms from $\Gamma_1$ to $\Gamma_2$.

A {\bf graph isomorphism} is a bijective map $g: V(\Gamma_1) \to V(\Gamma_2)$ such that
$$
(v,v') \in E(\Gamma_1) \iff (g(v), g(v')) \in E(\Gamma_2).
$$
If a map like this exists, we say that $\Gamma_1$ and $\Gamma_2$ are \emph{isomorphic}, denoted by $\Gamma_1 \cong \Gamma_2$.

A {\bf graph automorphism} is a graph isomorphism from a graph $\Gamma$ to itself. We denote by $\Aut(\Gamma)$ the set of graph automorphisms of $\Gamma$. This set is a group when considering composition $\circ$ as the group operation and the identity map $\mathrm{id}_\Gamma: V \to V$ as the identity group element $1_{\Aut(\Gamma)}$. In this case, instead of writing $g_1 \circ g_2$, we will simply write $g_1g_2$ to emphasize the group structure.

\subsection{Independent sets}

Given a subset $U \subseteq V$, the {\bf induced subgraph} by $U$, denoted $\Gamma[U]$, is the graph with set of vertices $U$ and set of edges $E \cap (U \times U)$. A subset $I \subseteq V$ is called an {\bf independent set} if $\Gamma[I]$ has no edges. We can also represent an independent set by its indicator function, i.e., by the map $x: V \to \{0,1\}$ so that
$$
\left[x(v) = 1 \text{ and } (v,v') \in E \right] \implies x(v') = 0.
$$
In addition, if we consider the finite graph $H_0 := (\{0,1\},\{(0,0),(0,1),(1,0)\})$, then $x$ can be also understood as a graph homomorphism from $\Gamma$ to $H_0$ (see Figure \ref{fig:hard}).

\begin{figure}[ht]
\centering
\includegraphics[scale = 0.8]{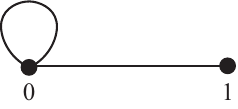}
\caption{The graph $H_0$}
\label{fig:hard}
\end{figure}

We denote by $X(\Gamma)$ the {\bf set of independent sets} of $\Gamma$. Notice that $X(\Gamma) \subseteq \{0,1\}^{V}$ can be identified with the set $\Hom(\Gamma,H_0)$ and that the \emph{empty independent set} $0^{V}$ always belongs to $X(\Gamma)$. Sometimes we will denote this independent set by $0^\Gamma$.

\subsection{Group actions}

Let $G$ be a subgroup of $\Aut(\Gamma)$. Given $g \in G$ and $v \in V$, the map $(g,v) \mapsto g \cdot v := g(v)$ is a {\bf (left) group action}, this is to say, $1_G \cdot v = v$ and $(gg') \cdot v = g \cdot (g' \cdot v)$ for all $g' \in G$, where $1_G = 1_{\Aut(\Gamma)}$. In this case, we say that \emph{$G$ acts on $\Gamma$} and denote this fact by $G \acts \Gamma$.

The group $G$ also acts on $\{0,1\}^{V}$ by precomposition. Given $g \in G$ and $x \in \{0,1\}^{V}$, consider the map $(g,x) \mapsto g \cdot x := x \circ g^{-1}$. A subset $X \subseteq \{0,1\}^{V}$ is called {\bf $G$-invariant} if $g \cdot X = X$ for all $g \in G$, where $g \cdot X := \{g \cdot x: x \in X\}$. Clearly, if $x \in X(\Gamma)$, then $g \cdot x$ and $g^{-1} \cdot x$ also belong to $X(\Gamma)$, since $g \in \Aut(\Gamma)$ and $x \in \Hom(\Gamma,H_0)$. Therefore, $X(\Gamma)$ is $G$-invariant and the action $G \acts X(\Gamma)$ is well defined.

We will usually use the letter $v$ to denote vertices in $V$, the letter $g$ to denote graph automorphisms in $G$, and the letter $x$ to denote independent sets in $X(\Gamma)$. In order to distinguish the action of $G$ on $V$ from the action of $G$ on $X(\Gamma)$, we will write $g v$ instead of $g \cdot v$, without risk of ambiguity.

The action $G \acts \Gamma$ is always {\bf faithful}, i.e., for all $g \in G \setminus \{1_G\}$, there exists $v \in V$ such that $g v \neq v$. The {\bf $G$-orbit} of a vertex $v \in V$ is the set $Gv := \{g v: g \in G\}$. The set of all $G$-orbits of $\Gamma$, denoted by $\Gamma/G$, is a partition of $V$ and it is called the {\bf quotient} of the action.

We say that a subset $\emptyset \neq U \subseteq V$ is {\bf dynamically generating} if $GU = V$, where $GF := \{g v: g \in F, v \in U\}$ for any $F \subseteq G$, and a {\bf fundamental domain} if it is also minimal, i.e., if $U' \subsetneq U$, then $GU' \subsetneq V$. The action $G \acts \Gamma$ is {\bf almost transitive} if $\lvert \Gamma/G \rvert < +\infty$ and {\bf transitive} if $\lvert \Gamma/G \rvert = 1$. A graph $\Gamma$ is called \emph{almost transitive} (resp. \emph{transitive}) if $\Aut(\Gamma) \acts \Gamma$ is almost transitive (resp. transitive).

The {\bf index} of a subgroup $H \leq G$, denoted by $[G:H]$, is the cardinality of the set of cosets $\{Hg: g \in G\}$. We will usually consider subgroups of finite index. In this case, we have that $\lvert \Gamma/H \rvert = \lvert \Gamma/G \rvert[G:H]$.

The {\bf $G$-stabilizer} of a vertex $v \in V$ is the subgroup $\Stab_G(v) := \{g \in G: g v = v\}$. Notice that, since $\Stab_G(gv) = g\Stab_G(v)g^{-1}$ for every $g \in G$, we have that $\lvert \Stab_G(v) \rvert = \lvert \Stab_G(v') \rvert$ for all $v' \in Gv$. If $\lvert \Stab_G(v) \rvert < \infty$ for all $v$, we say that the action is {\bf almost free}, and if $\lvert \Stab_G(v) \rvert = 1$ (i.e., if $\Stab_G(v) = \{1_G\}$) for all $v,$ we say that the action is {\bf free}.

A relevant observation is that if we assume that $\Gamma$ is countable and $G \acts \Gamma$ is almost transitive and almost free, then $G$ must be a countable group. In this work, we will only consider almost free and almost transitive actions. In this case, there exists a finite fundamental domain $\Rep \subseteq V$ such that $\lvert U_0 \rvert = \lvert \Gamma/G \rvert$ and, if $\Gamma$ is locally finite, then $\Gamma$ must have {\bf bounded degree}, i.e., there is a uniform bound on the number of vertices that each vertex is related to. In this case, we denote by $\Delta(\Gamma)$ the maximum degree among all vertices of the graph $\Gamma$.

\subsection{Transitive case: Cayley graphs}

Consider a subset $S \subseteq G$ that we assume to be \emph{symmetric}, i.e., $S = S^{-1}$, where $S^{-1} = \{s^{-1} \in S: s \in S\}$. We define the {\bf (right) Cayley graph} as $\Cay(G,S) = (V,E)$, where
$$
V = G	\quad	\text{ and }	\quad	E = \{(g,sg): g \in G, s \in S\}.
$$

Cayley graphs are a natural construction used to represent groups in a geometric fashion. In this context, it is common to ask that $1_G \notin S$, $S$ to be finite, and $S$ to be \emph{generating}, i.e., $G = \left<S\right>$, where
$$
\left<S\right> := \left\{s_1 \cdots s_k: s_i \in S \text{ for all } 1 \leq i \leq k \text{ and } k \in \mathbb{N}\right\}.
$$

Groups that have a set $S$ satisfying these conditions are called {\bf finitely generated}. Notice that if $1_G \notin S$, then $\Cay(G,S)$ is loopless, if $S$ is finite, then $\Cay(G,S)$ has bounded degree and, if $S$ is generating, then $\Cay(G,S)$ is connected. Now, suppose that $G \acts \Gamma$ is transitive (and free). Then, there exists a symmetric set $S \subseteq G$ such that
$$
\Gamma \cong \Cay(G,S).
$$

Indeed, it suffices to take $S = \{g \in G: (v,g v) \in E\}$, where $v \in V$ is arbitrary (see \cite{1-sabidussi}). 

We will be interested in Cayley graphs $\Gamma = \Cay(G,S)$ and their subgroup of automorphisms induced by group multiplication as a special and relevant case: given $g \in G$, we can define $f_g: \Gamma \to \Gamma$ as $f_g(g') = g'g$ and it is easy to check that $f_g \in \Aut(\Gamma)$. Then $G$ acts (as a group, from the left) on $\Gamma$ so that $g \cdot g' = f_{g^{-1}}(g') = g'g^{-1}$ for all $g' \in G$ and $G \hookrightarrow \Aut(\Gamma)$ by identifying $g$ with $f_{g^{-1}}$. In addition, via this identification, $G$ acts transitively on $\Gamma$ as a subgroup of graph automorphisms. In particular, every Cayley graph is transitive.

\subsection{Partition functions}

Given a graph $\Gamma = (V,E)$, let's consider $\lambda: V \to \mathbb{R}_{>0}$, an {\bf activity function}. We call the pair $(\Gamma,\lambda)$ a {\bf hardcore model}. We will say that a hardcore model $(\Gamma,\lambda)$ is \emph{finite} if $\Gamma$ is finite. If $U \subseteq V$ is a finite subset, a fact that we denote by $U \Subset V$, and $x \in X(\Gamma)$ is an independent set, we define the {\bf $\lambda$-weight} of $x$ on $U$ as
$$
\weight_\lambda(x,U) := \prod_{v \in U} \lambda(v)^{x(v)}
$$
and the {\bf $(\Gamma,U,\lambda)$-partition function} as
$$
Z_\Gamma(U,\lambda) := \sum_{x \in X(\Gamma,U)} \weight_\lambda(x,U) =  \sum_{x \in X(\Gamma,U)} \prod_{v \in U} \lambda(v)^{x(v)},
$$
where $X(\Gamma,U) := \{x \in X(\Gamma): x(v) = 0 \text{ for all } v \notin U\}$ is the finite set corresponding to the subset of independent sets of $\Gamma$ \emph{supported} on $U$. It is easy to check that there is an identification between $X(\Gamma,U)$ and $X(\Gamma[U])$. Then, the quantity $Z_\Gamma(U,\lambda)$ corresponds to the summation of independent sets of $\Gamma[U]$ weighted by $\lambda$. In the special case $\lambda \equiv 1$, we have that $Z_\Gamma(U,1) = \lvert X(\Gamma,U) \rvert = \lvert X(\Gamma[U]) \rvert$, i.e., the partition function is exactly the number of independent sets supported on $U$. If $(\Gamma,\lambda)$ is finite, we will simply write $Z_\Gamma(\lambda)$ instead of $Z_\Gamma(V,\lambda)$.

\begin{remark}
Notice that if $(v,v) \in E$ or $\lambda(v) = 0$, then $Z_\Gamma(U,\lambda) = Z_{\Gamma}(U \setminus \{v\},\lambda)$; due to this fact, we usually ask $\lambda$ to be strictly positive and that $\Gamma$ is loopless.
\end{remark}

\section{Free energy}
\label{sec3}

Now, suppose that we have an increasing sequence $\{U_n\}_n$ of finite subsets of vertices exhausting $\Gamma$, i.e., $U_n \subseteq U_{n+1}$ and $\bigcup_n U_n = V$. Tentatively, we would like to define the exponential growth rate of $Z_\Gamma(V_n,\lambda)$ as
$$
\lim_n \frac{\log Z_\Gamma(U_n, \lambda)}{\lvert U_n \rvert}.
$$

In order to guarantee the existence of this limit, we will provide a self-contained argument based on the particular properties of the hardcore model and \emph{amenability}. The reader that is familiar with this kind of arguments may skim over the next part and go then to Section \ref{sec4}.

\subsection{Amenability}

Let
$$
\F(G) := \{F \subseteq G: 0 < \lvert F \rvert < \infty\}
$$
be the set of finite nonempty subsets of $G$. Given $g \in G$ and $K,F \subseteq G$, we denote $Fg = \{hg: h \in F\}$, $gF = \{g h: h \in F\}$, $F^{-1} := \{g^{-1}: g \in F\}$, and $KF = \{hg: h \in K, g \in F\}$.

We say that $\{F_n\}_n \subseteq \F(G)$ is a {\bf right F{\o}lner sequence} if
$$
\lim_n \frac{\lvert F_ng \triangle F_n \rvert}{\lvert F_n \rvert} = 0	\quad	\text{for all } g \in G,
$$
where $\triangle$ denotes the symmetric difference. Similarly, $\{F_n\}_n$ is a {\bf left F{\o}lner sequence} if
$$
\lim_n \frac{\lvert g F_n \triangle F_n \rvert}{\lvert F_n \rvert} = 0	\quad	\text{for all } g \in G,
$$
and $\{F_n\}_n$ is a {\bf two-sided F{\o}lner sequence} if it is both a left and a right F{\o}lner sequence. The group $G$ is said to be {\bf amenable} if it has a (\emph{right} or \emph{left}) \emph{F{\o}lner sequence}. Notice that $\{F_n\}_n$ is left F{\o}lner if and only if $\{F_n^{-1}\}_n$ is right F{\o}lner. A F{\o}lner sequence $\{F_n\}_n$ is a {\bf F{\o}lner exhaustion} if in addition $F_n \subseteq F_{n+1}$ and $\bigcup_n F_n = G$. Every countable amenable group has a two-sided F{\o}lner exhaustion (see \cite[Theorem 4.10]{1-kerr}).

Every virtually amenable group is amenable. Moreover, the class of amenable groups contains all finite and all abelian groups, and it is closed under the operations of taking subgroups, and forming quotients, extensions, and directed unions (see \cite{1-ceccherini}).

\subsection{Growth rate of independent sets}

Given $\emptyset \neq U \Subset V$, define $\varphi_U: \F(G) \to \mathbb{R}$ as
$$
\varphi_{U}(F) := \log Z_\Gamma(F U, \lambda).
$$

From now on, we will assume that $\lambda: V \to \mathbb{R}_{>0}$ is $G$-invariant, this is to say,
$$
\lambda(g v) = \lambda(v)	\quad	\text{for all } g \in G.
$$

In other words, $\lambda$ is constant along the $G$-orbits, so it achieves at most $\lvert \Gamma/G \rvert$ different values. We denote by $\lambda_+$ and $\lambda_-$ the maximum and minimum among these values, respectively.

Now, let $W$ be an abstract set, $M$ a finite subset of $W$, and $k \in \mathbb{N}$. We will say that a finite collection $\mathcal{K}$ of nonempty finite subsets of $W$, with possible repetitions, is a {\bf $k$-cover} of $M$ if $\sum_{K \in \K} \mathbbm{1}_K \geq k  \mathbbm{1}_M$, where $\mathbbm{1}_A: W \to \{0,1\}$ denotes the indicator function of a set $A \subseteq W$. The following lemma is due to Downarowicz, Frej, and Romagnoli.

\begin{lemma}[{\cite{1-downarowicz}}]
\label{lem:downarowicz}
Let $Y$ be a subset of $A^{n}$, where $A$ is a finite set and $n \in \mathbb{N}$. Let $\mathcal{K}$ be a $k$-cover of the set of coordinates $M = \{1,\dots, n\}$. For $K \in \mathcal{K}$, let $Y_{K} = \{y_K: y \in Y\}$, where $y_K$ is the restriction of $y$ to $K$. Then,
$$
\lvert Y \rvert \leq \prod_{K \in \mathcal{K}}\lvert Y_{K}\rvert^{\frac{1}{k}}.
$$
\end{lemma}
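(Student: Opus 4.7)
The plan is to deduce the inequality from Shearer's entropy inequality applied to the uniform distribution on $Y$. Let $\mathbf{y} = (\mathbf{y}_1, \dots, \mathbf{y}_n)$ be drawn uniformly at random from $Y$, so that the Shannon entropy satisfies $H(\mathbf{y}) = \log |Y|$. For each $K \in \mathcal{K}$, the marginal $\mathbf{y}_K$ takes values in $Y_K$, and the maximum-entropy bound gives $H(\mathbf{y}_K) \leq \log |Y_K|$. Taking logarithms of the target inequality, it therefore suffices to establish the entropy estimate
$$
k \cdot H(\mathbf{y}) \leq \sum_{K \in \mathcal{K}} H(\mathbf{y}_K),
$$
after which exponentiating and taking the $k$-th root yields the claim.

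To prove this entropy inequality, I would apply the chain rule to each summand on the right: enumerating the elements of $K$ as $i_1 < i_2 < \cdots < i_{|K|}$, one writes
$$
H(\mathbf{y}_K) = \sum_{j=1}^{|K|} H(\mathbf{y}_{i_j} \mid \mathbf{y}_{i_1}, \dots, \mathbf{y}_{i_{j-1}}) \geq \sum_{j=1}^{|K|} H(\mathbf{y}_{i_j} \mid \mathbf{y}_1, \dots, \mathbf{y}_{i_j - 1}),
$$
where the inequality uses that conditioning on more variables can only decrease entropy. Summing over $K \in \mathcal{K}$ and interchanging the order of summation, the $k$-cover hypothesis $\sum_{K \in \mathcal{K}} \mathbbm{1}_K \geq k\,\mathbbm{1}_M$ guarantees that each coordinate $i \in M$ contributes at least $k$ copies of $H(\mathbf{y}_i \mid \mathbf{y}_1, \dots, \mathbf{y}_{i-1})$; applying the chain rule once more, the resulting lower bound equals $k \cdot H(\mathbf{y})$, as required.

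The main technical subtlety is the monotonicity-in-conditioning step used to enlarge the conditioning set from $\{i_{1},\dots,i_{j-1}\}$ to $\{1,\dots,i_j-1\}$, which requires fixing a global total order on $M$ before running the argument; this is standard but has to be performed uniformly across all $K \in \mathcal{K}$ so that the double sum telescopes correctly against $H(\mathbf{y}) = \sum_i H(\mathbf{y}_i \mid \mathbf{y}_1, \dots, \mathbf{y}_{i-1})$. A purely combinatorial proof by induction on $n$ or on $|\mathcal{K}|$ is also available, but the entropy-theoretic argument above is the cleanest, is the one used in \cite{1-downarowicz}, and conceptually foreshadows the Kieffer-Pinsker-type representations exploited later in the paper.
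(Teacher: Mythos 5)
Your argument is correct: this is the standard entropy proof of Shearer's inequality, and every step checks out — the chain rule expansion, the monotonicity of conditional entropy under enlarging the conditioning set (valid because $\{i_1,\dots,i_{j-1}\}\subseteq\{1,\dots,i_j-1\}$ when the elements of $K$ are enumerated in increasing order), and the final double-counting step, which uses the nonnegativity of the conditional entropies $H(\mathbf{y}_i\mid\mathbf{y}_1,\dots,\mathbf{y}_{i-1})$ so that a multiplicity of at least $k$ yields a factor of at least $k$. Note that the paper itself states this lemma with a citation to Downarowicz--Frej--Romagnoli and gives no proof, so there is no in-paper argument to compare against; your entropy-theoretic derivation is the standard one and is a perfectly valid way to fill in the citation (the cited source also offers a purely combinatorial treatment, so you may want to soften the claim about which proof appears there).
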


Given $\varphi: \F(G) \to \mathbb{R}$, we will say that $\varphi$ satisfies {\bf Shearer's inequality} if
$$
\varphi(F) \leq \frac{1}{k} \sum_{K \in \K} \varphi(K)
$$ 
for all $F \in \F(G)$ and for all $k$-cover $\K$ of $F$ with $K \subseteq F$ for all $K \in \K$. We have the following theorem.

\begin{theorem}[{\cite[Theorem 4.48]{1-kerr}}]
\label{kerr-li}
Given a countable amenable group $G$, suppose that $\varphi: \F(G) \to \mathbb{R}$ satisfies Shearer's inequality and $\varphi(Fg) = \varphi(F)$ for all $F \in \F(G)$ and $g \in G$. Then,
$$
\lim_n \frac{\varphi(F_n)}{\lvert F_n \rvert} = \inf_{F \in \F(G)} \frac{\varphi(F)}{\lvert F \rvert}
$$
for any F{\o}lner sequence $\{F_n\}_n$.
\end{theorem}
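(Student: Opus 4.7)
The plan is to establish the upper bound $\limsup_n \varphi(F_n)/|F_n| \leq \inf_F \varphi(F)/|F|$ via a covering argument combining Shearer's inequality with the right-invariance of $\varphi$ and the Følner property of $\{F_n\}_n$; the reverse inequality $\liminf_n \varphi(F_n)/|F_n| \geq \inf_F \varphi(F)/|F|$ is immediate from the definition of infimum, so all the content lies in the upper bound. Fix $\epsilon > 0$ and a witness $F \in \F(G)$ with $\varphi(F)/|F| < \inf_{F'} \varphi(F')/|F'| + \epsilon$. I would then introduce the ``$F$-interior'' $F_n^\circ := \{g \in F_n : Fg \subseteq F_n\}$ and observe that $|F_n \setminus F_n^\circ| \leq \sum_{f \in F} |F_n \setminus f^{-1}F_n|$, so that the Følner property forces $|F_n^\circ|/|F_n| \to 1$ (the right-Følner case is reduced to the left one by the symmetry $\varphi'(F) := \varphi(F^{-1})$ applied to $\{F_n^{-1}\}_n$, or by directly invoking the two-sided Følner exhaustion guaranteed on every countable amenable group).

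Next, I would construct an $|F|$-cover $\mathcal{K}$ of $F_n$ by first collecting the translates $\{Fg : g \in F_n^\circ\}$, each of which is a subset of $F_n$ by definition of $F_n^\circ$, and then padding: for every $h \in F_n$ whose multiplicity $m(h) := |\{g \in F_n^\circ : h \in Fg\}|$ under the translates is strictly less than $|F|$, add $|F| - m(h)$ copies of the singleton $\{h\}$. A straightforward double-count gives $\sum_{h \in F_n} m(h) = |F|\cdot|F_n^\circ|$, so the total number of padding singletons equals $|F|(|F_n|-|F_n^\circ|)$ and every vertex ends up with multiplicity exactly $|F|$, confirming that $\mathcal{K}$ is indeed an $|F|$-cover of $F_n$ by nonempty finite subsets.

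Applying Shearer's inequality to $\mathcal{K}$ and invoking $\varphi(Fg) = \varphi(F)$ together with the constant value $C := \varphi(\{1_G\})$ for singletons (also a consequence of right-invariance), I obtain
\[
\varphi(F_n) \leq \frac{1}{|F|}\Bigl[\,|F_n^\circ|\,\varphi(F) + C\cdot|F|\,(|F_n|-|F_n^\circ|)\Bigr] = \frac{|F_n^\circ|}{|F|}\,\varphi(F) + C\,(|F_n|-|F_n^\circ|).
\]
Dividing by $|F_n|$ and sending $n \to \infty$, the boundary term vanishes by the Følner property, giving $\limsup_n \varphi(F_n)/|F_n| \leq \varphi(F)/|F| < \inf + \epsilon$; taking $\epsilon \to 0$ completes the proof. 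The main delicate point is matching the direction of the Følner condition with that of the right-invariance when defining $F_n^\circ$, which is handled by the symmetry remark above; all remaining steps are a routine double-count plus a single application of Shearer.
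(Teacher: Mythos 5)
Your proof is correct in its substance, and it is worth noting that the paper gives no proof of this statement at all: it is quoted from Kerr--Li, and the argument there (going back to Downarowicz--Frej--Romagnoli) is exactly the one you reconstruct --- cover $F_n$ by the right translates $Fg$ with $g$ in the $F$-interior $F_n^\circ$, pad with singletons to an exact $|F|$-cover, apply Shearer's inequality, and kill the boundary term using $|F_n^\circ|/|F_n|\to 1$ together with $\varphi(\{h\})=\varphi(\{1_G\})$. The double count $\sum_{h\in F_n}m(h)=|F|\,|F_n^\circ|$ and the bound $m(h)=|F^{-1}h\cap F_n^\circ|\le |F|$ are both right, the padded collection really is an $|F|$-cover by nonempty subsets of $F_n$ in the sense required by the paper's definition, and no positivity of $\varphi$ is needed since the interior proportion tends to $1$. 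So there is nothing in the paper to compare against; your write-up is a legitimate self-contained proof of the cited black box, along the same lines as the source.

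The only point I would push back on is the left/right remark at the end, which you correctly identify as the delicate step but do not fully resolve. Your interior estimate $|F_n\setminus F_n^\circ|\le\sum_{f\in F}|F_n\setminus f^{-1}F_n|$ requires the \emph{left} F{\o}lner condition, while the hypothesis $\varphi(Fg)=\varphi(F)$ is \emph{right} invariance; this matched pair (right-invariant $\varphi$, left F{\o}lner $\{F_n\}$) is exactly the combination Kerr--Li's Theorem 4.48 covers. Your symmetrization $\varphi'(A):=\varphi(A^{-1})$ applied to $\{F_n^{-1}\}$ turns (right-invariant, right F{\o}lner) into (left-invariant, left F{\o}lner), i.e.\ into the mirror image of the \emph{mismatched} case rather than of the solved one, so it does not dispose of a sequence that is only right F{\o}lner; and the existence of a two-sided F{\o}lner exhaustion somewhere in $G$ says nothing about the particular sequence $\{F_n\}$ appearing in the statement. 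This is largely an ambiguity inherited from the statement itself (the paper defines both conventions and then says ``any F{\o}lner sequence''), and the matched case is all that is ever used downstream, but as written your reduction for the mismatched case does not go through and should either be repaired or the statement restricted to the matching convention.
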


Considering the two previous results, we obtain the next lemma.

\begin{lemma}
\label{lem:rational}
Given a fundamental domain $U_0$ of $G \acts \Gamma$ and $\lambda: V \to \mathbb{Q}_{> 0}$ such that $\lambda(v) = \frac{p_v}{q_v}$ with $p_v, q_v \in \mathbb{N}$ for all $v \in V$, we have that, for any F{\o}lner sequence $\{F_n\}_n$,
$$
\lim_n \frac{\varphi(F_n)}{\lvert F_n \rvert} = \inf_{F \in \F(G)} \frac{\varphi(F)}{\lvert F \rvert},
$$
where $\varphi: \F(G) \to \mathbb{R}$ is given by $\varphi(F) = \log Z_\Gamma(FU_0, \lambda) + \lvert F \rvert \sum_{v \in U_0} \log q_v$.
\end{lemma}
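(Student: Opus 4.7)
The plan is to apply Theorem \ref{kerr-li} to $\varphi$, so I need to verify that $\varphi$ satisfies Shearer's inequality and that it is $G$-invariant in the sense required by that theorem. The main technical point---and what I expect to be the main obstacle---is recasting the weighted partition function $Z_\Gamma(U,\lambda)$ as a genuine cardinality so that Lemma \ref{lem:downarowicz} can be applied; this is precisely where the hypothesis that $\lambda$ take \emph{rational} values becomes essential, since it allows $p_v$ and $q_v$ to be interpreted as counts of auxiliary states.

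For Shearer's inequality, my key technical step is an extended-alphabet encoding. For each vertex $v$, I introduce an auxiliary alphabet consisting of $q_v$ ``empty sub-states'' and $p_v$ ``occupied sub-states''. Writing $\tilde{Y}_U$ for the set of maps assigning to each $v\in U$ an element of this auxiliary alphabet whose occupied positions form an independent set in $\Gamma$, a direct computation using the identity $\lambda(v)^{x(v)}\,q_v = q_v^{1-x(v)}p_v^{x(v)}$ yields
$$
|\tilde{Y}_U| \;=\; Z_\Gamma(U,\lambda)\prod_{v\in U}q_v.
$$
Given $F\in\F(G)$ and a $k$-cover $\{K_i\}$ of $F$, freeness of the action $G\acts\Gamma$ together with the fundamental-domain property of $U_0$ ensures that each $v\in FU_0$ has a unique decomposition $v=hu$ with $h\in F, u\in U_0$, so $\{K_iU_0\}$ is a $k$-cover of $FU_0\subseteq V$. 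Since restrictions of valid extended configurations remain valid, Lemma \ref{lem:downarowicz} applied to the finite set $\tilde{Y}_{FU_0}$ and this $k$-cover yields $|\tilde{Y}_{FU_0}|\leq\prod_i|\tilde{Y}_{K_iU_0}|^{1/k}$. Taking logarithms and using the $G$-invariance of $v\mapsto q_v$ (so that $\sum_{v\in FU_0}\log q_v = |F|\sum_{v\in U_0}\log q_v$, and analogously for each $K_i$), I obtain Shearer's inequality for $\varphi(F)=\log Z_\Gamma(FU_0,\lambda)+|F|\sum_{v\in U_0}\log q_v$, which is the natural reading of the correction term $|F|\sum_{v\in U_0}q_v$ in the statement.

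The invariance is comparatively easy: each $g\in G$ acts as a graph automorphism and $\lambda$ is $G$-invariant, so $g$ restricts to a weight-preserving bijection between $X(\Gamma,FU_0)$ and $X(\Gamma,gFU_0)$, and combined with $|gF|=|F|$ (freeness) this yields the left-invariance $\varphi(gF)=\varphi(F)$. The right-invariant formulation $\varphi(Fg)=\varphi(F)$ appearing in Theorem \ref{kerr-li} is equivalent for the purpose of the limit calculation: the Ornstein--Weiss argument underlying it is symmetric under $F\mapsto F^{-1}$, and working along a two-sided F{\o}lner exhaustion---whose existence on every countable amenable group is guaranteed by \cite[Theorem 4.10]{1-kerr}---makes the distinction immaterial. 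With both hypotheses in place, Theorem \ref{kerr-li} directly yields the claimed limit and its identification with the infimum.
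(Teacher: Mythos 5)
Your argument is correct and, for its core, identical to the paper's: the same integer-alphabet encoding realizing $Z_\Gamma(U,\lambda)\prod_{v\in U}q_v$ as a cardinality (the paper uses $q_v$ negative and $p_v$ positive sub-states), the same observation that freeness makes $\{KU_0\}_{K\in\K}$ a $k$-cover of $FU_0$, and the same appeal to Lemma \ref{lem:downarowicz} followed by Theorem \ref{kerr-li}. Two of your deviations are worth recording, and both are points where you are more careful than the source. First, the Shearer computation indeed produces the correction term $|F|\sum_{v\in U_0}\log q_v$ rather than $|F|\sum_{v\in U_0}q_v$ as printed; the discrepancy is harmless (adding $|F|c$ with $c\geq 0$ preserves Shearer's inequality because $\sum_{K\in\K}|K|\geq k|F|$, and the term cancels in Proposition \ref{prop:energy} anyway), but your reading is the one the derivation actually supports. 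Second, the paper simply asserts the right-invariance $\varphi(Fg)=\varphi(F)$, whereas only the left-invariance $\varphi(gF)=\varphi(F)$ follows from $g\in\Aut(\Gamma)$ and the $G$-invariance of $\lambda$: one has $(gF)U_0=g(FU_0)$, but $(Fg)U_0=F(gU_0)$ is a union of translates of a \emph{different} fundamental domain and its partition function can genuinely differ for non-abelian $G$. Your instinct to prove left-invariance and then pass to the right-invariant hypothesis of Theorem \ref{kerr-li} is therefore the correct fix; to make it rigorous without reopening the Ornstein--Weiss argument, apply the theorem to $\psi(F):=\varphi(F^{-1})$, which is right-invariant and inherits Shearer's inequality because a $k$-cover of $F$ inverts to a $k$-cover of $F^{-1}$, and evaluate along $\{F_n^{-1}\}$ for a two-sided F{\o}lner sequence; this yields the stated limit for all such sequences, which is all that is used later.
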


\begin{proof}
Given $F \in \F(G)$ and $k \in \mathbb{N}$, let $\K$ be a $k$-cover of $F$ with $K \subseteq F$ for all $K \in \K$. Notice that
\begin{align*}
Z_\Gamma(FU_0,\lambda)	&	=	\sum_{x \in X(\Gamma, FU_0)}  \prod_{v \in FU_0} \left(\frac{p_v}{q_v}\right)^{x(v)}						\\
						&	=	\frac{1}{\prod_{v \in FU_0} q_v} \sum_{x \in X(\Gamma, FU_0)}  \prod_{v \in FU_0} p_v^{x(v)} q_v^{1-x(v)}.
\end{align*}

Consider $q := \max_v q_v$, $p := \max_v p_v$, $A := \{-q,\dots,-1\} \cup \{1,\dots,p\}$, and
$$
Y := \left\{y \in A^{FU_0}: -q_v \leq y(v) \leq p_v \text{ and } \left[y(v) \geq 1 \land (v,v') \in E(\Gamma)\right] \implies y(v') \leq -1\right\}.
$$

Notice that
$$
\lvert Y \rvert = \sum_{x \in X(\Gamma,FU_0)}  \prod_{v \in FU_0} p_v^{x(v)} q_v^{1-x(v)}.
$$

Therefore, by Lemma \ref{lem:downarowicz}, and noticing that $\lvert Y_{KU_0} \rvert = \prod_{v \in KU_0} q_v \cdot Z_\Gamma(KU_0,\lambda)$, we have that
$$
\prod_{v \in FU_0} q_v \cdot Z_\Gamma(FU_0,\lambda) = \lvert Y \rvert \leq \prod_{K \in \mathcal{K}}\lvert Y_{KU_0} \rvert^{\frac{1}{k}} \leq \prod_{K \in \mathcal{K}} \left(\prod_{v \in KU_0} q_v \cdot Z_\Gamma(KU_0,\lambda)\right)^{\frac{1}{k}},
$$
where we use that $\{KU_0: K \in \K\}$ is a $k$-cover of $FU_0$. Therefore, by $G$-invariance of $\lambda$,
\begin{align*}
\varphi(F)	&	= 	\log Z_\Gamma(FU_0, \lambda) + \lvert F \rvert \sum_{v \in U_0} \log q_v	\\
		&	\leq	\frac{1}{k} \sum_{K \in \mathcal{K}}\left( \log Z_\Gamma(KU_0,\lambda) + \lvert K \rvert\sum_{v \in U_0} \log q_v \right)	\\
		&	=	\frac{1}{k} \sum_{K \in \mathcal{K}} \varphi(K),
\end{align*}
so $\varphi$ satisfies Shearer's inequality. On the other hand, by $G$-invariance of $X(\Gamma)$ and $\lambda$, it follows that $\varphi(Fg) = \varphi(F)$ for all $F \in \F(G)$ and $g \in G$. Therefore, by Theorem \ref{kerr-li}, we conclude.
\end{proof}

\begin{proposition}
\label{prop:energy}
Given a fundamental domain $U_0$ of $G \acts \Gamma$, we have that
$$
\lim_n \frac{\log Z_\Gamma(F_nU_0, \lambda)}{\lvert F_n \rvert} = \inf_{F \in \F(G)} \frac{\log Z_\Gamma(FU_0, \lambda)}{\lvert F \rvert},
$$
for any F{\o}lner sequence $\{F_n\}_n$.
\end{proposition}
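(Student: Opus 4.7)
The plan is to reduce the statement to the rational-activity case handled by \lemref{lem:rational} via a continuity argument in $\lambda$, exploiting that $G$-invariance forces $\lambda$ to take only the finitely many values $\{\lambda(v)\}_{v \in U_0}$ and that freeness of $G \acts \Gamma$ yields the identity $|FU_0| = |F||U_0|$ for every $F \in \F(G)$.

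For rational $\lambda(v) = p_v/q_v$, \lemref{lem:rational} gives
$$
\lim_n \frac{\varphi(F_n)}{|F_n|} \;=\; \inf_{F \in \F(G)} \frac{\varphi(F)}{|F|},
$$
where $\varphi(F) = \log Z_\Gamma(FU_0,\lambda) + |F| C$ for a constant $C$ depending only on $\{q_v\}_{v \in U_0}$. The additive $|F| C$ contributes the same constant $C$ to both $\varphi(F_n)/|F_n|$ and to $\varphi(F)/|F|$, so subtracting it from both sides immediately yields Proposition~\ref{prop:energy} in the rational case.

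For a general $G$-invariant $\lambda:V\to\R_{>0}$, I would sandwich $\lambda$ between $G$-invariant rational activities $\lambda_k^-\leq \lambda\leq \lambda_k^+$ with $\lambda_k^+(v)/\lambda_k^-(v)\leq 1+\epsilon_k$ for every $v\in V$ and $\epsilon_k\downarrow 0$. Every monomial of $Z_\Gamma(FU_0,\cdot)$ is monotone nondecreasing in each $\lambda(v)$ and is multiplied by at most $(1+\epsilon_k)^{|FU_0|}$ when passing from $\lambda_k^-$ to $\lambda_k^+$, so
$$
\log Z_\Gamma(FU_0,\lambda_k^-) \;\leq\; \log Z_\Gamma(FU_0,\lambda) \;\leq\; \log Z_\Gamma(FU_0,\lambda_k^-) + |F||U_0|\log(1+\epsilon_k).
$$
Dividing by $|F|$ produces a sandwich that is uniform in $F$, with error $|U_0|\log(1+\epsilon_k)\to 0$. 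Applying the rational case to each $\lambda_k^-$ and letting $k\to\infty$ then simultaneously controls the liminf/limsup of $\log Z_\Gamma(F_nU_0,\lambda)/|F_n|$ and the infimum $\inf_F \log Z_\Gamma(FU_0,\lambda)/|F|$, forcing both to exist and to coincide.

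The main, and essentially only, technical point is ensuring that the rational-to-real sandwich is genuinely uniform in $F$; this is exactly where freeness of the action is used, since it makes the ratio $|FU_0|/|F|=|U_0|$ bounded independently of $F$, so that the multiplicative distortion from replacing $\lambda_k^-$ by $\lambda_k^+$ translates into a vanishing additive error in the normalized log-partition function.
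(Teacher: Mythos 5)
Your proposal is correct and follows essentially the same route as the paper: first the rational case via \lemref{lem:rational} with cancellation of the additive constant, then a $G$-invariant rational sandwich $\tilde\lambda \leq \lambda \leq (1+\epsilon)\tilde\lambda$ whose multiplicative distortion becomes the vanishing additive error $|U_0|\log(1+\epsilon)$ after normalizing by $|F|$, using $|FU_0|=|F||U_0|$ from freeness. The concluding liminf/limsup squeeze is exactly the paper's argument.
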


\begin{proof}
First, suppose that $\lambda$ only takes rational values, i.e., $\lambda: V \to \mathbb{Q}_{>0}$ so that $\lambda(v) = \frac{p_v}{q_v}$ for all $v \in V$. By Lemma \ref{lem:rational}, for $\varphi(F) = \log Z_\Gamma(FU_0, \lambda) + \lvert F \rvert \sum_{v \in U_0} q_v$, we have that
\begin{align*}
\lim_{n} \frac{\log Z_\Gamma(F_nU_0, \lambda)}{\lvert F_n \rvert} + \sum_{v \in U_0} q_v	&	=	\lim_{n} \frac{\varphi(F_n)}{\lvert F_n \rvert}	\\
															&	= 	\inf_{F \in \F(G)} \frac{\varphi(F)}{\lvert F \rvert}	\\
															&	=	\inf_{F \in \F(G)} \frac{\log Z_\Gamma(FU_0, \lambda)}{\lvert F \rvert} + \sum_{v \in U_0} q_v,
\end{align*}
and, after cancelling out $\sum_{v \in U_0} q_v$, we obtain that
$$
\lim_{n} \frac{\log Z_\Gamma(F_nU_0, \lambda)}{\lvert F_n \rvert} =  \inf_{F \in \F(G)} \frac{\log Z_\Gamma(FU_0, \lambda)}{\lvert F \rvert}.
$$

Now, given a general $\lambda$, we can always approximate it by some $G$-invariant $\tilde{\lambda}: V \to \mathbb{Q}_{>0}$ arbitrarily close in the supremum norm. Given $\epsilon > 0$, pick $\tilde{\lambda}$ so that $\tilde{\lambda}(v) \leq \lambda(v)\ \leq (1+\epsilon)\tilde{\lambda}(v)$ for every $v$. Then,
\begin{align*}
\log Z_\Gamma(FU_0,\tilde{\lambda})	&	\leq	\log Z_\Gamma(FU_0,\lambda)	\\
								&	\leq	\log Z_\Gamma(FU_0,(1+\epsilon)\tilde{\lambda})	\\
								&	\leq	\lvert FU_0 \rvert\log(1+\epsilon) + \log Z_\Gamma(FU_0,\tilde{\lambda}),
\end{align*}
so,
$$
\frac{\log Z_\Gamma(FU_0,\tilde{\lambda})}{\lvert F \rvert} \leq \frac{\log Z_\Gamma(FU_0,\lambda)}{\lvert F \rvert} \leq \lvert U_0 \rvert\log(1+\epsilon) + \frac{\log Z_\Gamma(FU_0,\tilde{\lambda})}{\lvert F \rvert}.
$$

Therefore,
\begin{align*}
\liminf_n \frac{\log Z_\Gamma(F_nU_0,\lambda)}{\lvert F_n \rvert}	&	\geq	\inf_{F \in \F(G)} \frac{\log Z_\Gamma(FU_0,\lambda)}{\lvert F \rvert}		\\
											&	\geq	\inf_{F \in \F(G)} \frac{\log Z_\Gamma(FU_0,\tilde{\lambda})}{\lvert F \rvert}	\\
											&	= \lim_n \frac{\log Z_\Gamma(F_nU_0,\tilde{\lambda})}{\lvert F_n \rvert}	\\
											&	\geq \limsup_n \frac{\log Z_\Gamma(F_nU_0,\lambda)}{\lvert F_n \rvert} - \lvert U_0 \rvert\log(1+\epsilon),
\end{align*}
and since $\epsilon$ was arbitrary, we conclude.
\end{proof}

In order to fully characterize $\lim_n \frac{\log Z_\Gamma(U_n, \lambda)}{\lvert U_n \rvert}$, we have the following lemma. 

\begin{lemma}
\label{lem:stab}
Let $\{F_n\}_n$ be F{\o}lner sequence and $U_0$ a fundamental domain. Then, for any F{\o}lner sequence $\{F_n\}_n$,
$$
\lim_n \frac{\lvert F_n U_0 \rvert}{\lvert F_n \rvert} = \sum_{v \in U_0} \lvert \Stab_G(v) \rvert^{-1}.
$$
\end{lemma}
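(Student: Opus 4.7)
The plan is to decompose $F_n U_0$ into the disjoint union of the partial orbits $F_n v$ for $v \in U_0$ and then use finiteness of each stabilizer together with the F{\o}lner property to estimate $|F_n v|$.

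First, I would check that the partial orbits $\{F_n v : v \in U_0\}$ are pairwise disjoint. By minimality of a fundamental domain, any two distinct vertices $v, v' \in U_0$ lie in distinct $G$-orbits: if $v' = g v$ for some $g \in G$ with $v \neq v'$, then $U_0 \setminus \{v'\}$ would still satisfy $G(U_0 \setminus \{v'\}) = V$, contradicting minimality. Consequently $F_n v \subseteq Gv$ and $F_n v' \subseteq Gv'$ are disjoint, so $|F_n U_0| = \sum_{v \in U_0} |F_n v|$.

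Next, fix $v \in U_0$ and let $H_v := \Stab_G(v)$, which is finite by the almost free assumption. By the orbit-stabilizer correspondence, the map $G \to Gv$, $g \mapsto g v$, has fibres equal to the left cosets of $H_v$, and the preimage of $F_n v$ is exactly $F_n H_v$. Hence
\[
|F_n v| = \frac{|F_n H_v|}{|H_v|}.
\]
Since $1_G \in H_v$ we have $F_n \subseteq F_n H_v = \bigcup_{h \in H_v} F_n h$, so
\[
0 \leq |F_n H_v| - |F_n| \leq \sum_{h \in H_v \setminus \{1_G\}} |F_n h \setminus F_n| \leq \sum_{h \in H_v \setminus \{1_G\}} |F_n h \triangle F_n|.
\]
Because $H_v$ is finite and $\{F_n\}_n$ is (right) F{\o}lner, dividing by $|F_n|$ and letting $n \to \infty$ yields $|F_n H_v|/|F_n| \to 1$, and therefore $|F_n v|/|F_n| \to |H_v|^{-1} = |\Stab_G(v)|^{-1}$.

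Summing over the finitely many $v \in U_0$, I get
\[
\lim_n \frac{|F_n U_0|}{|F_n|} = \sum_{v \in U_0} \lim_n \frac{|F_n v|}{|F_n|} = \sum_{v \in U_0} |\Stab_G(v)|^{-1},
\]
which is the claim. There is no real obstacle here; the only subtlety is matching the sidedness of the F{\o}lner condition to the left action $G \acts V$, but the finiteness of $H_v$ makes the absorption $|F_n H_v|/|F_n| \to 1$ immediate from either a right or two-sided F{\o}lner assumption (and the paper already notes that every countable amenable group admits a two-sided F{\o}lner exhaustion).
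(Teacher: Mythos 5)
Your proof is correct and follows essentially the same route as the paper's: decompose $F_nU_0$ into the disjoint partial orbits $F_nv$, use the orbit--stabilizer count $|F_n\Stab_G(v)| = |F_nv|\,|\Stab_G(v)|$, and conclude from $|F_n\Stab_G(v)|/|F_n|\to 1$ by the F{\o}lner property. You merely supply two details the paper leaves implicit (the disjointness of the partial orbits via minimality of $U_0$, and the explicit symmetric-difference estimate behind the absorption of the finite stabilizer), which is fine.
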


\begin{proof}
First, pick $v \in U_0$. Since $\Stab_G(v)$ is finite and $\{F_n\}_n$ is a F{\o}lner sequence, we have that $\lim_n \frac{\lvert F_n \Stab_G(v) \rvert}{\lvert F_n \rvert} = 1$. On the other hand, $F_n \Stab_G(v) v = F_n v$ and for each $v' \in F_n v$, there exist exactly $\lvert \Stab_G(v) \rvert$ different elements $g \in F_n \Stab_G(v)$ such that $g v = v'$. In other words,
$$
\lvert F_n \Stab_G(v) \rvert = \lvert F_n v \rvert \lvert \Stab_G(v) \rvert,
$$
so,
$$
\lim_n \frac{\lvert F_n v \rvert}{\lvert F_n \rvert} = \lim_n \frac{\lvert F_n \Stab_G(v) \rvert}{\lvert F_n \rvert\lvert \Stab_G(v) \rvert} = \lvert \Stab_G(v) \rvert^{-1}.
$$

Therefore,
$$
\lim_n \frac{\lvert F_n U_0 \rvert}{\lvert F_n \rvert} = \sum_{v \in U_0}  \lim_n \frac{\lvert F_n v \rvert}{\lvert F_n \rvert} = \sum_{v \in U_0} \lvert \Stab_G(v) \rvert^{-1}.
$$
\end{proof}

Now, given a fundamental domain $U_0$, define
$$
f_{G}(\Gamma,U_0,\lambda) := \inf_{F \in \F(G)}\frac{\log Z_\Gamma(FU_0, \lambda)}{\lvert FU_0 \rvert}.
$$
which, by Proposition \ref{prop:energy} and Lemma \ref{lem:stab}, is equal to
$$
\left(\sum_{v \in U_0} \lvert \Stab_G(v) \rvert^{-1}\right)^{-1} \lim_n \frac{\log Z_\Gamma(F_nU_0, \lambda)}{\lvert F_n \rvert}
$$
for any F{\o}lner sequence $\{F_n\}_n$ and, in particular, for any F{\o}lner exhaustion. Notice that, since $GU_0 = V$, the sequence $\{U_n\}_n$ defined as $U_n = F_nU_0$ is an exhaustion of $V$ in the sense that we were looking for. Now we will see that $f_{G}(\Gamma,U_0,\lambda)$ is independent of $U_0$.

\begin{proposition}
Given two fundamental domains $U_0$ and $U_0'$ of $G \acts \Gamma$, we have that
\begin{equation*}
f_{G}(\Gamma,U_0,\lambda) = f_{G}(\Gamma,U_0',\lambda).
\end{equation*}
\end{proposition}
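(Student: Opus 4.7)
The plan is to fix a F{\o}lner sequence $\{F_n\}_n$ and express both $f_G(\Gamma, U_0, \lambda)$ and $f_G(\Gamma, U_0', \lambda)$ as the same limit along $\{F_n\}_n$ divided by the same normalization. From the paragraph preceding the proposition,
$$
f_G(\Gamma, U_0, \lambda) = \frac{\lim_n |F_n|^{-1} \log Z_\Gamma(F_n U_0, \lambda)}{\lim_n |F_n|^{-1} |F_n U_0|},
$$
and similarly for $U_0'$, so it suffices to compare numerators and denominators separately.

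For the denominators, Lemma \ref{lem:stab} gives $\lim_n |F_n U_0|/|F_n| = \sum_{v \in U_0} |\Stab_G(v)|^{-1}$. Since a fundamental domain is a \emph{minimal} set with $GU = V$, removing any $v \in U$ must leave some orbit uncovered, which forces each $G$-orbit to meet $U$ in exactly one vertex. Together with the fact that $|\Stab_G(v)|$ is constant on $G$-orbits (conjugate stabilizers have the same cardinality), both $\sum_{v \in U_0} |\Stab_G(v)|^{-1}$ and $\sum_{v \in U_0'} |\Stab_G(v)|^{-1}$ equal $\sum_{O \in \Gamma/G} |\Stab_G(O)|^{-1}$ and therefore agree.

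For the numerators, let $L := \lim_n |F_n|^{-1} \log Z_\Gamma(F_n U_0, \lambda)$ and $L' := \lim_n |F_n|^{-1} \log Z_\Gamma(F_n U_0', \lambda)$, both of which exist by Proposition \ref{prop:energy}. Since $GU_0' = V \supseteq U_0$, there is a finite set $K \subseteq G$ with $U_0 \subseteq KU_0'$, so $F_n U_0 \subseteq F_n K U_0'$. The main tool is submultiplicativity of $Z_\Gamma$ on disjoint unions: if $A \cap B = \emptyset$, any independent set on $A \cup B$ restricts to independent sets on $A$ and on $B$ with factorizing $\lambda$-weights, so $Z_\Gamma(A \cup B, \lambda) \leq Z_\Gamma(A, \lambda)\, Z_\Gamma(B, \lambda)$. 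Writing $F_n K U_0' = (F_n U_0') \sqcup ((F_n K U_0') \setminus (F_n U_0'))$ and using monotonicity of $Z_\Gamma(\cdot, \lambda)$ in its support,
$$
\log Z_\Gamma(F_n U_0, \lambda) \leq \log Z_\Gamma(F_n U_0', \lambda) + \log Z_\Gamma\bigl((F_n K U_0') \setminus (F_n U_0'),\, \lambda\bigr).
$$
The error term is controlled because $(F_n K U_0') \setminus (F_n U_0') \subseteq (F_n K \setminus F_n) U_0'$ has cardinality at most $|U_0'| \cdot |F_n K \setminus F_n| \leq |U_0'| \sum_{k \in K} |F_n k \setminus F_n| = o(|F_n|)$ by the F{\o}lner property, combined with the trivial bound $Z_\Gamma(A, \lambda) \leq (1 + \lambda_+)^{|A|}$. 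Dividing by $|F_n|$ and letting $n \to \infty$ yields $L \leq L'$, and swapping the roles of $U_0$ and $U_0'$ gives the reverse inequality.

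The main obstacle is the boundary comparison in the third paragraph: one needs a useful containment of the form $F_n U_0 \subseteq F_n K U_0'$ and must verify that the resulting ``boundary'' set $(F_n K U_0') \setminus (F_n U_0')$ is asymptotically negligible on the scale $|F_n|$. This is precisely where the amenability hypothesis enters, through the F{\o}lner property applied to the finite translator set $K$.
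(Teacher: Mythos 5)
Your proof is correct and follows essentially the same route as the paper's: both hinge on a finite translating set $K$ with $U_0 \subseteq KU_0'$, the submultiplicativity and monotonicity of $Z_\Gamma(\cdot,\lambda)$ together with the crude exponential bound, and the F{\o}lner property to make the boundary term $o(|F_n|)$, followed by symmetry. The only cosmetic differences are that the paper bounds the symmetric difference $F U_0 \triangle F U_0'$ using both translating sets at once rather than your one-sided containment, and your explicit treatment of the normalization via Lemma~\ref{lem:stab} is a slightly more careful handling of the (almost free) stabilizer-weighted denominator than the paper's direct use of $|FU_0| = |F||U_0|$.
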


\begin{proof}
Since $V = GU_0 = GU_0'$, there must exist $K,K' \in \F(G)$ such that $U_0' \subseteq KU_0$ and $U_0 \subseteq K'U_0'$. Then, for every $F \in \F(G)$,
\begin{align*}
FU_0 \triangle FU_0'	&	=		(FU_0 \setminus FU_0') \cup (FU_0' \setminus FU_0)	\\
				&	\subseteq	(FK'U_0' \setminus FU_0') \cup (FKU_0 \setminus FU_0)	\\
				&	=		(FK' \setminus F)U_0' \cup (FK \setminus F)U_0.
\end{align*}

Therefore, $\lvert FU_0 \triangle FU_0 \rvert \leq \lvert FK' \setminus F \rvert\lvert U_0' \rvert + \lvert FK \setminus F \rvert\lvert U_0 \rvert$. Now, notice that for $U, U' \Subset V$, we always have that
\begin{enumerate}
\item $Z_\Gamma(U \cup U',\lambda) \leq Z_\Gamma(U,\lambda) \cdot Z_\Gamma(U',\lambda)$, provided $U \cap U' = \emptyset$;
\item $Z_\Gamma(U,\lambda) \leq Z_\Gamma(U',\lambda)$, provided $U \subseteq U'$; and
\item $Z_\Gamma(U,\lambda) \leq (2\max\{1,\lambda_+\})^{\lvert U \rvert}$,
\end{enumerate}
so
\begin{align*}
\log Z_\Gamma(FU_0,\lambda)	&	\leq \log Z_\Gamma(FU_0 \cap FU_0',\lambda) + \log Z_\Gamma(FU_0 \setminus FU_0',\lambda)			\\
						&	\leq \log Z_\Gamma(FU_0',\lambda) + \log Z_\Gamma(FU_0 \triangle FU_0',\lambda)						\\
						&	\leq \log Z_\Gamma(FU_0',\lambda) + \lvert FU_0 \triangle FU_0' \rvert\log(2\max\{1,\lambda_+\})						\\
						&	\leq \log Z_\Gamma(FU_0',\lambda) + (\lvert FK' \setminus F \rvert\lvert U_0' \rvert + \lvert FK \setminus F \rvert\lvert U_0 \rvert)\log(2\max\{1,\lambda_+\}).
\end{align*}

Finally, since $\lvert U_0 \rvert = \lvert U_0' \rvert$ and $\lvert FU_0 \rvert = \lvert F \rvert\lvert U_0 \rvert$, it follows by amenability that
\begin{align*}
f_G(\Gamma,U_0,\lambda)	&	=	\lim_n \frac{\log Z_\Gamma(F_nU_0,\lambda)}{\lvert F_nU_0 \rvert}		\\
						&	\leq	\lim_n \frac{\log Z_\Gamma(F_nU_0',\lambda)}{\lvert F_nU_0' \rvert} 	\\
						&	\qquad	+ \lim_n\left( \frac{\lvert F_nK' \setminus F_n \rvert}{\lvert F_n \rvert} + \frac{\lvert F_nK \setminus F_n \rvert}{\lvert F_n \rvert}\right)\log(2\max\{1,\lambda_+\})	\\
						&	=	f_{G}(\Gamma,U_0',\lambda),
\end{align*}
and by symmetry of the argument, we conclude.
\end{proof}

Then, we can consistently define the {\bf Gibbs $(\Gamma,\lambda)$-free energy according to $G$} as
\begin{equation*}
f_G(\Gamma,\lambda) := f_G(\Gamma,U_0,\lambda),
\end{equation*}
where $U_0$ is an arbitrary fundamental domain of $G \acts \Gamma$. In addition, it is easy to see that if $G_1$ and $G_2$ act almost transitively on $\Gamma$ and the $G_1$-orbits and $G_2$-orbits coincide, i.e., $G_1 v = G_2 v$ for all $v \in V$, then
\begin{equation*}
f_{G_1}(\Gamma,\lambda) = f_{G_2}(\Gamma,\lambda).
\end{equation*}

In particular, we have that $f_G(\Gamma,\lambda)$ is equal for all $G$ acting transitively on $\Gamma$. Then, we can define the {\bf Gibbs $(\Gamma,\lambda)$-free energy} as
\begin{equation*}
f(\Gamma,\lambda) := \inf_{\emptyset \neq U \Subset V} \frac{\log Z_\Gamma(U, \lambda)}{\lvert U \rvert},
\end{equation*}
which is a quantity that only depends on the graph $\Gamma$ and the activity function $\lambda$, and satisfies that $f(\Gamma,\lambda) = f_G(\Gamma,\lambda)$ for any $G \leq \Aut(\Gamma)$ acting transitively on $\Gamma$.

\begin{remark}
In the almost transitive case, $f_G(\Gamma,\lambda)$ does not necessarily coincide with $f(\Gamma,\lambda)$ for $G$ acting almost transitively: consider the graph $\Gamma$ obtained by taking the disjoint union of $\Gamma_1 = \Cay(\mathbb{Z},\emptyset)$ and $\Gamma_2 = \Cay(\mathbb{Z},\{1,-1\})$ and the constant activity function $\lambda \equiv 1$. Then, $f_{\mathbb{Z}}(\Gamma_1,1) = \log 2$ and $f_{\mathbb{Z}}(\Gamma_2,1) = \log\left(\frac{1+\sqrt{5}}{2}\right)$, so
$$
f_{\mathbb{Z}}(\Gamma,1) = \frac{1}{2}(f_{\mathbb{Z}}(\Gamma_1,1) + f_{\mathbb{Z}}(\Gamma_2,1)) > f_{\mathbb{Z}}(\Gamma_2,1) \geq \inf_{\emptyset \neq U \Subset V} \frac{\log Z_\Gamma(U, 1)}{\lvert U \rvert}.
$$

The value of $f_{\mathbb{Z}}(\Gamma_2,1)$ corresponds to the \emph{topological entropy} of the \emph{golden mean shift} (see \cite[Example 4.1.4]{1-lind} and Section \ref{sec9}).
\end{remark}

The main theme of this paper will be to explore our ability to approximate $f_G(\Gamma,\lambda)$. From now, and without much loss of generality, we will assume that $G \acts \Gamma$ is free (see Section \ref{subsec:9-7} for a reduction of the almost free case to the free case).

\section{Gibbs measures}
\label{sec4}

Given a graph $\Gamma = (V,E)$, consider the set $\{0,1\}^{V}$ endowed with the product topology and the set $X(\Gamma)$, with the subspace topology. The set of independent sets $X(\Gamma)$ is a compact and metrizable space. A base for the topology is given by the {\bf cylinder sets}
$$
[x_U] := \{x' \in X(\Gamma): x'_U = x_U\}
$$
for $U \Subset V$ and $x \in X(\Gamma)$, where $x_U$ denotes the \emph{restriction} of $x$ from $V$ to $U$. If $U$ is a singleton $\{v\}$, we will omit the brackets and simply write $x_v$ and the same convention will hold in analogous instances. Given $W \subseteq V$, we denote by $\mathcal{B}_W$ the smallest $\sigma$-algebra generated by
$$
\{[x_U]: U \Subset W, x \in X(\Gamma)\},
$$
and by $\mathcal{B}_\Gamma$ the {\bf Borel $\sigma$-algebra}, which corresponds to $\mathcal{B}_{V}$.

Let $\mathcal{M}(X(\Gamma))$ be the set of Borel probability measures $\Prob$ on $X(\Gamma)$. We say that $\Prob$ is {\bf $G$-invariant} if $\Prob(A) = \Prob(g \cdot A)$ for all $A \in \mathcal{B}_\Gamma$ and $g \in G$, and {\bf $G$-ergodic} if $g \cdot A = A$ for all $g \in G$ implies that $\Prob(A) \in \{0,1\}$. We will denote by $\mathcal{M}_G(X(\Gamma))$ and $\mathcal{M}_G^{\mathrm{erg}}(X(\Gamma))$ the set of $G$-invariant and the subset of $G$-invariant measures that are $G$-ergodic, respectively.

For $\Prob \in \mathcal{M}(X(\Gamma))$, define the {\bf support} of $\Prob$ as
$$
\supp(\Prob) := \left\{x \in X(\Gamma): \Prob([x_U]) > 0 \text{ for all } U \Subset V\right\}.
$$

Given $\emptyset \neq U \Subset V$ and $y \in X(\Gamma)$, we define $\pi^y_U$ to be the probability distribution on $X(\Gamma,U)$ given by 
$$
\pi_U^y(x) := \weight^y_\lambda(x,U) Z^y_\Gamma(U,\lambda)^{-1},
$$
where $\weight^y_\lambda(x,U) = \weight_\lambda(x,U) \mathbbm{1}_{[y_{U^{\rm c}}]}(x)$ and $Z^y_\Gamma(U,\lambda) = \sum_x \weight^y_\lambda(x,U)$. In other words, to each independent set $x$ supported on $U$, we associate a probability proportional to its $\lambda$-weight over $U$, $\prod_{v \in U} \lambda(v)^{x(v)}$, provided $x_U$ is \emph{compatible} with $y_{U^{\rm c}}$, in the sense that the element $z \in \{0,1\}^{V}$ such that $z_U = x_U$ and $z_{U^{\rm c}} = y_{U^{\rm c}}$ is an independent set.

Now, given an activity function $\lambda: V \to \mathbb{R}_{>0}$, consider the hardcore model $(\Gamma, \lambda)$ and the collection $\pi_{\Gamma,\lambda} = \{\pi^y_U: U \Subset V, y \in X(\Gamma)\}$. We call $\pi_{\Gamma,\lambda}$ the {\bf Gibbs $(\Gamma,\lambda)$-spe\-ci\-fi\-ca\-tion}. A measure $\Prob \in \mathcal{M}(X(\Gamma))$ is called a {\bf Gibbs measure (for $(\Gamma,\lambda)$)} if for all $U \Subset V$, $U' \subseteq U$, and $x \in X(\Gamma)$,
$$
\Prob([x_{U'}] \vert \mathcal{B}_{U^{\rm c}})(y) = \pi^y_U([x_{U'}])	\quad	\Prob\text{-a.s. in } y,
$$
where $\pi^y_U([x_U'])$ denotes the marginalization
$$
\pi^y_U([x_U']) = \sum_{x' \in X(\Gamma[U]): x'_{U'} = x_{U'}} \pi^y_U(x')
$$
and $\Prob(A \vert \mathcal{B}_U) = \mathbb{E}_{\Prob}(\mathbbm{1}_{A} \vert \mathcal{B}_U)$ for $A \in \mathcal{B}_\Gamma$. We denote by $\mathcal{M}_{\mathrm{Gibbs}}(\Gamma,\lambda)$ the set of Gibbs measures for $(\Gamma,\lambda)$.

An important question in statistical physics is whether the set of Gibbs measures is empty or not, and if not, whether there is a unique or multiple Gibbs measures \cite{1-georgii}.

\subsection{The locally finite case}

The model described in \cite[Example 4.16]{1-georgii} can be understood as an attempt to formalize the idea of a system where there is a single particle $1$ (uniformly distributed) or none, i.e., $0$ everywhere. There, it is proven that this model cannot be represented as a Gibbs measure. This example can be also viewed as a hardcore model in a countable graph that is \emph{complete} (i.e., there is an edge between any pair of different vertices) and, in particular, in a non-locally finite graph. In other words, there exist examples of non-locally finite graphs $\Gamma$ such that the $(\Gamma,\lambda)$-specification $\pi_{\Gamma,\lambda}$ has no Gibbs measure.

From now on, we will always assume that $\Gamma$ is locally finite. In this case, the existence of Gibbs measures is guaranteed (see \cite{2-brightwell,2-dobrushin}) and, moreover, every Gibbs measure must be a \emph{Markov random field} that is \emph{fully supported}.

Indeed, it can be checked that $\pi_{\Gamma,\lambda}$ is an example of a \emph{Markovian specification} (see \cite[Example 8.24]{1-georgii}). In this case, any Gibbs measure $\Prob \in \mathcal{M}_{\mathrm{Gibbs}}(\Gamma,\lambda)$ satisfies the following \emph{local Markov property}:
$$
\Prob([x_U] \vert \mathcal{B}_{U^{\rm c}})(y) = \Prob([x_U] \vert \mathcal{B}_{\partial U})(y)	\quad	\Prob\text{-a.s. in } y,
$$
for any $U \Subset V$ and $x \in X(\Gamma)$. In other words, $\Prob$ is a {\bf Markov random field}, so any event supported on a finite set conditioned to a specific value on its boundary is independent of events supported on the complement.

In addition, it can be checked that any Gibbs measure $\Prob$ must be {\bf fully supported}, i.e., $\supp(\Prob) = X(\Gamma)$. Indeed, it suffices to check that $X(\Gamma) \subseteq \supp(\Prob)$; the other direction follows directly from the definition of $\pi_{\Gamma,\lambda}$ and Gibbs measures. Now, given $x \in X(\Gamma)$ and $U \Subset V$, we would like to check that $\Prob([x_U]) > 0$. To prove this, observe that given $x \in X(\Gamma)$, we have that $z \in \{0,1\}^{V}$ defined as $z_U = x_U$, $z_{\partial U} \equiv 0$, and $z_{W^c} = y_{W^c}$, always belongs to $X(\Gamma)$ for any $y \in X(\Gamma)$, where $W = U \cup \partial U$. In particular, $\pi^y_{W}(z) > 0$ for any $y \in X(\Gamma)$. Then, considering that $\partial (W^c)$ is finite,
\begin{align*}	
\Prob([x_U])	&	\geq	\Prob([z_{W}])																			\\
			&	=	\sum_{y \in X(\Gamma,\partial W): \Prob([y_{\partial W}]) > 0} \Prob([z_{W}] \vert [y_{\partial W}])\Prob([y_{\partial W}])	\\
			&	=	\sum_{y \in X(\Gamma,\partial W): \Prob([y_{\partial W}]) > 0} \pi^y_{W}(z)\Prob([y_{\partial W}])					\\
			&	\geq	 \pi^{y^*}_{W}(z)\Prob([y^*_{\partial W}]) > 0,
\end{align*}
since $\Prob$ is a probability measure and there must exist $y^* \in X(\Gamma)$ such that $\Prob([y^*_{\partial W}]) > 0$. In other words, $X(\Gamma)$ satisfies the property (D*) introduced in \cite[1.14 Remark]{1-ruelle}, which guarantees full support.

\subsection{Spatial mixing and uniqueness}

Given a Gibbs $(\Gamma,\lambda)$-specification $\pi_{\Gamma,\lambda}$, we define two \emph{spatial mixing} properties fundamental to this work.

\begin{definition}
We say that a hardcore model $(\Gamma,\lambda)$ exhibits {\bf strong spatial mixing (SSM)} if there exists a \emph{decay rate} function $\delta: \mathbb{N} \to \mathbb{R}_{\geq 0}$ such that $\lim_{\ell \to \infty} \delta(\ell) = 0$ and for all $U \Subset V$, $v \in U$, and $y,z \in X(\Gamma)$,
$$
\vert \pi^y_U([0^v]) - \pi^z_U([0^v]) \rvert \leq \delta(\dist_\Gamma(v,D_U(y,z))),
$$
where  $[0^v]$ denotes the event that the vertex $v$ takes the value $0$ and
$$
D_U(y,z) := \{v' \in U^c: y(v') \neq z(v')\}.
$$
\end{definition}

This definition is equivalent (see \cite[Lemma 2.3]{2-marcus}) to the---a priori---stronger following property: for all $U' \subseteq U \Subset V$ and $x,y,z \in X(\Gamma)$,
$$
\lvert \pi^y_U([x_{U'}]) - \pi^z_U([x_{U'}]) \rvert \leq \lvert U' \rvert\delta(\dist_\Gamma(U',D_U(y,z))).
$$

Similarly, we say that $(\Gamma,\lambda)$ exhibits {\bf weak spatial mixing (WSM)} if for all $U' \subseteq U \Subset V$ and $x,y,z \in X(\Gamma)$,
$$
\lvert  \pi^y_U([x_{U'}]) - \pi^z_U([x_{U'}]) \rvert \leq \lvert U' \rvert \cdot \delta(\dist_\Gamma(U',U^c)).
$$

Clearly, SSM implies WSM. Moreover, it is well-known that, in this context, WSM (and therefore, SSM) implies uniqueness of Gibbs measures \cite{2-weitz}. In other words, $\mathcal{M}_{\mathrm{Gibbs}}(\Gamma,\lambda) = \{\Prob_{\Gamma,\lambda}\}$, where $\Prob_{\Gamma,\lambda}$ denotes the unique Gibbs measure for $(\Gamma,\lambda)$. In this case, $\Prob_{\Gamma,\lambda}$ is always $\Aut(\Gamma)$-invariant.

We say that $(\Gamma,\lambda)$ exhibits {\bf exponential SSM} (resp. {\bf exponential WSM}) if there exist constants $C,\alpha > 0$ such that $\pi_{\Gamma,\lambda}$ exhibits SSM (resp. WSM) with decay rate function $\delta(n) = C \cdot \exp(-\alpha \cdot n)$. 

Given $U \subseteq V$, we denote by $\Gamma \setminus U$ the subgraph induced by $V \setminus U$, i.e., $\Gamma[V \setminus U]$. We have the following result due to Gamarnik and Katz.

\begin{proposition}[{\cite[Proposition 1]{1-gamarnik}}]
\label{prop:GKssm}
If a hardcore model $(\Gamma,\lambda)$ satisfies SSM, then so does the hardcore model $(\Gamma',\lambda)$ for any subgraph $\Gamma'$ of $\Gamma$. The same assertion applies to exponential SSM. Moreover, for every $U \subseteq V$ and $v \in V \setminus U$, the following identity holds:
$$
\Prob_{\Gamma,\lambda}([0^v] \vert [0^U]) = \Prob_{\Gamma \setminus U,\lambda}([0^v]),
$$
where $\Prob_{\Gamma,\lambda}$ and $\Prob_{\Gamma \setminus U,\lambda}$ are the unique Gibbs measures for $(\Gamma,\lambda)$ and $(\Gamma \setminus U,\lambda)$, respectively, and $[0^U]$ denotes the event that all the vertices in $U$ take the value $0$. In particular, $\Prob_{\Gamma,\lambda}([0^v] \vert [0^U])$ is always well-defined, even if $U$ is infinite.
\end{proposition}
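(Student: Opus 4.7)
The plan is to handle the induced-subgraph case $\Gamma' = \Gamma[V']$ and set $W := V \setminus V'$. The key observation is that any $y' \in X(\Gamma')$ extends to some $y \in X(\Gamma)$ by declaring $y|_{W} \equiv 0$: adjoining zeros never violates an edge constraint, so $y$ is a valid independent set in $\Gamma$. For $U' \Subset V'$ and $x \in X(\Gamma, U')$ with $x|_W \equiv 0$, a direct comparison of specifications yields
$$\pi^{y'}_{U'}(x) = \pi^{y}_{U'}(x),$$
since the $\lambda$-weights coincide and all edges in $\Gamma$ between $U'$ and $W$ are trivially satisfied (their $W$-endpoint carries $0$). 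Applying SSM (respectively exponential SSM) in $\Gamma$ to the extensions $(y,z)$ of $(y',z')$ gives
$$\bigl|\pi^{y'}_{U'}([0^v]) - \pi^{z'}_{U'}([0^v])\bigr| \leq \delta\bigl(\dist_\Gamma(v, D_{U'}(y',z'))\bigr),$$
using that $D_{U'}(y,z) = D_{U'}(y',z')$ because $y$ and $z$ agree on $W$. A mild subtlety is that $\dist_\Gamma \leq \dist_{\Gamma'}$ on pairs in $V'$; in the exponential regime the decay in $\dist_{\Gamma'}$ is recovered by noting that a self-avoiding walk in $\Gamma'$ is also a self-avoiding walk in $\Gamma$, so the tree-of-SAW machinery (Section \ref{sec5}) transfers the exponential rate to the subgraph.

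\textbf{Part 3, finite $U$.} Since the Gibbs measure $\Prob_{\Gamma,\lambda}$ is fully supported on $X(\Gamma)$, the conditional $\mu := \Prob_{\Gamma,\lambda}(\,\cdot\mid [0^U])$ is well defined. I would show that the pushforward $\nu$ of $\mu$ under the restriction $x \mapsto x|_{V\setminus U}$ is a Gibbs measure for $(\Gamma\setminus U, \lambda)$: for $W' \Subset V\setminus U$ and boundary $y$ on $V\setminus W'$ with $y_U = 0^U$, the specification-matching argument from Parts 1--2 gives $\pi^y_{W'}([\bar x_{W'}])_\Gamma = \pi^{y|_{V\setminus U}}_{W'}([\bar x_{W'}])_{\Gamma\setminus U}$. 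Combining with the DLR equations for $\Prob_{\Gamma,\lambda}$ shows that $\nu$ satisfies the specification condition for $(\Gamma\setminus U, \lambda)$. By Parts 1--2, $(\Gamma\setminus U, \lambda)$ inherits SSM and therefore has a unique Gibbs measure, forcing $\nu = \Prob_{\Gamma\setminus U, \lambda}$. Evaluating the marginal at $[0^v]$ then yields the claimed identity.

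\textbf{Part 3, infinite $U$.} I would fix an exhaustion $U_n \uparrow U$ by finite subsets and invoke the finite case to get
$$\Prob_{\Gamma,\lambda}([0^v] \mid [0^{U_n}]) = \Prob_{\Gamma\setminus U_n, \lambda}([0^v]).$$
Convergence of the right-hand side to $\Prob_{\Gamma\setminus U,\lambda}([0^v])$ follows from SSM in $(\Gamma\setminus U_n, \lambda)$: since $\Gamma\setminus U \subseteq \Gamma\setminus U_n$ differ only on $U\setminus U_n$, and the latter set recedes from $v$ as $n\to\infty$, the single-site marginal at $v$ changes by at most $\delta(\dist_\Gamma(v, U\setminus U_n)) \to 0$. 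The left-hand side converges by an analogous application of SSM in $\Gamma$, and this common limit is what gives meaning to $\Prob_{\Gamma,\lambda}([0^v]\mid [0^U])$ for infinite $U$, independently of the exhaustion.

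\textbf{Main obstacle.} The principal difficulty is the infinite $U$ regime, where $[0^U]$ is typically a null event and the conditional probability only exists via the limiting procedure outlined above. Closing the argument cleanly depends on SSM cascading through every intermediate graph $\Gamma\setminus U_n$ and on controlling the difference between Gibbs measures on two nested subgraphs in terms of distance to the symmetric difference. A secondary, more cosmetic subtlety is that the SSM inherited by $\Gamma'$ in Parts 1--2 is naturally phrased in the ambient metric $\dist_\Gamma$; upgrading to $\dist_{\Gamma'}$ is where the tree-of-self-avoiding-walks representation (used in later sections) becomes indispensable.
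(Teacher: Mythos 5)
The paper does not actually prove this proposition: it is quoted from Gamarnik--Katz, and Remark \ref{ssm} only notes that their proof for constant $\lambda$ goes through verbatim for $G$-invariant $\lambda$. Your reconstruction follows the same route as the cited source (zero-extension of boundary conditions, matching of specifications, conditioning/uniqueness for the identity, exhaustion for infinite $U$), so in that sense the approach is the intended one. Two points, however, need repair.

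First, the metric subtlety in Parts 1--2 is not ``mild'' and your proposed fix does not close it. The zero-extension argument yields $|\pi^{y'}_{U'}([0^v])-\pi^{z'}_{U'}([0^v])|\le\delta(\dist_\Gamma(v,D))$, whereas SSM for $(\Gamma',\lambda)$ as defined in Section \ref{sec4} requires a bound $\delta'(\dist_{\Gamma'}(v,D))$ with $\delta'\to 0$. Since $\dist_{\Gamma'}\ge\dist_\Gamma$ and these can differ arbitrarily (e.g., $v$ and $D$ joined by a short path through deleted vertices and by a long path inside $\Gamma'$), the bound you obtain does not convert into one in the subgraph metric; and the observation that a self-avoiding walk in $\Gamma'$ is one in $\Gamma$ only says $\Gamma'$ has fewer such walks, which feeds into Theorem \ref{thm:sinclair} under a \emph{different} hypothesis (a bound on $\lambda_+$ against the connective constant) rather than upgrading the decay rate you already have. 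What the extension argument honestly proves is SSM for $\Gamma'$ with distances measured in the ambient graph $\Gamma$; you should state it that way and check (as one readily can) that this weaker form suffices both for uniqueness of the Gibbs measure on $\Gamma\setminus U$ and for every use of the proposition in the paper.

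Second, in the infinite-$U$ step the comparison of $\Prob_{\Gamma\setminus U_n,\lambda}([0^v])$ with $\Prob_{\Gamma\setminus U,\lambda}([0^v])$ cannot simply invoke the finite case, since these two graphs differ on the \emph{infinite} set $U\setminus U_n$; you need a separate SSM limit argument (condition on $0$ over $(U\setminus U_n)\cap B_\Gamma(v,\ell)$ and let $\ell\to\infty$) to control that difference. You also need to choose the exhaustion as $U_n=U\cap B_\Gamma(v,n)$ and use local finiteness to guarantee $\dist_\Gamma(v,U\setminus U_n)\to\infty$; for an arbitrary exhaustion this divergence is what makes the limit exhaustion-independent, and it deserves a sentence. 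With these two repairs the argument is complete and consistent with the cited proof.
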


\begin{remark}
\label{ssm}
Notice that any event of the form $[x_U]$ can be translated into an event of the form $[0^{U' }]$ for a suitable set $U'$: it suffices to define $U' = U \cup \partial\{v \in U: x_U(v) = 1\}$ since, deterministically, every neighbor of a vertex colored $1$ must be $0$, so Proposition \ref{prop:GKssm} still holds for more general events. We also remark that in \cite{1-gamarnik} it is assumed that $\lambda$ is a constant function. Here we drop this assumption, but it is direct to check that the same proof of \cite[Proposition 1]{1-gamarnik} also applies to the more general non-constant case.
\end{remark}

\subsection{Families of hardcore models}

We will denote by $\mathcal{H}$ the family of hardcore models $(\Gamma, \lambda)$ such that $\Gamma$ is a countable locally finite graph and $\lambda$ is any activity function $\lambda: V(\Gamma) \to \mathbb{R}_{>0}$.

Given a countable group $G$, we will denote by $\mathcal{H}_G$ the set of hardcore models $(\Gamma,\lambda)$ in $\mathcal{H}$ for which $G$ is isomorphic to some subgroup of $\Aut(\Gamma)$ such that $G \acts \Gamma$ is free and almost transitive and $\lambda: V(\Gamma) \to \mathbb{R}_{>0}$ is a $G$-invariant activity function.

Given a positive integer $\Delta$, we will denote by $\mathcal{H}^\Delta$ the set of hardcore models $(\Gamma,\lambda)$ in $\mathcal{H}$ such that $\Delta(\Gamma) \leq \Delta$. Notice that any hardcore model defined on the $\Delta$-regular (infinite) tree $\mathbb{T}_\Delta$ belongs to $\mathcal{H}^\Delta$. 

Given $\lambda_0 > 0$, we will denote by $\mathcal{H}(\lambda_0)$ the family of hardcore models $(\Gamma,\lambda)$ in $\mathcal{H}$ such that $\lambda_+ \leq \lambda_0$.

We will also combine the notation for these families in the natural way; for example, $\mathcal{H}_G^\Delta(\lambda_0)$ will denote the set of hardcore models $(\Gamma,\lambda)$ in $\mathcal{H}$ such that $G \acts \Gamma$ is free and almost transitive, $\lambda$ is $G$-invariant, $\Delta(\Gamma) \leq \Delta$, and $\lambda_+ \leq \lambda_0$.

\section{Trees}
\label{sec5}
 
Given a graph $\Gamma$, a {\bf trail} $w$ in $\Gamma$ is a finite sequence $w = (v_1,\dots,v_n)$ of vertices such that consecutive vertices are adjacent in $\Gamma$ and the edges $(v_i,v_{i+1})$ involved are not repeated. For a fixed vertex $v \in V(\Gamma)$, the {\bf tree of self-avoiding walks starting from $v$}, denoted by $T_{\mathrm{SAW}}(\Gamma,v)$, is defined as follows:
\begin{enumerate}
\item Consider the set $W_0$ of trails starting from $v$ that repeat no vertex and the set $W_1$ of trails that repeat a single vertex exactly once and then stop (i.e., the set of non-backtracking walks that end immediately after performing a cycle). We define $T_{\mathrm{SAW}}(\Gamma,v)$ to be a rooted tree with root $\rho = (v)$ such that the set of vertices $V(T_{\mathrm{SAW}}(\Gamma,v))$ is $W_0 \cup W_1$ and the set of (undirected) edges $E(T_{\mathrm{SAW}}(\Gamma,v))$ corresponds to all the pairs $(w,w')$ such that $w'$ is a one vertex extension of $w$ or vice versa. In simple words, $T_{\mathrm{SAW}}(\Gamma,v)$ is a rooted tree that represents all self-avoiding walks in $\Gamma$ that start from $v$. It is easy to check that the set of leaves of $T_{\mathrm{SAW}}(\Gamma,v)$ contains $W_1$, but they are not necessarily equal (e.g., see vertex $b$ in Figure \ref{fig:diagram1}).
\item For $u \in V(\Gamma)$, consider an arbitrary ordering $\partial\{u\} = \{u_1, \dots, u_d\}$ of its neighbors. Given $w \in W_1$, we can represent this walk as a sequence $$w = (v,\dots, u,u_i,\dots,u_j,u),$$ with $u_i,u_j \in \partial\{u\}$. Notice that $i \neq j$, since we are not repeating edges. Considering this, we condition the ``terminal'' trail $w$ to be $1$ (\emph{occupied}) if $i < j$ and to be $0$ (\emph{unoccupied}) if $i > j$, inducing the corresponding effect of this conditioning in the graph (i.e., removing the vertex and its neighbors or just removing the vertex, respectively).
\end{enumerate}

Given a hardcore model $(\Gamma,\lambda)$, a vertex $v \in V(\Gamma)$, a subset $U \subseteq V(\Gamma)$, and an independent set $x \in X(\Gamma)$, we are interested in computing the marginal probability that $v$ is unoccupied in $\Gamma$ given the partial configuration $x_U$, i.e., $\Prob_{\Gamma,\lambda}([0^v] \vert [x_U])$. Notice that if $(\Gamma,\lambda)$ satisfies SSM (which includes the particular but relevant case of $\Gamma$ being finite), then this probability is always well defined due to Proposition \ref{prop:GKssm}, even if $U$ is infinite.

\begin{figure}[ht]
\centering
\includegraphics[scale = 0.75]{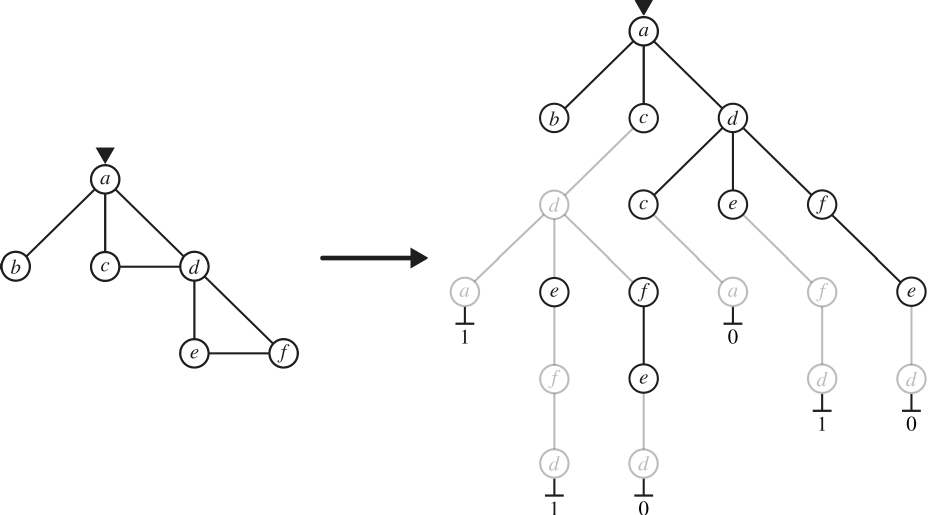}
\caption{A representation of a graph $\Gamma$ and its corresponding tree of self-avoiding walks $T_{\mathrm{SAW}}(\Gamma,v)$ including the conditioning of terminal trails ($\bot$). Here, the order of each neighborhood is alphabetical and every trail/vertex is represented by the final vertex of the trail in $\Gamma$ starting from $v = a$. See also \cite{1-weitz} for an explanation of the same picture}
\label{fig:diagram1}
\end{figure}

To understand better $\Prob_{\Gamma,\lambda}([0^v] \vert [x_U])$, we consider $(T_{\mathrm{SAW}}(\Gamma,v),\overline{\lambda})$ to be the hardcore model where $\overline{\lambda}(w) = \lambda(u)$ for every trail $w$ ending in $u$. In this context, a condition $x_U$ in $(\Gamma,\lambda)$ is translated into the condition $\overline{x_U}$ in $T_{\mathrm{SAW}}(\Gamma,v)$, whose support is the set $W(U)$ of trails $w$ that end in $u$ for some $u \in U$, and $\overline{x}(w) = x(u)$ for all these $w$s. We have the following result from \cite{1-weitz}, that we adapt to the more general non-constant $\lambda$ case and we include its proof for completeness.

\begin{theorem}[{\cite[Theorem 3.1]{1-weitz}}]
\label{thm21}
For every finite hardcore model $(\Gamma,\lambda)$, every $v \in V(\Gamma)$, and $U \subseteq V(\Gamma)$,
$$
\Prob_{\Gamma,\lambda}([0^v] \vert [x_U]) = \Prob_{T_{\mathrm{SAW}}(\Gamma,v),\overline{\lambda}}([0^\rho] \vert [\overline{x_U}]).
$$
\end{theorem}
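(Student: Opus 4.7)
Plan: The proof is by strong induction on $|V(\Gamma)|$, adapting Weitz's original argument \cite{1-weitz} to non-constant activities. Define $R_v^\Gamma(x_U) := \Prob_{\Gamma,\lambda}([1^v]\vert[x_U])/\Prob_{\Gamma,\lambda}([0^v]\vert[x_U])$, so the statement reduces to the ratio equality $R_v^\Gamma = R_\rho^{T_{\mathrm{SAW}}(\Gamma,v)}$ via $\Prob([0^v]\vert\cdot) = (1+R)^{-1}$. Let $u_1,\dots,u_d$ be the neighbors of $v$ in the ordering used to construct the SAW tree. Using that occupying $v$ forces every $u_i$ to be unoccupied (hardcore constraint) and that conditioning on $[0^w]$ is equivalent to deleting $w$, telescope the joint probability $\Prob_{\Gamma \setminus v,\lambda}([0^{u_1},\dots,0^{u_d}]\vert[x_U])$ in the order $u_d,u_{d-1},\dots,u_1$ to obtain
$$R_v^\Gamma(x_U) = \lambda(v)\prod_{i=1}^d \Prob_{\Gamma_i,\lambda}([0^{u_i}]\vert[x_U]),\qquad \Gamma_i := \Gamma \setminus \{v,u_{i+1},\dots,u_d\}.$$
The same telescoping at the root $\rho$ of $T_{\mathrm{SAW}}(\Gamma,v)$, whose children are $\{(v,u_i)\}_{i=1}^d$, yields
$$R_\rho^{T_{\mathrm{SAW}}(\Gamma,v)}(\overline{x_U}) = \overline{\lambda}(\rho)\prod_{i=1}^d \Prob_{T_i,\overline{\lambda}}([0^{(v,u_i)}]\vert[\overline{x_U}]),$$
where $T_i$ is the subtree of $T_{\mathrm{SAW}}(\Gamma,v)$ rooted at $(v,u_i)$ together with its inherited tag pinnings on its $W_1$-leaves.

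The heart of the argument is the identification
$$\Prob_{T_i,\overline{\lambda}}([0^{(v,u_i)}]\vert[\overline{x_U}]) = \Prob_{T_{\mathrm{SAW}}(\Gamma_i,u_i),\overline{\lambda}}([0^{\rho_i}]\vert[\overline{x_U}])$$
between each tagged subtree $T_i$ and the SAW tree of the reduced graph $\Gamma_i$ rooted at $u_i$. Although $T_i$ and $T_{\mathrm{SAW}}(\Gamma_i,u_i)$ need not be isomorphic as abstract trees, the tagging convention (tag $1$ if $i<j$, tag $0$ if $i>j$) is designed precisely so that the root marginals coincide: a cycle-closing walk in $T_i$ returning to $v$ through a later neighbor $u_j$ with $j>i$ -- which is absent from $\Gamma_i$ -- is tagged $1$, and pinning this leaf to occupancy forces its parent (a copy of $u_j$) to be unoccupied, faithfully encoding the deletion of $u_j$ in $\Gamma_i$; a walk returning through $u_j$ with $j<i$ -- still present in $\Gamma_i$ -- is tagged $0$ and imposes no constraint, consistent with $u_j$ being unrestricted. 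Since $|V(\Gamma_i)|<|V(\Gamma)|$, the induction hypothesis applied to $(\Gamma_i,\lambda|_{V(\Gamma_i)})$ gives $\Prob_{T_{\mathrm{SAW}}(\Gamma_i,u_i),\overline{\lambda}}([0^{\rho_i}]\vert[\overline{x_U}]) = \Prob_{\Gamma_i,\lambda}([0^{u_i}]\vert[x_U])$, closing the induction. The base case $|V(\Gamma)|=1$ is immediate, both sides reducing to $1/(1+\lambda(v))$.

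The main obstacle is the identification between the tagged $T_i$ and $T_{\mathrm{SAW}}(\Gamma_i,u_i)$ at the level of root marginals: they agree in marginal despite differing in global tree structure. Verifying this requires a careful case analysis of cycles through $v$, showing that each such cycle is unrolled in the SAW tree as a complementary pair of tag-$1$ and tag-$0$ leaves lying in different subtrees $T_i$, $T_{i'}$, whose combined pinnings reproduce exactly the hardcore constraints the cycle imposes in $\Gamma$, matched by the effect of vertex deletion in the reduced graphs. The extension from the constant-activity setting of \cite{1-weitz} requires no new combinatorial ideas: the weight function $\overline{\lambda}$ is inherited vertex-by-vertex from $\lambda$ along every trail (with $\overline{\lambda}(w)=\lambda(\text{endpoint of }w)$), and the weight factors track consistently through the bijection underlying the argument.
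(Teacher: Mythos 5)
Your proposal is correct and follows Weitz's strategy, as does the paper, but the two differ in one technical choice worth noting. The paper derives the recursion $R_{\Gamma,\lambda}(v,x_U)=\lambda(v)\prod_i(1+R_i)^{-1}$ via the auxiliary split graph $\Gamma'$, in which $v$ is replaced by $d$ copies $v_1,\dots,v_d$ with activities $\lambda(v)^{1/d}$, each attached only to $u_i$; the $i$th factor is then the root ratio of $\Gamma'\setminus\{v_i\}$ under the pinning $z_i=0^{\{v_1,\dots,v_{i-1}\}}1^{\{v_{i+1},\dots,v_d\}}$. The payoff is that $T_{\mathrm{SAW}}(\Gamma'\setminus\{v_i\},u_i)$ with the condition $z_i$ on the leaf copies of the $v_j$ is \emph{literally isomorphic} as a tagged tree to the subtree $T_i$ of $T_{\mathrm{SAW}}(\Gamma,v)$, so the induction closes with no further argument. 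You instead telescope $\Prob_{\Gamma\setminus v}([0^{u_1}\cdots 0^{u_d}]\mid[x_U])$ directly and land on the genuinely reduced graphs $\Gamma_i=\Gamma\setminus\{v,u_{i+1},\dots,u_d\}$; this is the same identity in content (pinning $v_j$ to $1$ in $\Gamma'$ deletes $u_j$), but now $T_i$ and $T_{\mathrm{SAW}}(\Gamma_i,u_i)$ are \emph{not} isomorphic, and you must argue equality of root marginals. Your mechanism for this is right, but to discharge it you should state explicitly that \emph{every} copy of $u_j$ with $j>i$ occurring anywhere in $T_i$ has a tag-$1$ leaf child (the closing step back to $v$ is always available, since a self-avoiding walk leaving $v$ via $u_i$ never reuses the edge $(u_j,v)$), so each such copy is forced unoccupied and its entire descendant subtree is disconnected from the root; pruning all of them yields exactly $T_{\mathrm{SAW}}(\Gamma_i,u_i)$ with the inherited neighbor orderings, hence inherited tags. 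With that sentence added, your route is a complete and slightly more economical version of the paper's proof, trading the auxiliary-graph bookkeeping for a short pruning argument. (Both versions silently set aside the degenerate cases $v\in U$ and $x_U(u_j)=1$, which are immediate.)
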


\begin{proof}
Instead of probabilities, we work with the ratios
$$
R_{\Gamma,\lambda}(v,x_U) := \frac{\Prob_{\Gamma,\lambda}([1^v] \vert [x_U])}{\Prob_{\Gamma,\lambda}([0^v] \vert [x_U])},
$$
where if $v \in U$ and $x_U(v)$ is equal to $1$ or $0$, we let $R_{\Gamma,\lambda}(v,x_U)$ to be $\infty$ or $0$, respectively. Notice that
$$
\Prob_{\Gamma,\lambda}([0^v] \vert [x_U]) = \frac{1}{1+R_{\Gamma,\lambda}(v,x_U)}	\quad \mbox{ and } \quad \Prob_{\Gamma,\lambda}([1^v] \vert [x_U]) = \frac{R_{\Gamma,\lambda}(v,x_U)}{1+R_{\Gamma,\lambda}(v,x_U)}.
$$

Given a finite tree $T$ rooted at $\rho$, let's denote by $\left\{\rho_1,\dots,\rho_d\right\}$ the set of neighbors $\partial \{\rho\}$ of $\rho$ and by $T_i$, for $i=1,\dots,d$, the corresponding subtrees starting from $\rho_i$, i.e., $V(T) = \{\rho\} \cup V(T_1) \cup \cdots \cup V(T_d)$. If we have a condition $x_U$ on $U$, we define $U_i = U \cap V(T_i)$ and $x_{U_i} = (x_{U})\vert_{U_i}$. Considering this, we have that
\begin{align*}
R_{T,\lambda}(\rho, x_U)	&	=	\frac{\Prob_{T,\lambda}([1^\rho] \vert [x_U])}{\Prob_{T,\lambda}([0^\rho] \vert [x_U])}	\\
					&	=	\frac{\lambda(\rho) \cdot Z^{x_U}_{T \backslash \{\rho \cup \partial \{\rho\}\}}(\lambda)}{Z^{x_U}_T(\lambda)} \cdot \frac{Z^{x_U}_T(\lambda)}{Z^{x_U}_{T \backslash \{\rho\}}(\lambda)}	\\
					&	=	\lambda(\rho) \cdot \prod_{i=1}^{d}\frac{Z^{x_{U_i}}_{T_i \backslash \{\rho_i\}}(\lambda_i)}{Z^{x_{U_i}}_{T_i}(\lambda)}	\\										
					&	=	\lambda(\rho) \cdot \prod_{i=1}^{d} \Prob_{T_i,\lambda}([0^{\rho_i}] \vert [x_{U_i}])	\\
					&	=	\lambda(\rho) \cdot \prod_{i=1}^{d} \frac{1}{1+R_{T_i,\lambda}(\rho_i, x_{U_i})},
\end{align*}
where
$$
Z^{x_U}_\Gamma(\lambda) := \sum_{y \in X(\Gamma): y_U = x_U} \prod_{v \in V(\Gamma)} \lambda(v)^{y(v)}.
$$

Notice that this gives us a linear recursive procedure for computing $R_{T,\lambda}(\rho, x_U)$, and therefore $\Prob_{T,\lambda}([0^\rho] \vert [x_U])$, with base cases: $R_{T,\lambda}(\rho, x_U) = 0 \text{ or } +\infty$ if $\rho$ is fixed, and $R_{T,\lambda}(\rho, x_U) = \lambda(\rho)$ if $\rho$ is free and isolated.

Now, consider an arbitrary hardcore model $(\Gamma,\lambda)$ and $v \in V(\Gamma)$ with neighbors $\partial\{v\} = \{u_1,\dots,u_d\}$. We consider the auxiliary hardcore model $(\Gamma',\lambda')$, where
\begin{itemize}
	\item $V(\Gamma') = V(\Gamma) \backslash \{v\} \cup \{v_1,\dots,v_d\}$,
	\item $E(\Gamma') = E(\Gamma) \backslash \{(v,u_i)\}_{i=1,\dots,d} \cup \{(v_i,u_i)\}_{i=1,\dots,d}$,
	\item $\lambda'(v_i) = \lambda(v)^{1/d}$ for $i=1,\dots,d$, and $\lambda'(u) = \lambda(u)$, otherwise.
\end{itemize}

Notice that
\begin{align*}
R_{\Gamma,\lambda}(v, x_U)	&	=	\frac{\Prob_{\Gamma,\lambda}\left([1^v] \middle\vert [x_U]\right)}{\Prob_{\Gamma,\lambda}\left([0^v] \middle\vert [x_U]\right)}	\\
						&	=	\frac{\Prob_{\Gamma',\lambda'}\left([1^{\{v_1, \dots, v_d\}}] \middle\vert [x_U]\right)}{\Prob_{\Gamma',\lambda'}\left([0^{\{v_1, \dots, v_d\}}] \middle\vert [x_U]\right)}	\\
						&	=	\prod_{i=1}^{d}\frac{\Prob_{\Gamma',\lambda'}\left([0^{\{v_1,\dots,v_{i-1}\}}1^{\{v_i, \dots, v_d\}}] \middle\vert [x_U]\right)}{\Prob_{\Gamma',\lambda'}\left([0^{\{v_1,\dots,v_i\}}1^{\{v_{i+1}, \dots, v_d\}}] \middle\vert [x_U]\right)}	\\
						&	=	\prod_{i=1}^{d}\frac{\Prob_{\Gamma',\lambda'}\left([1^{v_i}] \middle\vert [x_Uz_i]\right)}{\Prob_{\Gamma',\lambda'}\left([0^{v_i}] \middle\vert [x_Uz_i]\right)}	\\
						&	=	\prod_{i=1}^{d}R_{\Gamma',\lambda'}(v_i,x_Uz_i),
\end{align*}
where $z_i = 0^{\{v_1,\dots,v_{i-1}\}}1^{\{v_{i+1}, \dots, v_d\}}$ and $x_Uz_i$ is the concatenation of $x_U$ and $z_i$. Now, since $v_i$ is connected only to $u_i$, notice that
$$
R_{\Gamma',\lambda'}(v_i,x_Uz_i)	=	\frac{\lambda'(v_i) \cdot Z^{x_Uz_i}_{\Gamma' \backslash \{v_i,u_i\}}(\lambda')}{Z^{x_U z_i}_{\Gamma' \backslash \{v_i\}}(\lambda')}
																	=	\frac{\lambda^{1/d}(v)}{1+R_{\Gamma' \backslash \{v_i\},\lambda'}(u_i,x_Uz_i)}.
$$

Therefore,
$$
R_{\Gamma,\lambda}(v, x_U)	= \prod_{i=1}^{d}\frac{\lambda^{1/d}(v)}{1+R_{\Gamma' \backslash \{v_i\},\lambda'}(u_i,x_Uz_i)} = \lambda(v) \cdot \prod_{i=1}^{d}\frac{1}{1+ R_{\Gamma' \backslash \{v_i\},\lambda'}(u_i,x_Uz_i)}.
$$

Notice that the previous recursion can increase the original number of vertices, but the number of free vertices always decreases, so the recursion ends. Then, we have that
\begin{enumerate}
\item $R_{T,\lambda}(v, x_U) = \lambda(\rho) \cdot f\left(R_{T_1,\lambda}(\rho_1, x_{U_1}),\dots,R_{T_d,\lambda}(\rho_d, x_{U_d})\right)$ and
\item $R_{\Gamma,\lambda}(v, x_U) = \lambda(v) \cdot f\left(R_{\Gamma' \backslash \{v_1\},\lambda'}(u_1,x_U\tau_1),\dots,R_{\Gamma' \backslash \{v_d\},\lambda'}(u_d,x_U\tau_d)\right)$,
\end{enumerate}
where $f(r_1,\dots,r_d) = \prod_{i=1}^{d}\frac{1}{1+r_i}$. Now we proceed by induction in the number of free vertices. We can consider the base case where there are no free vertices (besides $v$) and the theorem is trivial.  Then, if we know that the theorem is true when we have $n$ free vertices, we prove it for $n+1$. Notice that if $R_{\Gamma',\lambda'}(v,x_U)$ involves $n+1$ free vertices, then $R_{\Gamma' \setminus  \{v\},\lambda'}(v_i,x_Uz_i)$ involves $n$ free vertices, so by the induction hypothesis,
$$
R_{\Gamma' \backslash \{v_i\},\lambda}(u_i,x_Uz_i) = R_{T_{\mathrm{SAW}}(\Gamma,v),\overline{\lambda}}(\rho_i,\overline{x_{U}z_i}).
$$

Then, noticing that the rooted subtree $(T_i,\rho_i)$ and the condition $\overline{x_{U}z_i}$ gives exactly the tree of self-avoiding walks of $\Gamma' \backslash \{v_i\}$ starting from $u_i$ under the condition $x_Uz_i$, we are done.
\end{proof}

\begin{figure}[ht]
\centering
\includegraphics[scale = 0.75]{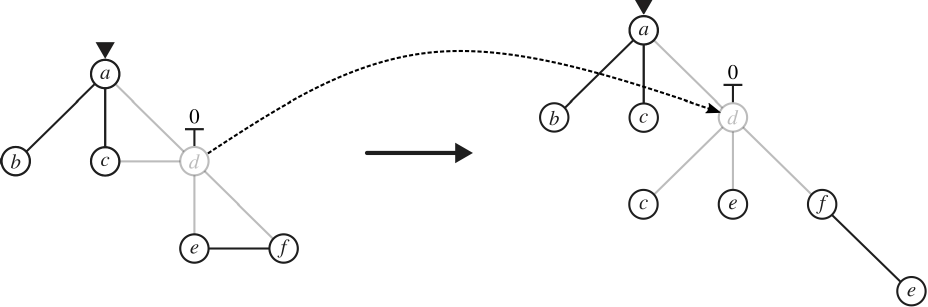}
\caption{A condition ($\top$) on $\Gamma$ and its representation on the tree of self-avoiding walks $T_{\mathrm{SAW}}(\Gamma,v)$ for $v = a$. The only relevant portion of the tree for computing the marginal probability associated to the root is its connected component}
\label{fig:diagram2}
\end{figure}

\begin{remark}
The recursions presented in the proof of Theorem \ref{thm21} give us a recursive procedure to compute the marginal probability of the root $\rho$ of a tree $T$ being occupied which requires linear time with respect to the size of the tree. On the other hand, if $\Gamma$ is such that $\Delta(\Gamma) \leq \Delta$, then $T_{\mathrm{SAW}}(\Gamma,v)$ is a subtree of $\mathbb{T}_\Delta$ and its size of $T_{\mathrm{SAW}}(\Gamma,v)$ can be (at most) exponential in the size of $\Gamma$. Since hardcore models are Markov random fields and we are interested in the sensitivity of the root $\rho$ associated to $v$, we only need to consider the graph obtained after pruning all the subtrees below $W(U)$ (see Figure \ref{fig:diagram2}).
\end{remark}

Before stating the main results concerning hardcore models and strong spatial mixing, we will establish the following bounds.

\begin{lemma}
\label{lem:probbound}
Given a finite hardcore model $(\Gamma,V) \in \mathcal{H}^\Delta$ and $v \in V$, we have that
$$
0 < \frac{1}{1+\lambda_+} \leq \Prob_{\Gamma,\lambda}([0^v]) \leq \frac{(1+\lambda_+)^\Delta}{\lambda_- + (1+\lambda_+)^\Delta} < 1
$$
and
$$
0 < \frac{\lambda_-}{\lambda_- + (1+\lambda_+)^\Delta} \leq \Prob_{\Gamma,\lambda}([1^v]) \leq \frac{\lambda_+}{1+\lambda_+}  < 1.
$$
\end{lemma}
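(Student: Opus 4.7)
The plan is to reduce both pairs of inequalities to a single two-sided bound on the ratio
$$R := \frac{\Prob_{\Gamma,\lambda}([1^v])}{\Prob_{\Gamma,\lambda}([0^v])}.$$
Since $\Gamma$ is finite and $\lambda$ is strictly positive, $R$ is a finite positive number, and the elementary identities $\Prob_{\Gamma,\lambda}([0^v]) = 1/(1+R)$ and $\Prob_{\Gamma,\lambda}([1^v]) = R/(1+R)$ (already used in the proof of Theorem \ref{thm21}) translate two-sided bounds on $R$ directly into the claimed inequalities. It therefore suffices to prove
$$\frac{\lambda_-}{(1+\lambda_+)^\Delta} \leq R \leq \lambda_+.$$

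Let $N := \partial\{v\}$, so $|N| \leq \Delta$. Separating the contributions to $Z_\Gamma(\lambda)$ according to the value of $x(v)$ and using that any independent set containing $v$ must vanish on $N$, one obtains
$$R = \lambda(v) \cdot \frac{Z_{\Gamma \setminus (\{v\} \cup N)}(\lambda)}{Z_{\Gamma \setminus \{v\}}(\lambda)}.$$
The upper bound on $R$ follows by restricting the sum defining $Z_{\Gamma \setminus \{v\}}(\lambda)$ to those configurations that vanish on $N$, which gives $Z_{\Gamma \setminus \{v\}}(\lambda) \geq Z_{\Gamma \setminus (\{v\} \cup N)}(\lambda)$ and hence $R \leq \lambda(v) \leq \lambda_+$. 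For the lower bound, I would decompose $Z_{\Gamma \setminus \{v\}}(\lambda)$ according to the restriction $y$ of the independent set to $N$:
$$Z_{\Gamma \setminus \{v\}}(\lambda) = \sum_{y \in X(\Gamma[N])} \prod_{u \in N}\lambda(u)^{y(u)} \cdot Z^{y}_{\Gamma \setminus (\{v\} \cup N)}(\lambda),$$
where $Z^{y}$ is the partition function on $\Gamma \setminus (\{v\} \cup N)$ subject to the extra constraint that each neighbor of an occupied vertex of $y$ is unoccupied. Bounding each $Z^{y}$ above by the unconstrained $Z_{\Gamma \setminus (\{v\} \cup N)}(\lambda)$ and enlarging the range of $y$ from $X(\Gamma[N])$ to $\{0,1\}^N$ yields
$$Z_{\Gamma \setminus \{v\}}(\lambda) \leq \prod_{u \in N}(1+\lambda(u)) \cdot Z_{\Gamma \setminus (\{v\} \cup N)}(\lambda) \leq (1+\lambda_+)^\Delta \cdot Z_{\Gamma \setminus (\{v\} \cup N)}(\lambda),$$
so $R \geq \lambda(v)/(1+\lambda_+)^\Delta \geq \lambda_-/(1+\lambda_+)^\Delta$, as required.

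Substituting the two-sided bound $\lambda_-/(1+\lambda_+)^\Delta \leq R \leq \lambda_+$ into $\Prob_{\Gamma,\lambda}([0^v]) = 1/(1+R)$ and $\Prob_{\Gamma,\lambda}([1^v]) = R/(1+R)$ recovers all four stated inequalities, and every denominator is manifestly positive since $\lambda_\pm > 0$. I expect no substantial obstacle here: the lemma is a direct partition-function computation, and the only point requiring a little care is the decomposition of $Z_{\Gamma \setminus \{v\}}(\lambda)$ over the configuration on $N$ needed to produce the upper bound.
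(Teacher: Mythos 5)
Your proof is correct, and it takes a more elementary route than the paper's for the harder half of the lemma. Both arguments hinge on the ratio $R = \Prob_{\Gamma,\lambda}([1^v])/\Prob_{\Gamma,\lambda}([0^v])$ and the identities $\Prob([0^v]) = 1/(1+R)$, $\Prob([1^v]) = R/(1+R)$, and both obtain the upper bound $R \leq \lambda_+$ in essentially the same way (the paper conditions on $[0^{\partial\{v\}}]$ and uses the Markov property, which amounts to your monotonicity $Z_{\Gamma\setminus\{v\}}(\lambda) \geq Z_{\Gamma\setminus(\{v\}\cup N)}(\lambda)$). Where you diverge is the lower bound $R \geq \lambda_-/(1+\lambda_+)^\Delta$: the paper first invokes Theorem~\ref{thm21} to replace $\Gamma$ by the tree of self-avoiding walks rooted at $v$, and then runs the tree recursion $R = \lambda(v)\prod_i (1+R_i)^{-1}$, bounding each $R_i$ by $\lambda_+$ via the already-proved upper bound. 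You instead stay on the original graph and decompose $Z_{\Gamma\setminus\{v\}}(\lambda)$ over the configuration $y$ on $N = \partial\{v\}$, bound each conditional partition function by the unconstrained one, and enlarge the sum over $y$ to all of $\{0,1\}^N$ to get the factor $\prod_{u\in N}(1+\lambda(u)) \leq (1+\lambda_+)^\Delta$. Your version is self-contained and avoids importing the SAW-tree machinery for what is really a local counting estimate; the paper's version buys nothing extra here beyond reusing a theorem it has already established. Both yield exactly the stated constants, so there is no gap.
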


\begin{proof}
Notice that, since a $1$ at $v$ forces $0$s in $\partial\{v\}$,
$$
\Prob_{\Gamma,\lambda}([1^v]) = \Prob_{\Gamma,\lambda}([1^v] \vert [0^{\partial \{v\}}]) \Prob_{\Gamma,\lambda}([0^{\partial \{v\}}]) \leq	\Prob_{\Gamma,\lambda}([1^v] \vert [0^{\partial \{v\}}]),
$$
so, considering that $\Prob_{\Gamma,\lambda}$ is a Markov random field and $\frac{\lambda}{1+\lambda}$ is increasing in $\lambda > 0$, we obtain that 
$$
\Prob_{\Gamma,\lambda}([1^v]) \leq \Prob_{\Gamma,\lambda}([1^v] \vert [0^{\partial \{v\}}]) = \frac{\lambda(v)}{1+\lambda(v)} \leq \frac{\lambda_+}{1+\lambda_+} < 1,
$$
and
$$
\Prob_{\Gamma,\lambda}([0^v]) = 1 - \Prob_{\Gamma,\lambda}([1^v]) \geq 1 -\frac{\lambda_+}{1+\lambda_+} = \frac{1}{1+\lambda_+} > 0.
$$

On the other hand, by Theorem \ref{thm21}, without loss of generality, we can suppose that $\Gamma$ is a tree rooted at $v$. Then, if $\Gamma_i$ denotes the $i$th subtree of $\Gamma$ rooted at $v_i \in \partial \{v\}$,
$$
\frac{\Prob_{\Gamma,\lambda}([1^v])}{\Prob_{\Gamma,\lambda}([0^v])}	 = \lambda(v) \cdot \prod_{i=1}^{d} \frac{1}{1+\frac{\Prob_{(\Gamma_i,\lambda)}([1^{v_i}])}{\Prob_{(\Gamma_i,\lambda)}([0^{v_i}])}} \geq	\lambda_- \cdot \prod_{i=1}^{d} \frac{1}{1+\frac{\frac{\lambda_+}{1+\lambda_+}}{\frac{1}{1+\lambda_+}}} \geq \frac{\lambda_-}{(1+\lambda_+)^{\Delta}}.
$$

Therefore, since $\Prob_{\Gamma,\lambda}([0^v]) = 1 - \Prob_{\Gamma,\lambda}([1^v])$, we have that
$$
\Prob_{\Gamma,\lambda}([1^v])	\geq \frac{\lambda_-}{\lambda_- + (1+\lambda_+)^\Delta} > 0 \quad \text{ and } \quad \Prob_{\Gamma,\lambda}([0^v]) \leq  \frac{(1+\lambda_+)^\Delta}{\lambda_- + (1+\lambda_+)^\Delta} < 1.
$$
\end{proof}

We define the {\bf critical activity function} $\lambda_c: [2,+\infty) \to (0,+\infty]$ as
$$
\lambda_c(t) := \frac{(t-1)^{(t-1)}}{(t-2)^{t}}.
$$

We have the following result.

\begin{proposition}[{\cite{1-kelly}}]
\label{prop:kelly}
For every $\Delta \in \mathbb{N}$, the hardcore model $(\mathbb{T}_{\Delta}, \lambda_0)$ exhibits WSM if and only if $\lambda_0 \leq \lambda_c(\Delta)$. If the inequality is strict, then $(\mathbb{T}_{\Delta}, \lambda_0)$ exhibits exponential WSM with a decay rate $\delta$ involving constants that depend on $\Delta$ and $\lambda_0$.
\end{proposition}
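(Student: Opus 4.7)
The plan is to reduce the problem to a one-dimensional dynamical system via the tree recursion from Theorem~\ref{thm21} specialized to $\mathbb{T}_\Delta$. Fix a root $\rho$; removing $\rho$ splits $\mathbb{T}_\Delta$ into $\Delta$ subtrees, each rooted at a neighbor of $\rho$ and isomorphic to a $(\Delta-1)$-ary rooted tree. For any boundary condition $y$ specified at depth $\ell$, let $R_\ell(y)$ denote the ratio $\Prob([1^\rho]\mid y)/\Prob([0^\rho]\mid y)$ computed in the truncation of depth $\ell$. By Theorem~\ref{thm21} it satisfies
$$
R_\ell(y) \;=\; \lambda_0 \prod_{i=1}^{d} \frac{1}{1+R_{\ell-1}(y^{(i)})}, \qquad d := \Delta-1,
$$
with $R_0 \in \{0,+\infty\}$ determined by the boundary, and $y^{(i)}$ the restriction of $y$ to the $i$th subtree. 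WSM at $\rho$ with decay $\delta(\ell)$ is equivalent to a uniform bound $|R_\ell(y)-R_\ell(z)| \leq O(\delta(\ell))$ over all boundary pairs $(y,z)$.

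I would then reduce the multivariate recursion to the one-dimensional map $F(x) = \lambda_0/(1+x)^d$. Because $F$ is strictly decreasing, the worst-case gap
$$
M_\ell := \sup_y R_\ell(y) - \inf_y R_\ell(y)
$$
is controlled by iterating $G = F\circ F$, which is strictly increasing. The map $F$ has a unique positive fixed point $x^*$ satisfying $\lambda_0 = x^*(1+x^*)^d$, and a direct computation gives $F'(x^*) = -\,d\,x^*/(1+x^*)$. The threshold $|F'(x^*)| = 1$ yields $x^*_c = 1/(d-1) = 1/(\Delta-2)$; substituting back into $\lambda_0 = x^*(1+x^*)^d$ produces exactly $\lambda_0 = \lambda_c(\Delta) = (\Delta-1)^{\Delta-1}/(\Delta-2)^\Delta$.

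For the forward direction ($\lambda_0 < \lambda_c$ implies exponential WSM), I would pass to a change of variables such as the potential $\Phi(x) = \log(1+1/x)$ (or an equivalent one used by Kelly and later by Weitz), under which $F$ becomes globally $|F'(x^*)|$-Lipschitz on a compact invariant interval containing every iterate $R_\ell(y)$. This gives $M_\ell \leq C \kappa^\ell$ for some $\kappa = \kappa(\Delta,\lambda_0) < 1$, i.e.\ exponential WSM with constants depending on $\Delta$ and $\lambda_0$. At the borderline $\lambda_0 = \lambda_c$ one still obtains $M_\ell \to 0$ (with polynomial rate), so WSM holds but not exponentially. For the converse ($\lambda_0 > \lambda_c$ implies failure of WSM), I would note that $G = F\circ F$ now has three fixed points $x^- < x^* < x^+$ with $x^*$ repelling; the two boundary sequences ``all-occupied on even depths, all-free on odd depths'' and its swap produce two sequences $R_\ell$ converging respectively to $x^+$ and $x^-$, so $M_\ell \not\to 0$.

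The main obstacle is the forward direction: the pointwise linearization $|F'(x^*)| < 1$ does not by itself yield a uniform contraction on the full range of possible $R_\ell$ values, so one cannot merely invoke the derivative at the fixed point. The standard remedy is to find a one-variable potential $\Phi$ that globally linearizes $F$, verify that it maps the iteration range into itself, and check that the Lipschitz constant of $\Phi \circ F \circ \Phi^{-1}$ is bounded by a universal $\kappa < 1$ exactly when $\lambda_0 < \lambda_c(\Delta)$. Identifying this potential and establishing its invariance on the relevant compact interval is the technical heart of the argument.
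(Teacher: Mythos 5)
The paper offers no proof of this proposition: it is stated as a known result and attributed directly to Kelly and Spitzer (the citations \cite{1-kelly} and \cite{1-spitzer} immediately preceding the statement). Your sketch is essentially a reconstruction of that classical argument, and its skeleton is sound: the reduction to the univariate recursion $F(x)=\lambda_0/(1+x)^{\Delta-1}$, the identification of the extremal (all-occupied/all-unoccupied) boundary conditions via monotonicity of $F$, the fixed-point computation $F'(x^*)=-\,d\,x^*/(1+x^*)$, and the recovery of $\lambda_c(\Delta)=(\Delta-1)^{\Delta-1}/(\Delta-2)^\Delta$ from $|F'(x^*)|=1$ are all correct, as is the non-uniqueness mechanism (three fixed points of $G=F\circ F$) for $\lambda_0>\lambda_c$.

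Two remarks on the forward direction. First, for WSM (as opposed to SSM) the potential-function machinery you invoke is heavier than necessary: since all boundary conditions at distance $\ell$ are sandwiched between the two extremal ones, and since for $\lambda_0<\lambda_c$ the increasing map $G$ has $x^*$ as its unique fixed point, the extremal iterates converge monotonically to $x^*$ and therefore enter, after a number of steps depending only on $\Delta$ and $\lambda_0$, a compact neighborhood of $x^*$ on which $|G'|\leq\kappa<1$; geometric contraction there already yields exponential WSM. The global linearization via a potential is the tool one needs for the WSM-to-SSM upgrade (Theorem~\ref{thm:wsmssm} in the paper, due to Weitz), not for WSM itself. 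Second, your claim that a potential renders $F$ \emph{globally} $|F'(x^*)|$-Lipschitz on the whole iteration range is stronger than what the standard potentials deliver (and stronger than you need); you correctly retreat to ``some $\kappa(\Delta,\lambda_0)<1$'' in the next sentence, and that weaker statement is what should be proved. Finally, the assertion that $G$ has a unique fixed point for $\lambda_0\leq\lambda_c$ and exactly three for $\lambda_0>\lambda_c$ is the real content of the threshold and deserves an argument (a convexity analysis of $G$ or of $\log F$), but it is standard and present in the cited sources.
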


We summarize in the following theorem the main results from \cite{1-weitz}, that relate the correlation decay in $(\mathbb{T}_{\Delta}, \lambda_0)$ with the correlation decay in $\mathcal{H}^\Delta(\lambda_0)$. Here again, as in Theorem \ref{thm21}, the results in \cite{1-weitz} are focused on the constant activity case. However, we can also adapt the results to the non-constant case by considering that the main tool used in \cite{1-weitz} to prove them is \cite[Theorem 4.1]{1-weitz}, which is based on hardcore models with non-constant activity functions.

\begin{theorem}[{\cite[Theorem 2.3 and Theorem 2.4]{1-weitz}}]
\label{thm:wsmssm}
Fix $\Delta \in \mathbb{N}$ and $\lambda_0 > 0$. Then,
\begin{enumerate}
\item If $(\mathbb{T}_\Delta,\lambda_0)$ exhibits WSM with decay rate $\delta$, then $(\mathbb{T}_\Delta,\lambda_0)$ exhibits SSM with rate
$$\frac{(1+\lambda_0)(\lambda_0 + (1+\lambda_0)^{\Delta})}{\lambda_0}\delta;$$
\item If $(\mathbb{T}_\Delta,\lambda_0)$ exhibits SSM with decay rate $\delta$, then $(\Gamma,\lambda)$ exhibits SSM with rate $\delta$ for every $(\Gamma,\lambda) \in \mathcal{H}^\Delta(\lambda_0)$.
\end{enumerate}
\end{theorem}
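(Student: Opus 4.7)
The approach has two parts of differing difficulty, so I would prove the easier part (2) first.

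For part (2), the plan is to lift SSM from $\mathbb{T}_\Delta$ to arbitrary $(\Gamma,\lambda) \in \mathcal{H}^\Delta(\lambda_0)$ via Theorem \ref{thm21}. Given $U \Subset V(\Gamma)$, a vertex $v \in U$, and boundary conditions $y, z \in X(\Gamma)$, Theorem \ref{thm21} identifies the marginal at $v$ on $(\Gamma,\lambda)$ with the marginal at the root $\rho$ of $(T_{\mathrm{SAW}}(\Gamma,v), \overline{\lambda})$ under the lifted conditions $\overline{y}_{W(U)}$ and $\overline{z}_{W(U)}$. Because $\Delta(\Gamma) \leq \Delta$, the SAW tree is a subtree of $\mathbb{T}_\Delta$, so by the subgraph monotonicity of SSM (Proposition \ref{prop:GKssm}), the hypothesized SSM on $\mathbb{T}_\Delta$ with rate $\delta$ transfers to $T_{\mathrm{SAW}}(\Gamma,v)$ with the same rate. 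Finally, any trail from $v$ to $u \in D_U(y,z)$ in $\Gamma$ has length at least $\dist_\Gamma(v,u)$, so $\dist_{T_{\mathrm{SAW}}(\Gamma,v)}(\rho, W(D_U(y,z))) \geq \dist_\Gamma(v, D_U(y,z))$, and since $\delta$ is non-increasing the desired SSM bound on $\Gamma$ follows.

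For part (1), the plan is to reduce SSM to WSM on $\mathbb{T}_\Delta$ by absorbing the agreement portion of the boundary condition into the tree itself. Write $D = D_U(y,z)$ and $A = U^{\rm c} \setminus D$, so that $y$ and $z$ agree on $A$ and differ only on $D$. Exploiting the tree structure of $\mathbb{T}_\Delta$, conditioning on the common values $y|_A = z|_A$ decouples the branches via the Markov property and reduces the analysis to a subtree $T'$ of $\mathbb{T}_\Delta$ containing $v$ on which the remaining boundary condition lies entirely in $D$ (vertices in $A$ assigned $1$ are removed together with the resulting forced $0$'s on their neighbors, while vertices in $A$ assigned $0$ simply sever their branches). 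Because $T'$ is a subtree of $\mathbb{T}_\Delta$, WSM holds on $T'$ with the same rate $\delta$ by subgraph monotonicity; applying WSM inside $T'$ at distance $\dist_{T'}(v,D) \geq \dist_{\mathbb{T}_\Delta}(v,D)$ yields a bound of order $\delta(\dist(v,D))$ for the probability gap at $v$. The final step is to convert this back through the ratio $R_v = \pi([1^v])/\pi([0^v])$ appearing in the tree recursion from the proof of Theorem \ref{thm21}. This is a Lipschitz comparison between the bijection $r \mapsto r/(1+r)$ and the reciprocal bijection $p \mapsto p/(1-p)$, whose constants are governed by the uniform bounds of Lemma \ref{lem:probbound} on $\pi([0^v])$ and $\pi([1^v])$, yielding precisely the factor $\frac{(1+\lambda_0)(\lambda_0 + (1+\lambda_0)^\Delta)}{\lambda_0}$ in the rate.

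The main obstacle is part (1), and specifically two points that require careful bookkeeping rather than a new idea: first, rigorously verifying that conditioning on $y|_A$ reduces the SSM comparison on $\mathbb{T}_\Delta$ to a WSM comparison on a subtree $T'$ (with appropriately modified activities), preserving the marginal at $v$; second, tracking the Lipschitz factor through the ratio-to-probability conversion so that the constant $\frac{(1+\lambda_0)(\lambda_0 + (1+\lambda_0)^\Delta)}{\lambda_0}$ emerges cleanly from Lemma \ref{lem:probbound}. Part (2) is then a direct consequence of part (1) together with Theorem \ref{thm21}, Proposition \ref{prop:GKssm}, and monotonicity of $\delta$.
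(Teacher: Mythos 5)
The paper does not actually prove this statement: it is quoted directly from Weitz (\cite[Theorems 2.3 and 2.4]{1-weitz}), with only a remark that the extension to non-constant activities follows because Weitz's main tool, his Theorem 4.1, already handles non-constant activity functions. So your proposal should be judged as a from-scratch reconstruction of Weitz's argument, and as such it has two genuine gaps, both located exactly where the real work of Weitz's proof lives. First, in part (2): Proposition \ref{prop:GKssm} transfers SSM from a graph to a subgraph \emph{with the same activity function}. The hypothesis gives you SSM for $(\mathbb{T}_\Delta,\lambda_0)$ with \emph{constant} activity $\lambda_0$, but what you need is SSM for $(T_{\mathrm{SAW}}(\Gamma,v),\overline{\lambda})$ where $\overline{\lambda}$ is a non-constant activity merely bounded above by $\lambda_0$ (the family $\mathcal{H}^\Delta(\lambda_0)$ only imposes $\lambda_+\leq\lambda_0$). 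Passing from the constant-activity regular tree to subtrees with smaller, vertex-dependent activities requires a monotonicity-in-activity argument (the content of Weitz's Theorem 4.1, via the contraction/potential-function analysis of the tree recursion); neither Proposition \ref{prop:GKssm} nor anything else in the paper supplies it.

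Second, in part (1): after you condition on the agreement set $A$ and prune $\mathbb{T}_\Delta$ down to the subtree $T'$, you invoke ``WSM holds on $T'$ with the same rate $\delta$ by subgraph monotonicity.'' There is no such monotonicity available: Proposition \ref{prop:GKssm} concerns SSM only, and WSM on an arbitrary pruned subtree under arbitrary partial boundary conditions is essentially the SSM statement you are trying to establish --- this circularity is precisely why the implication WSM~$\Rightarrow$~SSM on trees is nontrivial and why the multiplicative constant $\frac{(1+\lambda_0)(\lambda_0+(1+\lambda_0)^{\Delta})}{\lambda_0}$ appears at all (if your step were valid, part (1) would hold with rate $\delta$ itself). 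The correct route runs the WSM hypothesis through the ratio recursion $R_{T,\lambda}(\rho,\cdot)$ from the proof of Theorem \ref{thm21} and uses the uniform bounds of Lemma \ref{lem:probbound} to convert between decay of ratios and decay of probabilities; your final Lipschitz-conversion paragraph gestures at this correctly, but it cannot repair the unjustified transfer of WSM to $T'$. These are missing ideas, not bookkeeping. (A further minor inconsistency: you close by deriving part (2) from part (1), but part (2) assumes SSM on $\mathbb{T}_\Delta$ outright, so part (1) plays no role there.)
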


Then, combining Proposition \ref{prop:kelly} and Theorem \ref{thm:wsmssm}, we have that if $\lambda_0 \leq \lambda_{c}(\Delta)$, then every hardcore model $(\Gamma,\lambda) \in \mathcal{H}^\Delta(\lambda_0)$ exhibits SSM with the same decay rate $\delta$, that would be exponential if the inequality is strict. In addition, observe that if $(\Gamma,\lambda)$ is a hardcore model such that $(T_{\mathrm{SAW}}(\Gamma,v),\overline{\lambda})$ exhibits SSM with decay rate $\delta$ for every $v \in V(\Gamma)$, then $(\Gamma,\lambda)$ exhibits SSM with decay rate $\delta$ as well. This follows from Theorem \ref{thm21}, since SSM is a property that depends on finitely supported events and the probabilities involved can be translated into probabilities defined on finite hardcore models which at the same time can be translated into events on finite subtrees of $T_{\mathrm{SAW}}(\Gamma,v)$. Considering this, we have the following theorem, which can be understood as a generalization of Theorem \ref{thm21} to the infinite setting.

\begin{theorem}
\label{thm:tsaw}
Given a hardcore model $(\Gamma,\lambda)$ and $v \in V(\Gamma)$ such that $(T_{\mathrm{SAW}}(\Gamma,v),\overline{\lambda})$ exhibits SSM, then for every $x \in X(\Gamma)$ and $U \subseteq V(\Gamma)$,
$$
\Prob_{\Gamma,\lambda}([0^v] \vert [x_U]) = \Prob_{T_{\mathrm{SAW}}(\Gamma,v),\overline{\lambda}}([0^\rho] \vert [\overline{x_U}]).
$$
\end{theorem}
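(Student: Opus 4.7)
The plan is to prove the identity by finite approximation: exhaust $\Gamma$ by finite subgraphs, invoke Theorem \ref{thm21} at each finite level, and pass to the limit using SSM on both sides. Because $\Gamma$ is locally finite, the induced subgraphs $\Gamma_n := \Gamma[B_\Gamma(v,n)]$ are finite and exhaust $\Gamma$; moreover, every self-avoiding trail of length at most $n$ starting from $v$ uses only vertices of $B_\Gamma(v,n)$, so the finite subtrees $T_n := T_{\mathrm{SAW}}(\Gamma_n,v)$ contain the ball $B_T(\rho,n)$ in $T := T_{\mathrm{SAW}}(\Gamma,v)$ and exhaust $T$. Setting $U_n := U \cap V(\Gamma_n)$, Theorem \ref{thm21} applied to the finite hardcore model $(\Gamma_n,\lambda\vert_{V(\Gamma_n)})$ yields, for every $n$,
$$
\Prob_{\Gamma_n,\lambda}([0^v] \vert [x_{U_n}]) = \Prob_{T_n,\overline{\lambda}}([0^\rho] \vert [\overline{x_{U_n}}]).
$$

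Next I would show that each side converges to the corresponding expression in the statement. The paragraph preceding the theorem already notes that SSM on $(T,\overline{\lambda})$ transfers to $(\Gamma,\lambda)$, so both models admit unique Gibbs measures. Using Remark \ref{ssm} to translate the (possibly infinite) event $[x_U]$ into a pure zero-event $[0^W]$ on a suitable enlarged set $W$, together with its tree counterpart $[0^{\overline{W}}]$, Proposition \ref{prop:GKssm} recasts the two sides of the desired identity as the \emph{unconditional} probabilities $\Prob_{\Gamma \setminus W,\lambda}([0^v])$ and $\Prob_{T \setminus \overline{W},\overline{\lambda}}([0^\rho])$, which are well-defined because SSM passes to subgraphs. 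Writing $W_n$ and $\overline{W_n}$ for the analogous zero-sets attached to $x_{U_n}$, it then suffices to verify the convergences $\Prob_{\Gamma_n \setminus W_n,\lambda}([0^v]) \to \Prob_{\Gamma \setminus W,\lambda}([0^v])$ and $\Prob_{T_n \setminus \overline{W_n},\overline{\lambda}}([0^\rho]) \to \Prob_{T \setminus \overline{W},\overline{\lambda}}([0^\rho])$.

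\textbf{Main obstacle.} The hardest step is the simultaneous limit passage in both the underlying graph ($\Gamma_n \nearrow \Gamma$) and the conditioning set ($U_n \nearrow U$). I would handle it by inserting an intermediate quantity $\Prob_{\Gamma,\lambda}([0^v] \vert [x_{U_n}])$ and splitting the convergence into two legs. The first leg, comparing the finite-volume marginal on $\Gamma_n$ with its infinite-volume counterpart at the same conditioning, is controlled by WSM applied across the boundary $\partial B_\Gamma(v,n)$ lying at distance $n$ from $v$. The second leg, comparing two infinite-volume conditional probabilities differing only on $U \setminus U_n \subseteq V \setminus B_\Gamma(v,n)$, is controlled by SSM through the same distance bound. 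Both discrepancies are dominated by $\delta(n) \to 0$, and the identical argument on the tree side --- which relies on the SSM hypothesis of $(T,\overline{\lambda})$ stated in the theorem --- closes the proof.
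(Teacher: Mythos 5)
Your proposal is correct and follows essentially the same route as the paper: truncate to balls $B_\Gamma(v,\ell)$, apply Theorem \ref{thm21} to the resulting finite hardcore models, and use SSM (on the tree, which transfers to $\Gamma$) to control the limit. The paper packages your two ``legs'' into a single estimate by conditioning on the all-zero configuration on $\partial B_\Gamma(v,\ell)\setminus U$ and averaging over boundary configurations, but this is the same mechanism --- Markov property plus Proposition \ref{prop:GKssm} to identify conditioned infinite-volume marginals with free finite-volume ones, and a $\delta(\ell)$ error from spatial mixing at distance $\ell$.
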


\begin{proof}
Assume that $(T_{\mathrm{SAW}}(\Gamma,v),\overline{\lambda})$ exhibits SSM with decay rate $\delta$. Then, for every $\ell \in \mathbb{N}$,
\begin{align*}
\Prob_{\Gamma,\lambda}([0^v] \vert [x_U])	&	=	\sum_{w \in \{0,1\}^{\partial B_\Gamma(v,\ell) \setminus U}} \Prob_{\Gamma,\lambda}([0^v] \vert [x_Uw])\Prob_{\Gamma,\lambda}([w] \vert [x_U])	\\
									&	\leq	\sum_{w \in \{0,1\}^{\partial B_\Gamma(v,\ell) \setminus U}} (\Prob_{\Gamma,\lambda}([0^v] \vert [x_U0^{\partial B_\Gamma(v,\ell) \setminus U}]) + \delta(\ell))\Prob_{\Gamma,\lambda}([w] \vert [x_U])	\\
									&	=	\Prob_{\Gamma,\lambda}([0^v] \vert [x_U0^{\partial B(v,\ell) \setminus U}]) + \delta(\ell)
\end{align*}
and, similarly,
\begin{align*}
\Prob_{\Gamma,\lambda}([0^v] \vert [x_U])	&	\geq	\Prob_{\Gamma,\lambda}([0^v] \vert [x_U0^{\partial B_\Gamma(v,\ell) \setminus U}]) - \delta(\ell).
\end{align*}

Therefore, since $\lim_{\ell \to \infty} \delta(\ell) = 0$,
$$
\Prob_{\Gamma,\lambda}([0^v] \vert [x_U]) = \lim_{\ell \to \infty} \Prob_{\Gamma,\lambda}([0^v] \vert [x_U0^{\partial B_\Gamma(v,\ell) \setminus U}]),
$$
and, by the same argument,
$$
\Prob_{(T_{\mathrm{SAW}}(\Gamma,v),\overline{\lambda})}([0^\rho] \vert [\overline{x_U}]) = \lim_{\ell \to \infty} \Prob_{(T_{\mathrm{SAW}}(\Gamma,v),\overline{\lambda})}([0^\rho] \vert [\overline{x_U}0^{\partial B_{T_{\mathrm{SAW}}(\Gamma,v)}(\rho,\ell) \setminus W(U)}]).
$$

Considering this, the Markov random field property, and Proposition \ref{prop:GKssm}, we have that
\begin{align*}
\Prob_{\Gamma,\lambda}([0^v] \vert [x_U])	&	=	\lim_{\ell \to \infty} \Prob_{\Gamma,\lambda}([0^v] \vert [x_U0^{\partial B_\Gamma(v,\ell) \setminus U}])	\\
									&	=	\lim_{\ell \to \infty} \Prob_{\Gamma \cap B(v,\ell),\lambda}([0^v] \vert [x_{U \cap B_\Gamma(v,\ell)}])	\\
									&	= 	\lim_{\ell \to \infty} \Prob_{T_{\mathrm{SAW}}(\Gamma \cap B_{T_{\mathrm{SAW}}(\Gamma,v)}(v,\ell),v),\overline{\lambda}}([0^\rho] \vert [\overline{x_{U \cap B_\Gamma(v,\ell)}}])	\\
									&	= 	\lim_{\ell \to \infty} \Prob_{T_{\mathrm{SAW}}(\Gamma,v),\overline{\lambda}}([0^\rho] \vert [\overline{x_U}0^{\partial B_{T_{\mathrm{SAW}}(\Gamma,v)}(\rho,\ell) \setminus W(U)}])		\\
									&	=	\Prob_{T_{\mathrm{SAW}}(\Gamma,v),\overline{\lambda}}([0^\rho] \vert [\overline{x_U}]).
\end{align*}
\end{proof}

Notice that Theorem \ref{thm:tsaw} requires that $(T_{\mathrm{SAW}}(\Gamma,v),\overline{\lambda})$ exhibits SSM rather than the graph $(\Gamma,\lambda)$, since SSM on $T_{\mathrm{SAW}}(\Gamma,v)$ may be a stronger condition than SSM on $\Gamma$. A key fact is that if $(\mathbb{T}_\Delta, \lambda_0)$ exhibits SSM, then $(T, \lambda_0)$ exhibits SSM for every subtree $T$ of $\mathbb{T}_\Delta$. Then, since for every $\Gamma$ with $\Delta(\Gamma) \leq \Delta$, we have that $T_{\mathrm{SAW}}(\Gamma,v)$ is a subtree of $\mathbb{T}_\Delta$, it follows that $(T_{\mathrm{SAW}}(\Gamma,v),\overline{\lambda})$, and therefore $(\Gamma,\lambda_0)$, exhibit SSM. Considering this, we have the following corollary.

\begin{corollary}
Fix $\Delta \in \mathbb{N}$. Then, every $(\Gamma,\lambda) \in \mathcal{H}^\Delta(\lambda_c(\Delta))$ exhibits SSM and for every $v \in V(\Gamma)$, $x \in X(\Gamma$), and $U \subseteq V(\Gamma)$,
$$
\Prob_{\Gamma,\lambda}([0^v] \vert [x_U]) = \Prob_{T_{\mathrm{SAW}}(\Gamma,v),\overline{\lambda}}([0^\rho] \vert [\overline{x_U}]).
$$
\end{corollary}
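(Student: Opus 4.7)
The plan is to assemble the corollary directly from the three ingredients that have just been laid out: the Kelly--Spitzer threshold for WSM on the regular tree (Proposition~\ref{prop:kelly}), the WSM-to-SSM and SSM-transfer results from Weitz (Theorem~\ref{thm:wsmssm}), and the infinite version of the self-avoiding walk identity (Theorem~\ref{thm:tsaw}). There is essentially no new work; the main point is to verify that the tree of self-avoiding walks satisfies the hypotheses of Theorem~\ref{thm:tsaw}.

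First I would establish SSM for the base case $(\mathbb{T}_\Delta,\lambda_c(\Delta))$. By Proposition~\ref{prop:kelly}, since $\lambda_c(\Delta)\leq\lambda_c(\Delta)$, $(\mathbb{T}_\Delta,\lambda_c(\Delta))$ exhibits WSM with some decay rate $\delta$. Applying Theorem~\ref{thm:wsmssm}(1), $(\mathbb{T}_\Delta,\lambda_c(\Delta))$ then exhibits SSM with decay rate $\tfrac{(1+\lambda_c(\Delta))(\lambda_c(\Delta)+(1+\lambda_c(\Delta))^\Delta)}{\lambda_c(\Delta)}\delta$. Call this rate $\delta^\ast$.

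Next, fix $(\Gamma,\lambda)\in\mathcal{H}^\Delta(\lambda_c(\Delta))$ and $v\in V(\Gamma)$. By Theorem~\ref{thm:wsmssm}(2), $(\Gamma,\lambda)$ exhibits SSM with rate $\delta^\ast$, which gives the first statement of the corollary. To get the SAW identity I need SSM on $(T_{\mathrm{SAW}}(\Gamma,v),\overline{\lambda})$, not merely on $(\Gamma,\lambda)$. Here the key observation, already flagged in the paragraph preceding the corollary, is that $T_{\mathrm{SAW}}(\Gamma,v)$ is a subtree of $\mathbb{T}_\Delta$ (because $\Delta(\Gamma)\leq\Delta$ constrains the branching of the SAW construction), and $\overline{\lambda}(w)=\lambda(u)\leq\lambda_c(\Delta)$ for every trail $w$ ending at $u$. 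Hence $(T_{\mathrm{SAW}}(\Gamma,v),\overline{\lambda})\in\mathcal{H}^\Delta(\lambda_c(\Delta))$, so Theorem~\ref{thm:wsmssm}(2) applied again yields SSM for $(T_{\mathrm{SAW}}(\Gamma,v),\overline{\lambda})$ with the same rate $\delta^\ast$.

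With SSM on the SAW tree in hand, Theorem~\ref{thm:tsaw} applies verbatim and gives
\[
\Prob_{\Gamma,\lambda}([0^v]\mid [x_U])=\Prob_{T_{\mathrm{SAW}}(\Gamma,v),\overline{\lambda}}([0^\rho]\mid[\overline{x_U}])
\]
for every $x\in X(\Gamma)$ and $U\subseteq V(\Gamma)$, completing the proof. The only non-routine point is the verification that the SAW tree sits inside $\mathcal{H}^\Delta(\lambda_c(\Delta))$, and this is immediate from the bounded-degree construction of $T_{\mathrm{SAW}}(\Gamma,v)$ and the definition of $\overline{\lambda}$; I do not anticipate any real obstacle.
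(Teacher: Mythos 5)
Your proof is correct and follows essentially the same route as the paper: Kelly--Spitzer (Proposition~\ref{prop:kelly}) gives WSM on $(\mathbb{T}_\Delta,\lambda_c(\Delta))$, Theorem~\ref{thm:wsmssm} upgrades this to SSM and transfers it to every member of $\mathcal{H}^\Delta(\lambda_c(\Delta))$ --- in particular to $(T_{\mathrm{SAW}}(\Gamma,v),\overline{\lambda})$, since the SAW tree is a subtree of $\mathbb{T}_\Delta$ with activities bounded by $\lambda_c(\Delta)$ --- and then Theorem~\ref{thm:tsaw} yields the identity. Your explicit verification that the SAW tree lies in $\mathcal{H}^\Delta(\lambda_c(\Delta))$ is exactly the ``key fact'' the paper invokes in the paragraph preceding the corollary.
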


Since we are ultimately interested in studying the interplay between the SSM property on $T_{\mathrm{SAW}}(\Gamma,v)$ and $\Gamma$, we may wonder whether is really necessary to have control over the full $\Delta$-regular tree $\mathbb{T}_\Delta$. In \cite{1-sinclair}, a refinement of this fact was proved by considering the \emph{connective constant} of the graphs involved.

\subsection{Connective constant}

Given a graph $\Gamma$, a vertex $v$, and an integer $k$, let $N_\Gamma(v,k)$ denote the \emph{number of self-avoiding walks} in $\Gamma$ of length $k$ starting from $v$. Following \cite{1-sinclair}, we consider the {\bf connective constant} $\mu(\mathcal{G})$ of a family of finite graphs $\mathcal{G}$. Here, $\mu(\mathcal{G})$ is defined as the infimum over all $\mu > 0$ for which there exist $a,c > 0$ such that for any $\Gamma \in \mathcal{G}$ and any $v \in V(\Gamma)$, it holds that $\sum^\ell_{k=1} N_\Gamma(v,k) \leq c\mu^\ell$ for all $\ell \geq a\log\lvert V(\Gamma) \rvert$. This definition extends the more usual definition of connective constant for a single infinite almost transitive graph $\Gamma$, which is given by
$$
\mu(\Gamma) := \max_{v \in V(\Gamma)} \lim_{\ell \to \infty} N_\Gamma(v,\ell)^{1/\ell}.
$$

Indeed, if $\Gamma$ is almost transitive, then $\mu(\Gamma) = \mu(\mathcal{G}(\Gamma))$, where $\mathcal{G}(\Gamma)$ denotes the family of finite subgraphs of $\Gamma$. Notice that $\mu(\Gamma)$ exists due to Fekete's lemma and that, if $\Gamma$ is connected, then $\mu(\Gamma) = \lim_{\ell \to \infty} N_\Gamma(v,\ell)^{1/\ell}$ for arbitrary $v$. Roughly, the connective constant measures the growth rate of the number of self-avoiding walks according to their length or, equivalently, the \emph{branching} of $T_{\mathrm{SAW}}(\Gamma,v)$. In general, it is not an easy task to compute $\mu(\Gamma)$ (e.g., see \cite{1-duminil}).

Considering this, we extend the definition of strong spatial mixing to families of graphs as follows: Given a family of graphs $\mathcal{G}$ and a family of activity functions $\Lambda = \{\lambda^\Gamma\}_{\Gamma \in \mathcal{G}}$ with $\lambda^\Gamma: V(\Gamma) \to  \mathbb{R}_{>0}$, we say that $(\mathcal{G}, \Lambda)$ satisfies strong spatial mixing if there exists a decay rate function $\delta: \mathbb{N} \to  \mathbb{R}_{\geq 0}$ such that $\lim_{\ell \to \infty} \delta(\ell) = 0$ and for all $\Gamma \in \mathcal{G}$, for all $U \Subset V(\Gamma)$, $v \in U$, and $y,z \in X(\Gamma)$,
$$
\lvert \pi^y_{\Gamma, U}([0^v]) - \pi^z_{\Gamma, U}([0^v]) \rvert \leq \delta(\dist_\Gamma(v,D_U(y,z))),
$$
where $\pi^y_{\Gamma, U}$ denotes the specification element corresponding to the hardcore model $(\Gamma, \lambda^\Gamma)$. We translate into this language the following result from \cite{1-sinclair}.

\begin{theorem}[\cite{1-sinclair}]
\label{thm:sinclair}
Let $\mathcal{G}$ be a family of almost transitive locally finite graphs and $\Lambda = \{\lambda^{\Gamma}\}_{\Gamma \in \mathcal{G}}$ a set of activity functions such that
$$
\sup_{\Gamma \in \mathcal{G}} \lambda^\Gamma_+ < \lambda_{c}(\mu(\mathcal{G})+1).
$$ 

Then, $(\mathcal{G}, \Lambda)$ exhibits exponential SSM.
\end{theorem}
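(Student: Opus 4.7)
The plan is to extend Weitz's tree-of-self-avoiding-walks contraction argument from regular trees to the general setting, using the connective constant of $\mathcal{G}$ to control growth instead of a crude $\Delta$-regular bound. First, I fix $\Gamma \in \mathcal{G}$ and $v \in V(\Gamma)$, and pass to $T_{\mathrm{SAW}}(\Gamma,v)$ via Theorem \ref{thm21}. The task reduces to bounding the sensitivity of $\Prob_{T_{\mathrm{SAW}}(\Gamma,v),\overline{\lambda}}([0^\rho]\mid[\overline{x_U}])$ to perturbations of $\overline{x_U}$ at depth $\geq\ell$ from the root, uniformly over $\Gamma\in\mathcal{G}$ and over the boundary conditions.

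Second, I would work with the log-ratios $y_w := \log R_{T_w,\overline{\lambda}}(w,\cdot)$ appearing in the proof of Theorem \ref{thm21}, so that the recursion reads $y_w = \log\overline{\lambda}(w) - \sum_i \log(1+e^{y_{w_i}})$, with the sum taken over children $w_i$ of $w$ in the SAW tree. The central device is to introduce a strictly monotone ``potential'' $\Phi$ and study the induced recursion for $\widetilde y_w := \Phi(y_w)$. Choosing $\Phi$ as the Weitz potential and using the assumption $\sup_\Gamma \lambda^\Gamma_+ < \lambda_c(\mu(\mathcal{G})+1)$, a direct but delicate computation shows that the partial derivative of the recursion with respect to each $\widetilde y_{w_i}$ is bounded in absolute value by some $\alpha_i>0$ with $\sum_i \alpha_i$ strictly less than $1/\mu(\mathcal{G})$, uniformly in the values of $y_{w_i}$ and over all nodes $w$.

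Third, I would telescope the one-step contraction along the SAW tree. Since $\Phi'$ is bounded above and below on the range of $y_w$ (this range is controlled by Lemma \ref{lem:probbound} applied to the subtree hardcore model), the sensitivity at $\rho$ to a perturbation at a depth-$\ell$ leaf is bounded by a product of $\ell$ per-edge factors, whose geometric mean is strictly smaller than $\mu(\mathcal{G})^{-1}$. Summing over depth-$\ell$ leaves of $T_{\mathrm{SAW}}(\Gamma,v)$, which are in bijection with self-avoiding walks of length $\ell$ from $v$ in $\Gamma$, the total influence is at most $N_\Gamma(v,\ell)\cdot \gamma^\ell$ for some $\gamma<1/\mu(\mathcal{G})$. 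By definition of the connective constant, $N_\Gamma(v,\ell)\leq c\,\mu(\mathcal{G})^\ell$ for $\ell\geq a\log|V(\Gamma)|$; for smaller $\ell$, the uniform degree bound coming from almost transitivity gives a coarser but still geometric bound that is absorbed into the constants. Translating back through Theorem \ref{thm21} yields $|\pi^y_U([0^v])-\pi^z_U([0^v])|\leq C\beta^{\dist_\Gamma(v,D_U(y,z))}$ with constants uniform over $(\mathcal{G},\Lambda)$ and $\beta<1$, which is the claimed exponential SSM.

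The main obstacle is the construction and verification of the potential $\Phi$ realizing a sharp contraction at the threshold $\lambda_c(\mu(\mathcal{G})+1)$: this is a one-dimensional but delicate optimization that lies at the heart of Sinclair et al.'s refinement, and the extra ``$+1$'' in $\mu(\mathcal{G})+1$ is the price of needing a contraction uniform in the true arities of the SAW tree (which can exceed $\mu(\mathcal{G})$ at individual nodes) while only the amortized growth is governed by $\mu(\mathcal{G})$. A secondary issue is handling the ``small-$\ell$'' regime where the defining inequality for the connective constant does not yet apply, but this is purely a matter of bookkeeping and does not require new ideas.
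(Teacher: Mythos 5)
The paper does not prove this statement: Theorem \ref{thm:sinclair} is imported verbatim from \cite{1-sinclair}, merely translated into the paper's notation, so there is no in-paper proof to compare against. Your proposal is an outline of the argument of the cited source itself (Weitz's SAW-tree reduction, a potential function to linearize the tree recursion, and the connective constant to count the walks that carry the influence), so at the level of strategy it is the right, and indeed the only known, route.

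As a proof, however, it has a genuine gap, and one internal inconsistency. The gap: the entire analytic content of the theorem is the construction of the potential $\Phi$ and the verification that the induced recursion contracts \emph{at the threshold} $\lambda_c(\mu(\mathcal{G})+1)$; you defer this to ``a direct but delicate computation,'' which is precisely the part of \cite{1-sinclair} that cannot be waved away --- without it nothing distinguishes the claimed bound from the cruder $\lambda_c(\Delta)$ bound of Theorem \ref{thm:wsmssm}. The inconsistency: in your second paragraph you assert a \emph{per-node} bound $\sum_i \alpha_i < 1/\mu(\mathcal{G})$ uniformly over all nodes $w$. This is not what holds and not what the cited argument establishes: a node of the SAW tree can have arity up to $\Delta-1 \gg \mu(\mathcal{G})$, and for $\lambda$ near $\lambda_c(\mu(\mathcal{G})+1)$ the sum of the derivative bounds at such a node exceeds $1/\mu(\mathcal{G})$ (if the per-node bound were true, expanding the product of sums would give total depth-$\ell$ influence at most $(1/\mu)^\ell$ and the SAW count $N_\Gamma(v,\ell)$ would never need to enter). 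What is actually proved is an \emph{amortized} bound: the product of per-edge factors along any single root-to-leaf path of length $\ell$ has geometric mean below $1/\mu(\mathcal{G})$, and this is then paired with $N_\Gamma(v,\ell)\le c\,\mu(\mathcal{G})^\ell$. Your third paragraph states this correctly, but it contradicts the second; the proof should be written entirely in the path-amortized form, and the amortization (trading a bad factor at a high-arity node against the good factors accumulated along the rest of the path) is itself a nontrivial step that your sketch does not supply.
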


Notice that if a graph has maximum degree $\Delta$, then $\mu(\Gamma) \leq \Delta-1$. In addition, observe that $N_\Gamma(v,\ell) = N_{T_{\mathrm{SAW}}(\Gamma,v)}(\rho,\ell)$. We have the following corollary.

\begin{corollary}
If $(\Gamma,\lambda)$ is a hardcore model such that
$$
\lambda_+ < \lambda_c(\mu(\Gamma)+1),
$$
then $(T_{\mathrm{SAW}}(\Gamma,v),\overline{\lambda})$ exhibits (exponential) SSM for every $v \in V(\Gamma)$. In particular, $(\Gamma,\lambda)$ exhibits (exponential) SSM and for every $v \in V(\Gamma)$, $x \in X(\Gamma$), and $U \subseteq V(\Gamma)$,
$$
\Prob_{\Gamma,\lambda}([0^v] \vert [x_U]) = \Prob_{T_{\mathrm{SAW}}(\Gamma,v),\overline{\lambda}}([0^\rho] \vert [\overline{x_U}]).
$$
\end{corollary}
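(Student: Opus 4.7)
The plan is to first establish exponential SSM for the hardcore model $(T_{\mathrm{SAW}}(\Gamma,v),\overline{\lambda})$ and then transfer the resulting conditional-probability identity back to $(\Gamma,\lambda)$ via Theorem \ref{thm:tsaw}. The crucial ingredient will be a bound on the growth of self-avoiding walks in $T_{\mathrm{SAW}}(\Gamma,v)$ in terms of $\mu(\Gamma)$, which places us in position to invoke Theorem \ref{thm:sinclair}.

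First, I would observe that $T_{\mathrm{SAW}}(\Gamma,v)$ is a tree, so every self-avoiding walk starting at the root $\rho$ is simply a downward path. By the identity $N_\Gamma(v,\ell) = N_{T_{\mathrm{SAW}}(\Gamma,v)}(\rho,\ell)$ quoted just before the corollary, the growth rate of such walks from $\rho$ is at most $\mu(\Gamma)$. More generally, if $w$ is a vertex of the tree corresponding to a trail in $\Gamma$ ending at some $u \in V(\Gamma)$, then self-avoiding walks in $T_{\mathrm{SAW}}(\Gamma,v)$ starting at $w$ project, by reading off endpoints of trails, to self-avoiding walks in $\Gamma$ starting at $u$, so their number of length $\ell$ is bounded by $\max_{u' \in V(\Gamma)} N_\Gamma(u',\ell)$. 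Consequently the family of finite balls $\mathcal{G}_v := \{B_{T_{\mathrm{SAW}}(\Gamma,v)}(\rho,\ell) : \ell \in \N\}$, each endowed with the restriction of $\overline{\lambda}$, has connective constant at most $\mu(\Gamma)$.

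Next, since $\overline{\lambda}_+ = \lambda_+ < \lambda_c(\mu(\Gamma)+1)$, an application of (the finite-graph version of) Theorem \ref{thm:sinclair} to $\mathcal{G}_v$ produces exponential SSM on every truncation with a uniform decay rate $\delta$. I would then push SSM from the truncations to the full infinite tree: SSM is a statement about finitely supported events $[0^{w}]$ and boundary differences on a finite set $D_U(y,z)$, so each relevant inequality takes place inside a sufficiently large finite ball, and by the Markov random field property the finite-volume specification on such a ball agrees with the restriction of $\pi_{T_{\mathrm{SAW}}(\Gamma,v),\overline{\lambda}}$. Taking $\ell \to \infty$ yields exponential SSM for $(T_{\mathrm{SAW}}(\Gamma,v),\overline{\lambda})$ with the same decay rate.

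With SSM of $(T_{\mathrm{SAW}}(\Gamma,v),\overline{\lambda})$ in hand for every $v$, Theorem \ref{thm:tsaw} immediately delivers the advertised identity $\Prob_{\Gamma,\lambda}([0^v] \vert [x_U]) = \Prob_{T_{\mathrm{SAW}}(\Gamma,v),\overline{\lambda}}([0^\rho] \vert [\overline{x_U}])$ for arbitrary $v$, $U$, and $x$, and exponential SSM of $(\Gamma,\lambda)$ itself follows from the remark preceding Theorem \ref{thm:tsaw} that SSM on every tree of self-avoiding walks transfers to SSM on $\Gamma$. The main obstacle I anticipate is Step 2: carefully checking the hypotheses of Theorem \ref{thm:sinclair} for the family $\mathcal{G}_v$, in particular verifying that the constants $a,c$ in the connective-constant definition can be chosen uniformly across all truncations, and arguing that the wording ``almost transitive'' in Theorem \ref{thm:sinclair} does not obstruct the application, since what is really used in the SSM argument of \cite{1-sinclair} is exactly the uniform-in-$w$ growth bound on $N_{T_{\mathrm{SAW}}(\Gamma,v)}(w,\ell)$ established in the first step.
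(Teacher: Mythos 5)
Your route is the one the paper intends: bound the connective constant of the trees of self-avoiding walks by $\mu(\Gamma)$, invoke Theorem \ref{thm:sinclair} to get exponential SSM on $(T_{\mathrm{SAW}}(\Gamma,v),\overline{\lambda})$, transfer SSM back to $(\Gamma,\lambda)$ via the remark preceding Theorem \ref{thm:tsaw}, and conclude the marginal identity from Theorem \ref{thm:tsaw}. The paper gives no written proof beyond the observation $N_\Gamma(v,\ell)=N_{T_{\mathrm{SAW}}(\Gamma,v)}(\rho,\ell)$, so your write-up is if anything more careful than the source.

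One step, however, is justified incorrectly. You claim that a self-avoiding walk in $T_{\mathrm{SAW}}(\Gamma,v)$ starting at an interior vertex $w$ (a trail ending at $u$) ``projects, by reading off endpoints of trails, to a self-avoiding walk in $\Gamma$ starting at $u$.'' This is false: a path in the tree that first ascends toward the root and then descends into a sibling branch can revisit a vertex of $\Gamma$. For instance, with $V(\Gamma)=\{a,b,c,d\}$ and $E$ generated by $(a,b),(b,c),(b,d),(c,d)$, the tree path $(a,b,c)\to(a,b)\to(a,b,d)\to(a,b,d,c)$ reads off $c,b,d,c$, which repeats $c$. So the bound $N_{T_{\mathrm{SAW}}(\Gamma,v)}(w,\ell)\leq\max_{u'}N_\Gamma(u',\ell)$ does not follow as stated. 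The correct argument decomposes a path from $w$ in the tree as ``ascend $j$ steps to an ancestor $w'$, then descend'': each descending segment of length $m$ from $w'$ is a self-avoiding extension of the trail $w'$, hence is counted by $N_\Gamma(u',m)$ (up to a factor $\Delta$ for the cycle-closing leaves in $W_1$), and summing over the at most $\ell+1$ choices of turning point gives $N_{T_{\mathrm{SAW}}(\Gamma,v)}(w,\ell)\leq c'(\ell+1)\max_{u'}\sum_{m\leq\ell}N_\Gamma(u',m)$. The polynomial prefactor is harmless for the connective constant (it is absorbed into the constants $a,c$ of the definition, using that $\Gamma$ is almost transitive so the SAW counts $N_\Gamma(u',m)$ are uniformly $O(\mu^m)$ for any $\mu>\mu(\Gamma)$), so your conclusion stands once this step is repaired; the rest of the argument, including the passage from truncations to the infinite tree and the final appeal to Theorem \ref{thm:tsaw}, is fine.
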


\section{Orders}
\label{sec6}

We have already explored the main combinatorial and measure-theoretical tools that we require to establish the main results. In this section, we present some concepts of a more group-theoretical nature, namely, our ability to order a given group.

\subsection{Orderable groups}

Let $\prec$ be a strict total order on $G$. We say that $\prec$ is an {\bf invariant (right) order} if, for all $h_1,h_2,g \in G$,
$$
h_1 \prec h_2 \implies h_1g \prec h_2g.
$$

We call the pair $(G,\prec)$ a {\bf (linearly) ordered group}. The associated {\bf algebraic past} of an ordered group $(G,\prec)$  is the set $\Phi_\prec := \{g \in G: g \prec 1_G\}$ and it is a semigroup which satisfies that
$$
G = \Phi_\prec \sqcup \{\iden\} \sqcup \Phi_\prec^{-1}.
$$

Notice that $h \prec g \iff hg^{-1} \in \Phi_\prec$, so $\Phi_\prec$ fully determines $\prec$ and vice versa.

The class of orderable groups contains all torsion-free abelian groups and it is closed under the operations of taking subgroups, and forming extensions, arbitrary direct products, and directed unions.

A group $G$ is called {\bf virtually orderable (resp. ordered)} if there exists an orderable (resp. ordered) subgroup $H \leq G$ of finite index $[G:H]$. Notice that if $G \acts \Gamma$ is almost transitive and free with fundamental domain $U_0$, then $H \acts \Gamma$ is also almost transitive and free with fundamental domain $KU_0$, where $K \in \F(G)$ is any finite set of representatives. In particular, $\lvert \Gamma/H \rvert = \lvert \Gamma/G \rvert[G:H]$. For this reason, since we are interested in almost transitive actions, there is no loss of generality if, given a virtually orderable group, we assume that it is just orderable: the free energies $f_G(\Gamma,\lambda)$ and $f_H(\Gamma,\lambda)$ will be equal and the only effect of passing to a finite-index subgroup will be that the size of the fundamental domain of the action is multiplied by a constant factor (i.e., the index of the subgroup $[G:H]$). This last point is relevant, since the size of the fundamental domain will play a role later when measuring the computational complexity of some problems.

Given a finitely generated group $G$, a generating set $S$, and its corresponding Cayley graph $\Gamma = \Cay(G,S)$, we define the volume growth function as $g_\Gamma(n) = \lvert B_\Gamma(1_G,n) \rvert$. We say that $G$ has {\bf polynomial growth} if $g_\Gamma(n) \leq p(n)$ for some polynomial $p$. It is well-known that groups with polynomial growth are amenable and a classic result due to Gromov asserts that they are virtually nilpotent \cite{1-gromov}. Without further detail, from Schreier's Lemma, it is also well-known that finite index subgroups of finitely generated groups are also finitely generated \cite[Proposition 4.2]{1-lyndon} and finitely generated nilpotent groups have a torsion-free nilpotent subgroup with finite index \cite[Proposition 2]{1-segal}. From this, and since torsion-free nilpotent groups are orderable \cite[p.37]{1-mura}, it follows that any finitely generated group $G$ with polynomial growth is amenable and virtually orderable. In particular, all our results that apply to amenable and virtually orderable groups will hold for groups of polynomial growth, but they will also hold in groups of super-polynomial---namely, exponential---growth. This includes solvable groups that are not virtually nilpotent \cite{1-milnor} and, more concretely, cases like the Baumslag-Solitar groups $\mathrm{BS}(1,n)$, that can also be ordered. On the other hand, not every amenable group is virtually orderable; for example, the direct sum of countably many non-trivial finite groups always results in a countable group that is amenable but not virtually orderable. In order to address these cases, we introduce a randomized generalization of invariant orders.

\subsection{Random orders}

Consider now the set of relations $\{0,1\}^{G \times G}$ endowed with the product topology and the closed subset $\mathrm{Ord}(G)$ of strict total orders $\prec$ on $G$. We will consider the action $G \acts \mathrm{Ord}(G)$ given by
$$
h_1(g ~\cdot \prec)h_2 \iff (h_1g)\prec(h_2g)
$$
for $h_1,h_2,g \in G$ and $\prec \in \mathrm{Ord}(G)$. An {\bf invariant random order} on $G$ is a $G$-invariant Borel probability measure on $\mathrm{Ord}(G)$. Notice that a fixed point for the action $G \acts \mathrm{Ord}(G)$ corresponds to a (deterministic) invariant order on $G$. The space of invariant random orders will be denoted by $\mathcal{M}_G(\mathrm{Ord}(G))$.

Invariant random orders were introduced in \cite{1-alpeev} in order to answer problems about predictability in topological dynamics through what they called the \emph{Kieffer-Pinsker formula} for the Kolmogorov-Sinai entropy of a group action.

Now, as in the deterministic case, we can also define a notion of past for the group. An {\bf invariant random past} on $G$ is a random function $\tilde{\Phi}: G \to \{0,1\}^G$ or, equivalently, a Borel probability measure on $(\{0,1\}^G)^G$ that satisfies, for almost every instance of $\tilde{\Phi}$, the following properties:
\begin{enumerate}
\item for all $g \in G$, the condition $g \notin \tilde{\Phi}(g)$ holds;
\item for all $g,h \in G$, if $g \in \tilde{\Phi}(h)$, then $\tilde{\Phi}(g) \subseteq \tilde{\Phi}(h)$;
\item if $g \neq h$, then either $g \in \tilde{\Phi}(h)$ or $h \in \tilde{\Phi}(g)$; and
\item for all $g \in G$, the random subsets $\tilde{\Phi}(g)$ and $\tilde{\Phi}(\iden)g$ have the same distribution.
\end{enumerate}

Notice that if $\prec$ is an invariant random order, then the random function $g \mapsto \{h \in G: h \prec g\}$ defines an invariant random past.

In contrast to deterministic invariant orders, every countable group $G$ admits at least one invariant random total order. Namely, consider the random process $(\chi_g)_{g \in G}$ of independent random variables such that each $\chi_g$ has uniform distribution on $[0,1]$. This process is invariant and each realization of it induces an order on $G$ almost surely. We call such order the \emph{uniform random order}.

\section{Counting}
\label{sec7}

From now on, given $(\Gamma,\lambda) \in \mathcal{H}_G$, we always assume that there is some (or any) fixed fundamental domain $U_0$ for $G \acts \Gamma$ and we introduce the auxiliary function $\phi_\lambda: X(\Gamma) \to \mathbb{R}$ given by
$$
\phi_\lambda(x) = \frac{1}{\lvert \Gamma / G \rvert}\sum_{v \in U_0} x(v)\log\lambda(v).
$$

\subsection{A pointwise Shannon-McMillan-Breiman type theorem}

The next theorem establishes a pointwise Shannon-McMillan-Breiman type theorem for Gibbs measures (related results can be found in \cite{1-gurevich} and \cite{2-briceno}). In order to prove it we use the Pointwise Ergodic Theorem \cite{1-lindenstrauss}, which requires F{\o}lner sequence $\{F_n\}_n$ to be \emph{tempered}, a technical condition that is satisfied by every F{\o}lner sequence up to a subsequence and that we will assume without further detail. 

\begin{theorem}
\label{thm:smb}
Let $G$ be a countable amenable group. For every $(\Gamma,\lambda) \in \mathcal{H}_G$ and every $\Prob \in \mathcal{M}_{\mathrm{Gibbs}}(\Gamma,\lambda)$,
$$
\lim_n \left[ - \frac{1}{\lvert F_nU_0 \rvert} \log \Prob([x_{F_nU_0}]) \right] = -\int{\phi_\lambda} d\mathbb{Q} + f_G(\Gamma,\lambda) \quad \mathbb{Q}(x)\text{-a.s. in } x,
$$
for any tempered F{\o}lner sequence $\{F_n\}_n$ and any $\mathbb{Q} \in \mathcal{M}_G^{\mathrm{erg}}(X(\Gamma))$.
\end{theorem}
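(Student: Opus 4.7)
The plan is to combine the DLR characterization of $\Prob$ with Lindenstrauss' pointwise ergodic theorem for amenable group actions. Setting $W:=F_nU_0$ and taking $U'=U=W$ in the DLR equation defining Gibbs measures, then integrating against $\Prob$, yields the decomposition
$$
\Prob([x_W]) \;=\; \weight_\lambda(x,W)\cdot\int_{\{y:\,y_{W^c}\text{ compat.\ with }x_W\}} \frac{d\Prob(y)}{Z^y_\Gamma(W,\lambda)}.
$$
Thus, after dividing by $|W|$, the problem splits into the asymptotic analysis of three pieces: the $\log$-weight, the conditional partition function $Z^y_\Gamma(W,\lambda)$, and the probability $\Prob(\mathrm{compat})$ of the integration event.

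For the log-weight, since $G\acts\Gamma$ is free one has the disjoint decomposition $W=\bigsqcup_{g\in F_n}gU_0$, so the $G$-invariance of $\lambda$ gives
$$
\frac{1}{|W|}\log\weight_\lambda(x,W) \;=\; \frac{1}{|F_n|}\sum_{g\in F_n}\phi_\lambda(g^{-1}\cdot x).
$$
As $\Q$ is $G$-ergodic and $\{F_n\}_n$ is tempered, Lindenstrauss' pointwise ergodic theorem shows that this quantity converges to $\int\phi_\lambda\,d\Q$ for $\Q$-a.e.\ $x$.

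The key technical step is to show that $\frac{1}{|W|}\log Z^y_\Gamma(W,\lambda)\to f_G(\Gamma,\lambda)$ \emph{uniformly in $y$}. The upper bound $Z^y_\Gamma(W,\lambda)\le Z_\Gamma(W,\lambda)$ is immediate from dropping the compatibility constraint. For the lower bound I would fix every vertex of the inner boundary $\partial^{\mathrm{in}}W := \{v\in W:\dist_\Gamma(v,W^c)=1\}$ to $0$: such configurations are automatically compatible with any $y$, so $Z^y_\Gamma(W,\lambda)\ge Z_\Gamma(W\setminus\partial^{\mathrm{in}}W,\lambda)$. Combined with the submultiplicativity estimate
$$
Z_\Gamma(W,\lambda) \;\le\; (2\max\{1,\lambda_+\})^{|\partial^{\mathrm{in}}W|}\, Z_\Gamma(W\setminus\partial^{\mathrm{in}}W,\lambda),
$$
and with $|\partial^{\mathrm{in}}W|/|W|\to 0$ (which follows from the F{\o}lner property of $\{F_n\}_n$ together with the bounded degree of $\Gamma$), Proposition~\ref{prop:energy} delivers the claimed uniform asymptotic.

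Finally, for the compatibility factor, a chain-rule argument based on the single-site DLR estimate $\pi^y_{\{v\}}(0)\ge(1+\lambda_+)^{-1}$ yields $\Prob([0^U])\ge(1+\lambda_+)^{-|U|}$ for every finite $U\subseteq V$; since the compatibility event contains the cylinder $[0^{\partial W}]$, one obtains $\Prob(\mathrm{compat})\ge(1+\lambda_+)^{-|\partial W|}$, which is negligible at the $|W|^{-1}\log$ scale because $|\partial W|/|W|\to 0$. Combining the three asymptotics gives $-\frac{1}{|W|}\log\Prob([x_W])\to -\int\phi_\lambda\,d\Q + f_G(\Gamma,\lambda)$. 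The main obstacle is ensuring that the bounds on $Z^y_\Gamma(W,\lambda)$ really do match at the $|W|^{-1}\log$ scale \emph{uniformly} in $y$, so that the logarithm can be effectively pulled outside the integral; the inner-boundary trick achieves this precisely because amenability forces the boundary contribution to vanish in the limit.
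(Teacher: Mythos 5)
Your proposal is correct, and it rests on the same three pillars as the paper's proof: the DLR equation to pass from $\Prob([x_W])$ to the specification, Lindenstrauss' pointwise ergodic theorem for the $\log$-weight term, and Proposition~\ref{prop:energy} for the partition-function term, with amenability (together with local finiteness and freeness, which make $K=\{g\in G:\dist_\Gamma(U_0,gU_0)\le 1\}$ finite and hence force $|\partial W|/|W|\to 0$) absorbing every boundary contribution. Where you genuinely diverge is in the bookkeeping of the boundary comparison. The paper enlarges the window to $M_n=U_n\cup\partial U_n$, compares $\pi^y_{M_n}([x_{U_n}])$ with the single configuration $x_{U_n}0^{M_n\setminus U_n}$ under the free boundary condition $0^\Gamma$, and thereby lands directly on the $y$-independent quantity $\weight_\lambda(x_{U_n},U_n)\,Z_\Gamma(M_n,\lambda)^{-1}$, at the cost of a factor $(2\max\{1,\lambda_+\})^{2|M_n\setminus U_n|}$ that is negligible at the $|W|^{-1}\log$ scale. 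You instead keep the window at $W$ itself, factor out $\weight_\lambda(x,W)$ exactly, and must then control two $y$-dependent objects: the conditional partition function $Z^y_\Gamma(W,\lambda)$, which your inner-boundary trick pins uniformly to within $(2\max\{1,\lambda_+\})^{|\partial^{\mathrm{in}}W|}$ of $Z_\Gamma(W,\lambda)$ (correct, since any configuration vanishing on $\partial^{\mathrm{in}}W$ is compatible with every $y$), and the compatibility probability, which your chain-rule bound $\Prob([0^{\partial W}])\ge(1+\lambda_+)^{-|\partial W|}$ shows is subexponential, uniformly in $x$ because $[0^{\partial W}]$ is contained in the compatibility event for every $x\in X(\Gamma)$. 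The paper's enlarged-window device avoids your third piece entirely; your version isolates more explicitly which quantities must be controlled uniformly in the boundary condition $y$. Either route closes the argument.
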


\begin{proof}
Consider the sets $U_n = F_nU_0$ and $M_n = U_n \cup \partial U_n$. Notice that, by amenability, $\lim_{n \to \infty} \frac{\lvert M_n \rvert}{\lvert U_n \rvert} = 1$. Indeed, define $K = \{g \in G: \dist_\Gamma(U_0,gU_0) \leq 1\}$. Then, $1_G \in K$ and $U_0 \cup \partial U_0 \subseteq KU_0$. Since $\Gamma$ is locally finite and the action is free, $K$ is finite. In addition, $U_n \cup \partial U_n \subseteq F_nKU_0$. Therefore, by amenability,
$$
1 \leq \lim_n \frac{\lvert U_n \cup \partial U_n \rvert}{\lvert U_n \rvert} \leq \lim_n \frac{\lvert F_nKU_0 \rvert}{\lvert F_nU_0 \rvert} = 1,
$$
so $\lim_n \frac{\lvert \partial U_n \rvert}{\lvert U_n \rvert} = 0$. Fix independent sets $x \in X(\Gamma[U_n])$, $z_1,z_2 \in X(\Gamma[M_n \setminus U_n])$, and $y \in X(\Gamma)$ such that $xz_iy \in X(\Gamma)$ for $i = 1,2$. Then,
\begin{align*}
\frac{\pi^y_{M_n}(xz_1)}{\pi^y_{M_n}(xz_2)}	&	=	\frac{\weight^y_\lambda(xz_1,M_n) Z^y_\Gamma(M_n,\lambda)^{-1}}{\weight^y_\lambda(xz_2,M_n) Z^y_\Gamma(M_n,\lambda)^{-1}}	\\
									&	=	\frac{\weight_\lambda(xz_1,M_n)1_{[y_{M_n^c}]}(xz_1)}{\weight_\lambda(xz_2,M_n) 1_{[y_{M_n^c}]}(xz_2)}	\\
									&	=	\frac{\prod_{v \in M_n \setminus U_n} \lambda(v)^{z_1(v)}}{\prod_{v \in M_n \setminus U_n} \lambda(v)^{z_2(v)}} \\
									&	\leq	\prod_{v \in M_n \setminus U_n} \lambda_+^{z_1(v)}\lambda_-^{-z_2(v)}.
\end{align*}

Taking $z_2 = 0^\Gamma$ and adding over all possible $z_1$, we obtain that
$$
1 \leq \frac{\pi^y_{M_n}([x_{U_n}])}{\pi^y_{M_n}(x_{U_n}0^{M_n \setminus U_n})} = \sum_{\substack{z \in \{0,1\}^{M_n \setminus U_n}:\\ x_{U_n}z \in X(\Gamma[M_n])}}\frac{\pi^y_{M_n}(x_{U_n}z)}{\pi^y_{M_n}(x_{U_n}0^{M_n \setminus U_n})} \leq  (2\max\{1,\lambda_+\})^{\lvert M_n \setminus U_n \rvert}.
$$

On the other hand, we have that
$$
\frac{\pi^y_{M_n}([x_{U_n}0^{M_n \setminus U_n}])}{\pi^{0^\Gamma}_{M_n}([x_{U_n}0^{M_n \setminus U_n}])} = \frac{\weight^y_\lambda(x_{U_n}0^{M_n \setminus U_n},M_n) Z^y_\Gamma(M_n,\lambda)^{-1}}{\weight^{0^\Gamma}_\lambda(x_{U_n}0^{M_n \setminus U_n},M_n) Z^{0^\Gamma}_\Gamma(M_n,\lambda)^{-1}} = \frac{Z^y_\Gamma(M_n,\lambda)}{Z_\Gamma(M_n,\lambda)},
$$
since
$$
\weight^y_\lambda(x_{U_n}0^{M_n \setminus U_n},M_n) = \weight^{0^\Gamma}_\lambda(x_{U_n}0^{M_n \setminus U_n},M_n) = \weight_\lambda(x_{U_n},U_n)
$$ 
and $Z^{0^\Gamma}_\Gamma(M_n,\lambda) = Z_\Gamma(M_n,\lambda)$. In addition,
$$
1 \leq \frac{Z^y_\Gamma(M_n,\lambda)}{Z_\Gamma(M_n,\lambda)} \leq (2\max\{1,\lambda_+\})^{\lvert M_n \setminus U_n \rvert},
$$
since
\begin{align*}
Z^y_\Gamma(M_n,\lambda)	&	\leq	Z_\Gamma(M_n,\lambda)	\\
						&	=	\sum_{x \in X(\Gamma[U_n])}\sum_{\substack{z \in X(\Gamma[M_n \setminus U_n]):\\ xz \in X(\Gamma[M_n])}} \prod_{v \in U_n} \lambda(v)^{x(v)}\prod_{v \in M_n \setminus U_n} \lambda(v)^{z(v)}	\\
						&	\leq	\sum_{x \in X(\Gamma[U_n])}\sum_{\substack{z \in X(\Gamma[M_n \setminus U_n]):\\ xz \in X(\Gamma[M_n])}} \prod_{v \in U_n} \lambda(v)^{x(v)} \max\{1,\lambda_+\}^{\lvert M_n \setminus U_n \rvert}	\\
						&	\leq	2^{\lvert M_n \setminus U_n \rvert} \max\{1,\lambda_+\}^{\lvert M_n \setminus U_n \rvert}\sum_{x \in X(\Gamma[U_n])} \prod_{v \in U_n} \lambda(v)^{x(v)}	\\
						&	=	(2\max\{1,\lambda_+\})^{\lvert M_n \setminus U_n \rvert} Z_\Gamma(U_n,\lambda)	\\
						&	\leq	(2\max\{1,\lambda_+\})^{\lvert M_n \setminus U_n \rvert} Z^y_\Gamma(M_n,\lambda). 
\end{align*}

Therefore,
\begin{align*}
1	&	\leq	\frac{\pi^y_{M_n}([x_{U_n}])}{\pi^{0^\Gamma}_{M_n}([x_{U_n}0^{M_n \setminus U_n}])}	\\
	&	=	\frac{\pi^y_{M_n}([x_{U_n}])}{\pi^y_{M_n}([x_{U_n}0^{M_n \setminus U_n}])}\frac{\pi^y_{M_n}([x_{U_n}0^{M_n \setminus U_n}])}{\pi^{0^\Gamma}_{M_n}([x_{U_n}0^{M_n \setminus U_n}])}	\\
	&	=	\frac{\pi^y_{M_n}([x_{U_n}])}{\pi^y_{M_n}([x_{U_n}0^{M_n \setminus U_n}])}\frac{Z^y_\Gamma(M_n,\lambda)}{Z_\Gamma(M_n,\lambda)}	\\
	&	\leq	(2\max\{1,\lambda_+\})^{2\lvert M_n \setminus U_n \rvert}.
\end{align*}

In particular, since $\Prob([x_{F_nU_0}]) = \mathbb{E}_{\Prob}(\Prob([x_{U_n}] \vert \mathcal{B}_{M_n^c})(y)) = \mathbb{E}_{\Prob}(\pi^y_{M_n}([x_{U_n}]))$, we have that
$$
1 \leq \frac{\Prob([x_{F_nU_0}])}{\pi^{0^\Gamma}_{M_n}([x_{U_n}0^{M_n \setminus U_n}])} \leq (2\max\{1,\lambda_+\})^{2\lvert M_n \setminus U_n \rvert},
$$
so
$$
\lvert \log\Prob([x_{F_nU_0}]) - \log\pi^{0^\Gamma}_{M_n}(x_{U_n}0^{M_n \setminus U_n}) \rvert \leq 2\lvert M_n \setminus U_n \rvert\log(2\max\{1,\lambda_+\}).
$$

Now, since $\weight^{0^\Gamma}_\lambda(x_{M_n},M_n) = \weight_\lambda(x_{M_n},M_n)$ for every $x$, we have that
$$
\pi^{0^\Gamma}_{M_n}(x_{U_n}0^{M_n \setminus U_n}) = \weight_\lambda(x_{U_n}0^{M_n \setminus U_n},M_n) Z_\Gamma(M_n,\lambda)^{-1} = \weight_\lambda(x_{U_n},U_n) Z_\Gamma(M_n,\lambda)^{-1}.
$$

Therefore,
$$
\lvert \log\Prob([x_{F_nU_0}]) - (\log\weight_\lambda(x_{U_n},U_n) - \log Z_\Gamma(M_n,\lambda)) \rvert \leq 2\lvert M_n \setminus U_n \rvert\log(2\max\{1,\lambda_+\}),
$$
so
\begin{align*}
	&	\lim_{n \to \infty} \lvert  -\frac{\log\Prob([x_{F_nU_0}])}{\lvert U_n \rvert} + \left(\frac{\log\weight_\lambda(x_{U_n},U_n)}{\lvert U_n \rvert} - \frac{\log Z_\Gamma(M_n,\lambda)}{\lvert U_n \rvert}\right)\rvert	\\
\leq	&	\lim_n 2\frac{\lvert M_n \setminus U_n \rvert}{\lvert U_n \rvert}\log(2\max\{1,\lambda_+\}) = 0,
\end{align*}
and we conclude that
\begin{align*}
-\lim_n \frac{\log\Prob([x_{F_nU_0}])}{\lvert F_nU_0 \rvert}	&	=	\lim_n \frac{-\log\weight_\lambda(x_{U_n},U_n)}{\lvert U_n \rvert} + \frac{\log Z_\Gamma(M_n,\lambda)}{\lvert U_n \rvert}	\\
										&	=	-\lim_n \left(\frac{1}{\lvert F_n \rvert} \sum_{g \in F_n} \frac{1}{\lvert U_0 \rvert} \sum_{v \in gU_0} x(v)\log\lambda(v)\right) + f_G(\Gamma,\lambda),
\end{align*}
where we have used that $Z_\Gamma(U_n,\lambda) \leq Z_\Gamma(M_n,\lambda) \leq (2\max\{1,\lambda_+\})^{\lvert M_n \setminus U_n \rvert}Z_\Gamma(U_n,\lambda)$. Finally, notice that
$$
\frac{1}{\lvert F_n \rvert} \sum_{g \in F_n}  \frac{1}{\lvert U_0 \rvert} \sum_{v \in gU_0} x(v)\log\lambda(v) = \frac{1}{\lvert F_n \rvert} \sum_{g \in F_n} \phi_\lambda(g \cdot x)
$$
and by the Pointwise Ergodic Theorem, we obtain that
$$
\lim_n \frac{1}{\lvert F_n \rvert} \sum_{g \in F_n} \phi_\lambda(g \cdot x) = \int{\phi_\lambda}d\mathbb{Q} \quad \mathbb{Q}\text{-a.s.}, 
$$
so
$$
-\lim_n \frac{\log\Prob([x_{F_nU_0}])}{\lvert F_nU_0 \rvert} = - \int{\phi_\lambda}d\mathbb{Q} + f_G(\Gamma,\lambda),
$$
and we conclude the proof.
\end{proof}

We have the following lemma.

\begin{lemma}
\label{lem:bound}
Given $\Delta \in \mathbb{N}$ and $(\Gamma,\lambda) \in \mathcal{H}^\Delta$, there exists a constant $C = C(\Delta,\lambda_-,\lambda_+) > 0$ such that for every $U \Subset V(\Gamma)$, $x \in X(\Gamma)$, and $v \in U$,
$$
\pi^x_U([x_v]) \geq C.
$$
\end{lemma}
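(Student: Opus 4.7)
The plan is to reduce $\pi^x_U$ to an unconditional finite hardcore measure on an induced subgraph, at which point Lemma \ref{lem:probbound} supplies the desired bound directly. Note that the claim is purely about a single specification element, so no infinite-volume machinery is needed.

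First, I would split $U$ into the set of \emph{forced} vertices
$$F := \{w \in U : \exists w' \in U^{\mathrm{c}} \text{ with } (w,w') \in E \text{ and } x(w') = 1\}$$
and the set of \emph{free} vertices $U^* := U \setminus F$. Since $x$ is an independent set, $x|_F \equiv 0$, and from the definition of $\pi^x_U$ any configuration $y \in X(\Gamma, U)$ with $\pi^x_U(y) > 0$ must be compatible with $x_{U^{\mathrm{c}}}$, which forces $y|_F \equiv 0$ as well. Consequently, for every $v \in F$ we have $\pi^x_U([x_v]) = \pi^x_U([0^v]) = 1$, so the bound holds trivially regardless of $C \leq 1$.

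For $v \in U^*$, I would observe that $y \mapsto y|_{U^*}$ is a bijection between the support of $\pi^x_U$ and $X(\Gamma[U^*])$. Because $y$ vanishes on $F$, the weight reduces to $\weight_\lambda(y, U) = \prod_{w \in U^*} \lambda(w)^{y(w)}$, so the pushforward of $\pi^x_U$ under this restriction is exactly the finite hardcore measure $\Prob_{\Gamma[U^*], \lambda}$ on $\Gamma[U^*]$. Hence
$$\pi^x_U([x_v]) = \Prob_{\Gamma[U^*], \lambda}([x_v]).$$

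To conclude, since $(\Gamma[U^*], \lambda|_{U^*})$ is a finite hardcore model with maximum degree at most $\Delta$ and activity values contained in $[\lambda_-, \lambda_+]$, I would invoke Lemma \ref{lem:probbound} on both $[0^v]$ and $[1^v]$ and take the smaller of the two lower bounds to obtain
$$\pi^x_U([x_v]) \geq \min\left\{\frac{1}{1+\lambda_+}, \; \frac{\lambda_-}{\lambda_- + (1+\lambda_+)^\Delta}\right\} =: C > 0,$$
a quantity depending only on $\Delta$, $\lambda_-$, and $\lambda_+$. There is no serious obstacle here; the only point that deserves explicit verification is that the restriction-and-renormalization step genuinely produces the unconditional hardcore measure on $\Gamma[U^*]$ (rather than some twisted variant), which is immediate from the product form of $\weight_\lambda$ and the fact that $F$-vertices contribute a factor of $\lambda(w)^0 = 1$.
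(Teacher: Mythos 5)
Your proposal is correct and follows essentially the same route as the paper: the paper likewise separates the vertices of $U$ that are forced to be $0$ by an occupied neighbor in $U^{\mathrm{c}}$ (where the conditional probability is trivially $1$) from the remaining vertices, identifies the conditioned measure on the free part with the unconditional hardcore measure on the induced subgraph, and then applies Lemma \ref{lem:probbound} to get the same constant $\min\bigl\{\tfrac{1}{1+\lambda_+},\tfrac{\lambda_-}{\lambda_-+(1+\lambda_+)^\Delta}\bigr\}$. No substantive differences.
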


\begin{proof}
Fix $(\Gamma,\lambda) \in \mathcal{H}_G^\Delta$, $U \Subset V(\Gamma)$, $x \in X(\Gamma)$, and $v \in U$. Notice that if $U^c \cap \partial\{v\} \neq \emptyset$ and $u \in U^c \cap \partial\{v\}$ is such that $x(u) = 1$, then necessarily $x_v = 0^v$ and $\pi^x_U([x_v]) = 1$. On the other hand, if $U \cap \partial\{v\} = \emptyset$, then $\pi^x_U([x_v]) = \Prob_{\Gamma[U'],\lambda}([x_v])$ for $U' = U \setminus \{u \in U: x(u') = 1 \text{ for some } u' \in U^c \cap \partial\{u\}\}$, so, by Lemma \ref{lem:probbound},
$$
\pi^x_U([x_v]) = \Prob_{\Gamma[U'],\lambda}([x_v]) \geq \min\left\{\frac{1}{1+\lambda_+}, \frac{\lambda_-}{\lambda_- + (1+\lambda_+)^\Delta}\right\} \geq \frac{\min\{1,\lambda_-\}}{\lambda_- + (1+\lambda_+)^\Delta} > 0,
$$
Therefore, by taking $C = \min\left\{1, \frac{\min\{1,\lambda_-\}}{\lambda_- + (1+\lambda_+)^\Delta}\right\} = \frac{\min\{1,\lambda_-\}}{\lambda_- + (1+\lambda_+)^\Delta}$, we conclude.
\end{proof}

\subsection{A randomized sequential cavity method}

Suppose now that $(\Gamma,\lambda) \in \mathcal{H}_G$ is such that the Gibbs $(\Gamma,\lambda)$-specification satisfies SSM and let $\Prob$ be the unique Gibbs measure. Considering this, we define the function $I_{\Prob}: X(\Gamma) \times (\{0,1\}^G)^G \to \mathbb{R}$ given by
$$
I_{\Prob}(x,\Phi) := \limsup_{n \to \infty} I_{\Prob, n}(x,\Phi),
$$
where
$$
I_{\Prob,n}(x,\Phi) := -\frac{1}{\lvert \Gamma/G \rvert}\log\Prob([x_{U_0}] \vert [x_{(\Phi(1_G) \cap F_n)U_0}])
$$
and $\{F_n\}_n$ is any exhaustion of $G$ (not necessarily F{\o}lner). Here, $\Phi \in (\{0,1\}^G)^G$ should be understood as an instance of a random invariant past and $I_{\Prob,n}$ as a rudimentary information function. In this sense, notice that $I_{\Prob,n}(x,\Phi)$ only depends on the values of $x$ restricted to $\Phi(1_G) \in \{0,1\}^G$, so $\Phi$ carries redundant information. This redundancy will play a role in the next results when taking averages.

\begin{lemma}
If $(\Gamma,\lambda) \in \mathcal{H}_G$ is such that the Gibbs $(\Gamma,\lambda)$-specification satisfies SSM and $\Prob$ is the unique Gibbs measure, then the function $I_{\Prob}$ is measurable, non-negative, defined everywhere, and bounded. Moreover,
$$
 I_{\Prob}(x,\Phi) = -\frac{1}{\lvert \Gamma/G \rvert}\log\Prob([x_{U_0}] \vert [x_{\Phi(1_G)U_0}]).
$$
\end{lemma}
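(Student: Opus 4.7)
The four claims can be treated in ascending order of difficulty. Non-negativity is immediate: each
$$
I_{\Prob,n}(x,\Phi) = -\log \Prob([x_{U_0}] \mid [x_{(\Phi(1_G)\cap F_n)U_0}]) \geq 0
$$
since the conditional probability lies in $[0,1]$, and the $\limsup$ preserves this inequality. That $I_\Prob$ is defined everywhere relies on \emph{full support} of the Gibbs measure $\Prob$ on a locally finite graph, established earlier: every finite cylinder $[x_W]$ satisfies $\Prob([x_W]) > 0$, so the conditional probability is well-defined for every $x \in X(\Gamma)$ and every $\Phi \in (\{0,1\}^G)^G$. Measurability of $I_{\Prob,n}$ is inherited from its dependence on finitely many coordinates of $(x,\Phi)$: the set $\Phi(1_G) \cap F_n$ is determined by $|F_n|$ coordinates of $\Phi$, and the resulting conditional probability is a ratio of marginals of $\Prob$ over a finite vertex set. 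Joint measurability follows, and $I_\Prob$ as a $\limsup$ of measurable functions is measurable.

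The substantive content is boundedness. Fix $n,x,\Phi$ and set $W := (\Phi(1_G) \cap F_n)U_0$. If $1_G \in \Phi(1_G)$, then $W \supseteq U_0$ and $I_{\Prob,n}(x,\Phi)=0$, so the only case requiring work is $1_G \notin \Phi(1_G)$, in which case freeness of $G \acts \Gamma$ forces $W \cap U_0 = \emptyset$. The plan is to reduce $\Prob([x_{U_0}] \mid [x_W])$ to specification probabilities that Lemma \ref{lem:bound} controls. With $A := \partial U_0 \setminus W$, I would write
\begin{align*}
\Prob([x_{U_0}] \mid [x_W]) &\geq \Prob([x_{U_0}][0^A] \mid [x_W]) \\
&= \Prob([x_{U_0}] \mid [0^A][x_W]) \cdot \Prob([0^A] \mid [x_W]).
\end{align*}
Since $A \cup (W \cap \partial U_0) \supseteq \partial U_0$, the Markov/Gibbs property identifies the first factor with a single specification probability $\pi^y_{U_0}([x_{U_0}])$, where $y$ equals $x$ on $W \cap \partial U_0$ and $0$ on $A$; this $y$ is automatically compatible with $x_{U_0}$ because replacing boundary values by $0$ never creates conflicts. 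Iterating the chain rule inside $\pi^y_{U_0}$ (using consistency of the specification, $\pi^y_U(\cdot \mid [x_{U'}]) = \pi^{yx_{U'}}_{U \setminus U'}(\cdot)$) and applying Lemma \ref{lem:bound} at each of the $|U_0|$ steps yields $\pi^y_{U_0}([x_{U_0}]) \geq C^{|U_0|}$, where $C = C(\Delta(\Gamma),\lambda_-,\lambda_+) > 0$. A parallel iteration for the all-zero event on $A$ (compatible with any boundary condition) gives $\Prob([0^A] \mid [x_W]) \geq C^{|A|} \geq C^{|\partial U_0|}$. Consequently
$$
I_{\Prob,n}(x,\Phi) \leq -(|U_0| + |\partial U_0|)\log C,
$$
a bound independent of $n$, $x$, and $\Phi$, which passes to the $\limsup$ $I_\Prob$.

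The main obstacle I anticipate is the bookkeeping in the boundedness step: one must verify carefully that the Markov property converts each conditional probability into a specification element on a finite set, and that zeroing out the free boundary vertices preserves compatibility with the prescribed $x_{U_0}$. Local finiteness of $\Gamma$ together with almost transitivity of $G \acts \Gamma$ ensure that both $|U_0|$ and $|\partial U_0|$ are finite, so the final constant depends only on the hardcore model $(\Gamma,\lambda)$ and the fundamental domain $U_0$, not on $n$, $x$, or $\Phi$.
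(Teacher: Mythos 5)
Your proof is correct, and its skeleton matches the paper's: measurability from finite-coordinate dependence of $I_{\Prob,n}$, and boundedness by telescoping single-site conditional probabilities, each bounded below by the constant $C$ of Lemma \ref{lem:bound}. Where you diverge is in how you connect conditional probabilities of $\Prob$ to the specification. The paper first uses SSM (via Proposition \ref{prop:GKssm}) to show that $\Prob([x_v]\mid[x_U])\geq C$ for \emph{arbitrary} $U\subseteq V$, by writing it as $\lim_\ell \pi^x_{U\cup B_\Gamma(v,\ell)^c}([x_v])$, and then iterates over $U_0$ directly against the full past $\Phi(1_G)U_0$; this also establishes that $\lim_n I_{\Prob,n}$ exists (not merely the limsup), a fact the paper relies on later in Lemma \ref{lem:info} and Theorem \ref{thm:rep1}. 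You instead work at each finite $n$, condition additionally on $[0^{\partial U_0\setminus W}]$ so that the Markov property reduces the first factor to a genuine specification element, and bound the second factor separately; this buys you independence from SSM in the boundedness step (only full support, local finiteness, and the Markov property are used) at the cost of the extra factor $C^{|\partial U_0|}$, giving the slightly weaker but still uniform bound $-(|U_0|+|\partial U_0|)\log C$. Your treatment of the degenerate case $1_G\in\Phi(1_G)$ and of ``defined everywhere'' via full support is also fine for the lemma as stated, though note that the paper's SSM argument is what upgrades the limsup to an actual limit, which is the form in which $I_{\Prob}$ is used downstream.
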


\begin{proof}
Since $\Prob([x_{U_0}] \vert [x_{(\Phi(1_G) \cap F_n)U_0}])$ depends on finitely many coordinates in both $X(\Gamma)$ and $(\{0,1\}^G)^G$, $0 < \Prob([x_{U_0}] \vert [x_{(\Phi(1_G) \cap F_n)U_0}]) < 1$, and $-\log(\cdot)$ is a continuous function, $I_{\Prob,n}$ is measurable and since $I_{\Prob}$ is a limit superior, it is measurable as well.

By SSM and Proposition \ref{prop:GKssm}, $$\lim_{n \to \infty} \Prob([x_{U_0}] \vert [x_{(\Phi(1_G) \cap F_n)U_0}]) = \Prob([x_{U_0}] \vert [x_{\Phi(1_G)U_0}])$$ is always a well-defined limit. By Lemma \ref{lem:bound}, and since $(\Gamma,\lambda) \in \mathcal{H}^\Delta_G$ for some $\Delta$, there exists a constant $C > 0$ such that for every $v \in V$, $U \Subset V \setminus \{v\}$, and $x \in X(\Gamma)$,
$$
\pi^x_U([x_v]) \geq C.
$$

Now, combined with the SSM property, this implies that for every $v \in V$, $U \subseteq V$, and $x \in X(\Gamma)$, we have that
$$
\Prob([x_v] \vert [x_U]) \geq C.
$$ 

Indeed, if $v \in U$, this is direct, since $\Prob([x_v] \vert [x_U]) = 1$. On the other hand, if  $v \notin U$, by SSM,
$$
\Prob([x_v] \vert [x_U]) = \lim_{\ell \to \infty} \Prob([x_v] \vert [x_{U \cup B_\Gamma(v,\ell)^c}]) = \lim_{\ell \to \infty} \pi_{U \cup B_\Gamma(v,\ell)^c}^x([x_v]) \geq \lim_{\ell \to \infty} C = C.
$$

Therefore, by conditioning and iterating, we obtain that
$$
1 \geq \Prob([x_{U_0}] \vert [x_{\Phi(1_G) U_0}]) = \prod_{i=1}^{\lvert U_0 \rvert} \Prob([x_{v_i}] \vert [x_{\Phi U_0 \cup \{v_1,\dots,v_{i-1}\}}]) \geq \prod_{i=1}^{\lvert U_0 \rvert} C = C^{\lvert \Gamma / G \rvert},
$$
so
$$
0 \leq I_{\Prob}(x,\Phi) = -\log\Prob([x_{U_0}] \vert [x_{\Phi(1_G)U_0}]) \leq -\lvert \Gamma / G \rvert\log C < +\infty,
$$
i.e., $I_{\Prob}(x,\Phi)$ is bounded. 
\end{proof}

Following \cite{1-alpeev}, given an invariant random past $\tilde{\Phi}: G \to \{0,1\}^G$ on $G$ with law $\tilde{\nu} \in \mathcal{M}_G((\{0,1\}^G)^G)$, we denote
$$
\mathbb{E}_{\tilde{\Phi}}f(\tilde{\Phi}) = \int{f(\tilde{\Phi})}d\tilde{\nu}(\tilde{\Phi}),
$$
for $f \in L^1(\tilde{\nu})$.  Now, since $I_{\Prob}$ is measurable, non-negative, and bounded, we have that for every $\mathbb{Q} \in \mathcal{M}_G(X(\Gamma))$, the function $I_{\Prob}$ is integrable with respect to $\mathbb{Q} \times \tilde{\nu}$ and by Tonelli's theorem, the function $\mathbb{E}_{\tilde{\Phi}}I_{\Prob}: X(\Gamma) \to \mathbb{R}$ is integrable, defined $\mathbb{Q}$-almost everywhere, and satisfies that
$$
\int{\mathbb{E}_{\tilde{\Phi}}I_{\Prob}(x,\tilde{\Phi}})d\mathbb{Q}(x) = \mathbb{E}_{\tilde{\Phi}}\int{I_{\Prob}(x,\tilde{\Phi}})d\mathbb{Q}(x).
$$

We call $\mathbb{E}_{\tilde{\Phi}}I_{\Prob}$ the {\bf random $\Prob$-information function} (with respect to $\tilde{\Phi}$).

\begin{lemma}
\label{lem:info}
For any tempered F{\o}lner sequence $\{F_n\}_n$, any invariant random past $\tilde{\Phi}$, and any $\mathbb{Q} \in \mathcal{M}_G(X(\Gamma))$,
$$
\lim_n \left[ - \frac{1}{\lvert F_nU_0 \rvert} \log \Prob([x_{F_nU_0}]) -  \frac{1}{\lvert F_n \rvert}\sum_{g \in F_n} \mathbb{E}_{\tilde{\Phi}}I_{\Prob}(g \cdot x,\tilde{\Phi}) \right] = 0
$$
for $\mathbb{Q}$-almost every $x$.
\end{lemma}

\begin{proof}
Fix a (tempered) F{\o}lner sequence $\{F_n\}_n$. By the properties of $\tilde{\Phi}$, for $\tilde{\nu}$-almost every instance $\Phi$, every $F_n$ can be ordered as $g_1, \dots,g_{\lvert F_n \rvert}$ so that $\Phi(g_i) \cap F_n = \{g_1,\dots,g_{i-1}\}$. Then,
$$
\Prob([x_{F_nU_0}]) = \prod_{g \in F_n} \Prob([x_{gU_0}] \vert [x_{(\Phi(g) \cap F_n)U_0}]).
$$

Given $\ell > 0$, let $K_\ell = \{g \in G: \dist_\Gamma(U_0,gU_0) \leq \ell\}$. If $g \in \mathrm{Int}_{K_\ell}(F_n) = \{g \in G: K_\ell g \subseteq F_n\}$, then
$$
\lvert \Prob([x_{gU_0}] \vert [x_{(\Phi(g) \cap F_n)U_0}]) - \Prob([x_{gU_0}] \vert [x_{(\Phi(g) \cap K_\ell g)U_0}]) \rvert \leq \lvert U_0 \rvert\delta(\ell)
$$
and
$$
\lvert \Prob([x_{gU_0}] \vert [x_{(\Phi(g) \cap K_\ell g)U_0}]) - \Prob([x_{gU_0}] \vert [x_{\Phi(g)U_0}]) \rvert \leq \lvert U_0 \rvert\delta(\ell),
$$
so
$$
\lvert \Prob([x_{gU_0}] \vert [x_{(\Phi(g) \cap F_n)U_0}]) - \Prob([x_{gU_0}] \vert [x_{\Phi(g)U_0}]) \rvert \leq 2\lvert U_0 \rvert\delta(\ell).
$$

On the other hand, by Lemma \ref{lem:bound} and the discussion after it, for every $g \in G$ and $U \subseteq V$, 
$$
\Prob([x_{gU_0}] \vert [x_U]) \geq C^{\lvert \Gamma / G \rvert} > 0.
$$ 

Therefore, by the Mean Value Theorem,
$$
\lvert \log\Prob([x_{gU_0}] \vert [x_{(\Phi(g) \cap F_n)U_0}]) - \log\Prob([x_{gU_0}] \vert [x_{\Phi(g)U_0}]) \rvert \leq \frac{2\lvert U_0\rvert}{C^{\lvert \Gamma / G\rvert}}\delta(\ell).
$$

Notice that
\begin{align*}
\log\Prob([x_{F_nU_0}])	&	=		\sum_{g \in \mathrm{Int}_{K_\ell}(F_n)} \log\Prob([x_{gU_0}] \vert [x_{(\Phi(g) \cap F_n)U_0}]) \\
					&	\qquad	+ \sum_{g \in F_n \setminus \mathrm{Int}_{K_\ell}(F_n)} \log\Prob([x_{gU_0}] \vert [x_{(\Phi(g) \cap F_n)U_0}]),
\end{align*}
so
\begin{align*}
\lvert \log\Prob([x_{FU_0}]) - \sum_{g \in F_n} \log\Prob([x_{gU_0}] \vert [x_{\Phi(g)U_0}])\rvert	&	\leq	\lvert \mathrm{Int}_{K_\ell}(F_n)\rvert\frac{2\lvert U_0\rvert}{C^{\lvert \Gamma / G\rvert}}\delta(\ell) \\
																	&	\qquad	+ 2\lvert F_n \setminus \mathrm{Int}_{K_\ell}(F_n)\rvert\log(C^{-\lvert \Gamma / G\rvert}).
\end{align*}

Now, given $\epsilon > 0$, there exists $\ell$ and $n_0$ such that for every $n \geq n_0$,
$$
\lvert \frac{\log\Prob([x_{F_nU_0}])}{\lvert F_n\rvert} - \frac{1}{\lvert F_n\rvert}\sum_{g \in F_n} \log\Prob([x_{gU_0}] \vert [x_{\Phi(g)U_0}]) \rvert \leq \epsilon.
$$

By $G$-invariance of $\Prob$,
\begin{align*}
\Prob([x_{gU_0}] \vert [x_{\Phi(g)U_0}])	&	=	\Prob(g^{-1} \cdot [(g \cdot x)_{U_0}] \vert [x_{\Phi(g)U_0}])		\\
								&	=	\Prob([(g \cdot x)_{U_0}] \vert g \cdot [x_{\Phi(g)U_0}])		\\
								&	=	\Prob([(g \cdot x)_{U_0}] \vert [(g \cdot x)_{\Phi(g)g^{-1}U_0}]),
\end{align*}
and combining this fact with the previous estimate, we obtain that
$$
\lvert \frac{\log\Prob([x_{F_nU_0}])}{\lvert F_n\rvert} - \frac{1}{\lvert F_n\rvert}\sum_{g \in F_n} \log\Prob([(g \cdot x)_{U_0}] \vert [(g \cdot x)_{\Phi(g)g^{-1}U_0}]) \rvert \leq	\epsilon
$$

Integrating against $\tilde{\nu}$, we obtain that, for $\mathbb{Q}$-almost every $x$,
$$
\lvert \frac{\log\Prob([x_{F_nU_0}])}{\lvert F_n\rvert} - \frac{1}{\lvert F_n \rvert}\sum_{g \in F_n} \mathbb{E}_{\tilde{\Phi}}\log\Prob([(g \cdot x)_{U_0}] \vert [(g \cdot x)_{\tilde{\Phi}(g)g^{-1}U_0}]) \rvert \leq	\epsilon
$$
and since $\tilde{\Phi}(g)$ has the same distribution as $\tilde{\Phi}(1_G)g$, we get that
\begin{align*}
\mathbb{E}_{\tilde{\Phi}}\log\Prob([(g \cdot x)_{U_0}] \vert [(g \cdot x)_{\tilde{\Phi}(g)g^{-1}U_0}])	&	=	\mathbb{E}_{\tilde{\Phi}}\log\Prob([(g \cdot x)_{U_0}] \vert [(g \cdot x)_{\tilde{\Phi}(1_G)gg^{-1}U_0}])	\\
																			&	=	\mathbb{E}_{\tilde{\Phi}}\log\Prob([(g \cdot x)_{U_0}] \vert [(g \cdot x)_{\tilde{\Phi}(1_G)U_0}])	\\
																			&	=	-\lvert \Gamma/G \rvert\mathbb{E}_{\tilde{\Phi}} I_\Prob(g \cdot x, \tilde{\Phi}),
\end{align*}
so
$$
\lvert -\frac{\log\Prob([x_{F_nU_0}])}{\lvert F_nU_0 \rvert} - \frac{1}{\lvert F_n \rvert}\sum_{g \in F_n} \mathbb{E}_{\tilde{\Phi}} I_\Prob(g \cdot x, \tilde{\Phi}) \rvert \leq \epsilon,
$$
and since $\epsilon$ is arbitrary and the limit exists $\mathbb{Q}$-almost surely, we conclude.
\end{proof}

We have the following representation theorem for free energy, which can be regarded as a randomized generalization of the results in \cite{1-gamarnik,1-marcus,2-briceno} tailored for the specific case of the hardcore model. Notice that, in contrast with \cite{1-gamarnik,1-marcus,2-briceno}, the representation holds in every amenable group and not just virtually orderable groups.

\begin{theorem}
\label{thm:rep1}
Let $(\Gamma,\lambda) \in \mathcal{H}_G$ such that the Gibbs $(\Gamma,\lambda)$-specification satisfies SSM and $\Prob$ is the unique Gibbs measure. Then,
$$
f_G(\Gamma,\lambda) = \int{\left(\mathbb{E}_{\tilde{\Phi}}I_{\Prob} + \phi_\lambda\right)}d\mathbb{Q}
$$
for any $\mathbb{Q} \in \mathcal{M}_G(X(\Gamma))$ and for any invariant random past $\tilde{\Phi}$ of $G$. In particular,
$$
f_G(\Gamma,\lambda) = \mathbb{E}_{\tilde{\Phi}} I_{\Prob}(0^\Gamma,\tilde{\Phi}).
$$
\end{theorem}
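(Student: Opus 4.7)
The plan is to combine the Shannon--McMillan--Breiman-type identity of Theorem \ref{thm:smb} with the asymptotic replacement from Lemma \ref{lem:info} and the Lindenstrauss Pointwise Ergodic Theorem, first for $G$-ergodic $\Q$ and then by ergodic decomposition for arbitrary $G$-invariant $\Q$. I would fix a tempered F{\o}lner exhaustion $\{F_n\}_n$ and some $\Q \in \mathcal{M}_G^{\mathrm{erg}}(X(\Gamma))$. Theorem \ref{thm:smb} identifies, for $\Q$-a.e.~$x$, the limit of $-\tfrac{1}{|F_n U_0|} \log \Prob([x_{F_n U_0}])$ with $f_G(\Gamma,\lambda) - \int \phi_\lambda \, d\Q$. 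Lemma \ref{lem:info} re-expresses this same log-probability, asymptotically, as the Birkhoff-type average $\tfrac{1}{|F_n|} \sum_{g \in F_n} \mathbb{E}_{\tilde \Phi} I_\Prob(g \cdot x, \tilde \Phi)$. Because the random information function $\mathbb{E}_{\tilde \Phi} I_\Prob$ is measurable and bounded (as shown in the preceding lemmas), it lies in $L^1(\Q)$, and the Lindenstrauss ergodic theorem along the tempered F{\o}lner sequence yields
$$
\lim_n \frac{1}{|F_n|} \sum_{g \in F_n} \mathbb{E}_{\tilde \Phi} I_\Prob(g \cdot x, \tilde \Phi) = \int \mathbb{E}_{\tilde \Phi} I_\Prob \, d\Q \qquad \Q\text{-a.s.}
$$
Equating the two expressions for this common limit and rearranging gives the identity for ergodic $\Q$.

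Next I would remove the ergodicity hypothesis by ergodic decomposition: every $\Q \in \mathcal{M}_G(X(\Gamma))$ can be written as $\Q = \int \Q_\omega \, d\nu(\omega)$ with $\Q_\omega$ ergodic for $\nu$-a.e.~$\omega$ (the Choquet-type decomposition available for continuous amenable group actions on compact metric spaces). The ergodic case applied to each $\Q_\omega$ yields the same constant $f_G(\Gamma,\lambda)$ on the left and $\int (\mathbb{E}_{\tilde \Phi} I_\Prob + \phi_\lambda)\, d\Q_\omega$ on the right; integrating over $\omega$ against $\nu$ and using Fubini (justified by the boundedness of both integrands) transfers the formula to arbitrary $G$-invariant $\Q$. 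For the ``in particular'' claim, I would specialize to $\Q = \delta_{0^\Gamma}$. Since $0^\Gamma$ is a fixed point of $G \acts X(\Gamma)$, this delta measure is $G$-invariant (and trivially ergodic); moreover $\phi_\lambda(0^\Gamma) = 0$, so the master identity collapses to $f_G(\Gamma,\lambda) = \mathbb{E}_{\tilde \Phi} I_\Prob(0^\Gamma, \tilde \Phi)$, which matches the right-hand side of the ``in particular'' assertion under the notation $I_{\Prob_{\Gamma,\lambda},\tilde \Phi}(0^\Gamma)$.

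The principal obstacle I anticipate is the bookkeeping between the two natural normalizations appearing in the inputs: the per-site normalization $\tfrac{1}{|F_n U_0|}$ driving Theorem \ref{thm:smb} versus the per-orbit normalization $\tfrac{1}{|F_n|}$ used in Lemma \ref{lem:info}, together with the $\tfrac{1}{|\Gamma/G|}$ factor built into the definition of $\phi_\lambda$. Once these scalings are tracked carefully, the argument reduces to the elementary combination of two convergent sequences, the Pointwise Ergodic Theorem applied to the bounded integrand $\mathbb{E}_{\tilde \Phi} I_\Prob$, and a routine ergodic decomposition; no further analytical ingredient is needed beyond what has already been assembled in the section.
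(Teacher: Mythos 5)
Your proposal is correct and follows essentially the same route as the paper: reduce to ergodic $\Q$ via ergodic decomposition, combine Theorem~\ref{thm:smb} with Lemma~\ref{lem:info} to equate the two almost-sure limits, and specialize to $\Q = \delta_{0^\Gamma}$ (using that $0^\Gamma$ is a fixed point and $\phi_\lambda(0^\Gamma)=0$) for the final claim. The only immaterial difference is the last step: you identify $\lim_n \frac{1}{|F_n|}\sum_{g\in F_n}\mathbb{E}_{\tilde{\Phi}}I_{\Prob}(g\cdot x,\tilde{\Phi})$ with $\int \mathbb{E}_{\tilde{\Phi}}I_{\Prob}\,d\Q$ via the Lindenstrauss pointwise ergodic theorem applied to the bounded function $\mathbb{E}_{\tilde{\Phi}}I_{\Prob}$, whereas the paper integrates the almost-sure limit against $\Q$ using dominated convergence, Tonelli, and $G$-invariance; both justifications are valid.
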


\begin{proof}
First, notice that if the statement holds for every $\mathbb{Q} \in \mathcal{M}^{\mathrm{erg}}_G(X(\Gamma))$, then it holds for every $\mathbb{Q} \in \mathcal{M}_G(X(\Gamma))$ by the Ergodic Decomposition Theorem. Then, without loss of generality, we can assume that $\mathbb{Q}$ is $G$-ergodic. Considering this, by Theorem \ref{thm:smb}, for $\mathbb{Q}$-almost every $x$,
$$
\lim_n \left[ - \frac{1}{\lvert F_nU_0 \rvert} \log \Prob([x_{F_n}]) \right] = -\int{\phi_\lambda} d\mathbb{Q} + f_G(\Gamma,\lambda).
$$

By Lemma \ref{lem:info}, for $\mathbb{Q}$-almost every $x$,
$$
\lim_n \left[ - \frac{1}{\lvert F_nU_0 \rvert} \log \Prob([x_{F_n}]) \right] = \lim_n \frac{1}{\lvert F_n \rvert}\sum_{g \in F_n} \mathbb{E}_{\tilde{\Phi}} I_\Prob(g \cdot x, \tilde{\Phi}).
$$

Therefore, for $\mathbb{Q}$-almost every $x$,
$$
f_G(\Gamma,\lambda) - \int{\phi_\lambda} d\mathbb{Q} = \lim_n \frac{1}{\lvert F_n \rvert}\sum_{g \in F_n} \mathbb{E}_{\tilde{\Phi}} I_\Prob(g \cdot x, \tilde{\Phi}).
$$

Integrating against $\mathbb{Q}$, we obtain that
\begin{align*}
\int{ \lim_n \frac{1}{\lvert F_n \rvert}\sum_{g \in F_n} \mathbb{E}_{\tilde{\Phi}}I_{\Prob}(g \cdot x,\tilde{\Phi})}d\mathbb{Q}	&	=	 \lim_n\int{ \frac{1}{\lvert F_n \rvert}\sum_{g \in F_n} \mathbb{E}_{\tilde{\Phi}}I_{\Prob}(g \cdot x,\tilde{\Phi})}d\mathbb{Q}	\\
																					&	=	 \lim_n\frac{1}{\lvert F_n \rvert}\sum_{g \in F_n}\mathbb{E}_{\tilde{\Phi}} \int{I_{\Prob}(g \cdot x,\tilde{\Phi})}d\mathbb{Q}	\\
																					&	=	 \lim_n\frac{1}{\lvert F_n \rvert}\sum_{g \in F_n}\mathbb{E}_{\tilde{\Phi}} \int{I_{\Prob}(x,\tilde{\Phi})}d\mathbb{Q}	\\
																					&	=	 \mathbb{E}_{\tilde{\Phi}} \int{I_{\Prob}(x,\tilde{\Phi})}d\mathbb{Q}	\\
																					&	=	 \int{\mathbb{E}_{\tilde{\Phi}} I_{\Prob}(x,\tilde{\Phi})}d\mathbb{Q},
\end{align*}
where the first equality is due to the Dominated Convergence Theorem, the second and last equalities are due to Tonelli's Theorem, and the third equality is due to the $G$-invariance of $\mathbb{Q}$. We conclude that
$$
f_G(\Gamma,\lambda) = \int{\left(\mathbb{E}_{\tilde{\Phi}}I_{\Prob} + \phi_\lambda\right)}d\mathbb{Q}.
$$

In particular, if $\mathbb{Q} = \delta_{0^\Gamma} \in \mathcal{M}_G(X(\Gamma))$, the Dirac measure supported on $0^\Gamma$, then
$$
f_G(\Gamma,\lambda) = \mathbb{E}_{\tilde{\Phi}} I_{\Prob}(0^\Gamma,\tilde{\Phi}) + \phi_\lambda(0^\Gamma) = \mathbb{E}_{\tilde{\Phi}} I_{\Prob}(0^\Gamma,\tilde{\Phi}).
$$
\end{proof}

\begin{figure}[ht]
\centering
\includegraphics[scale = 1.5]{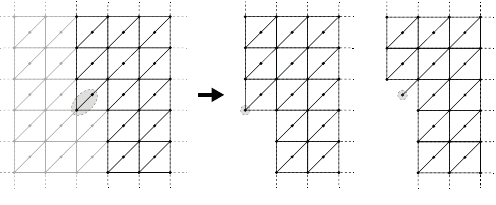}
\caption{The graph $\Gamma \setminus \Phi_\prec U_0$ and the corresponding graphs $\Gamma_i(\Phi_\prec)$ for $G = \mathbb{Z}^2$ and the lexicographic order $\prec$}
\label{fig:lex}
\end{figure}

\subsection{An arboreal representation of free energy}

The following theorem tell us that, under some special conditions, $f_G(\Gamma,\lambda)$ can be expressed using $\lvert \Gamma / G \rvert$ terms depending on the probability that the roots of some particular trees are unoccupied. Roughly, the trees involved are the trees of self-avoiding walks that are rooted at the vertices of a given fundamental domain and explore the graph $\Gamma$ without entering to the ``past graph'' induced by an invariant random past.

\begin{theorem}
\label{thm:rep2}
Let $(\Gamma,\lambda) \in \mathcal{H}_G$ such that the Gibbs $(T_{\mathrm{SAW}}(\Gamma,v),\overline{\lambda})$-specification satisfies SSM for every $v \in V(\Gamma)$ and let $v_1,\dots,v_{\lvert \Gamma / G \rvert}$ be an arbitrary ordering of a fundamental domain $U_0$. Given an invariant random past $\tilde{\Phi}$ of $G$, denote by $\Gamma_i(\tilde{\Phi})$ the random graph given by $\Gamma \setminus (\tilde{\Phi}(\iden)U_0 \cup \{v_1,\dots,v_{i-1}\})$. Then, 
$$
f_G(\Gamma,\lambda) = -\frac{1}{\lvert \Gamma/G\rvert}\sum_{i=1}^{\lvert \Gamma / G \rvert} \mathbb{E}_{\tilde{\Phi}} \log \Prob_{T_{\mathrm{SAW}}(\Gamma_i(\tilde{\Phi}),v_i),\overline{\lambda}}([0^{\rho_i}]),
$$
where $\rho_i$ denotes the root of $T_{\mathrm{SAW}(\Gamma_i(\Phi),v_i)}$. In particular, if $\prec$ is a deterministic invariant order of $G$,
$$
f_G(\Gamma,\lambda) = -\frac{1}{\lvert \Gamma/G \rvert}\sum_{i=1}^{\lvert \Gamma / G \rvert} \log \Prob_{T_{\mathrm{SAW}}(\Gamma_i(\Phi_\prec),v_i),\overline{\lambda}}([0^{\rho_i}]).
$$
\end{theorem}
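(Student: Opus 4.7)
The plan is to apply Theorem \ref{thm:rep1} with the Dirac measure $\Q = \delta_{0^\Gamma} \in \mathcal{M}_G(X(\Gamma))$, which gives directly $f_G(\Gamma,\lambda) = \mathbb{E}_{\tilde{\Phi}} I_{\Prob}(0^\Gamma, \tilde{\Phi})$. Since SSM on $(T_{\mathrm{SAW}}(\Gamma,v),\overline{\lambda})$ for every $v$ implies SSM on $(\Gamma,\lambda)$ (as discussed after Theorem \ref{thm:wsmssm}, via Theorem \ref{thm:tsaw}), the limsup in the definition of $I_\Prob$ is in fact a limit and equals $-\log \Prob([0^{U_0}] \mid [0^{\tilde{\Phi}(\iden) U_0}])$, where $\Prob = \Prob_{\Gamma,\lambda}$ denotes the unique Gibbs measure.

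Next, I would decompose this conditional probability via the chain rule along the prescribed ordering $v_1, \dots, v_{|\Gamma/G|}$ of $U_0$:
\begin{equation*}
\Prob([0^{U_0}] \mid [0^{\tilde{\Phi}(\iden) U_0}]) = \prod_{i=1}^{|\Gamma/G|} \Prob\bigl([0^{v_i}] \mid [0^{\tilde{\Phi}(\iden) U_0 \cup \{v_1,\dots,v_{i-1}\}}]\bigr).
\end{equation*}
By Proposition \ref{prop:GKssm}, each factor on the right equals $\Prob_{\Gamma_i(\tilde{\Phi}),\lambda}([0^{v_i}])$, since $\Gamma_i(\tilde{\Phi}) = \Gamma \setminus (\tilde{\Phi}(\iden) U_0 \cup \{v_1,\dots,v_{i-1}\})$. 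Since $T_{\mathrm{SAW}}(\Gamma_i(\tilde{\Phi}), v_i)$ is a subtree of $T_{\mathrm{SAW}}(\Gamma, v_i)$ (removing vertices from $\Gamma$ can only remove self-avoiding walks starting at $v_i$), SSM on $(T_{\mathrm{SAW}}(\Gamma, v_i), \overline{\lambda})$ transfers to SSM on $(T_{\mathrm{SAW}}(\Gamma_i(\tilde{\Phi}), v_i), \overline{\lambda})$, and thus Theorem \ref{thm:tsaw} applies and yields $\Prob_{\Gamma_i(\tilde{\Phi}), \lambda}([0^{v_i}]) = \Prob_{T_{\mathrm{SAW}}(\Gamma_i(\tilde{\Phi}), v_i), \overline{\lambda}}([0^{\rho_i}])$.

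Finally, I would take logarithms, negate, and use linearity of expectation to exchange $\mathbb{E}_{\tilde{\Phi}}$ with the finite sum over $i$, yielding the stated formula. The deterministic case is a direct specialization: take $\tilde{\Phi}$ to be the Dirac measure concentrated on the algebraic past $\Phi_\prec$, which collapses the expectation. The step demanding the most care is verifying that the SSM hypothesis, which is given only for $T_{\mathrm{SAW}}(\Gamma, v)$, descends to the random subtrees $T_{\mathrm{SAW}}(\Gamma_i(\tilde{\Phi}), v_i)$ (so that Theorem \ref{thm:tsaw} is invoked legitimately on each random subgraph). This is not a true obstacle, since SSM is monotone under restriction to subgraphs/subtrees; the conceptual content is simply the observation that the self-reducibility of the hardcore model along the chain rule, combined with Weitz's tree representation, turns the local contribution over $U_0$ into independent arboreal computations.
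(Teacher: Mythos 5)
Your proposal is correct and follows essentially the same route as the paper: apply Theorem \ref{thm:rep1} with $\Q = \delta_{0^\Gamma}$, expand $-\log\Prob_{\Gamma,\lambda}([0^{U_0}] \mid [0^{\tilde{\Phi}(\iden)U_0}])$ by the chain rule, pass to the unconditioned measures on $\Gamma_i(\tilde{\Phi})$ via Proposition \ref{prop:GKssm}, and then invoke Theorem \ref{thm:tsaw} to land on the trees of self-avoiding walks. Your explicit remark that SSM on $(T_{\mathrm{SAW}}(\Gamma,v_i),\overline{\lambda})$ descends to the subtrees $T_{\mathrm{SAW}}(\Gamma_i(\tilde{\Phi}),v_i)$ is a point the paper leaves implicit, and it is the right justification for applying Theorem \ref{thm:tsaw} to the random subgraphs.
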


\begin{proof}
By Theorem \ref{thm:rep1}, we know that
$$
f_G(\Gamma,\lambda) = \mathbb{E}_{\tilde{\Phi}}I_{\Prob_{\Gamma,\lambda}}(0^\Gamma,\tilde{\Phi}) = -\frac{1}{\lvert \Gamma/G \rvert}\mathbb{E}_{\tilde{\Phi}}\log\Prob_{\Gamma,\lambda}([0^{U_0}] \vert [0^{\tilde{\Phi}(1_G)U_0}]).
$$

By iterating conditional probabilities, linearity of expectation, and Proposition \ref{prop:GKssm} (see Figure \ref{fig:lex}), 
\begin{align*}
\mathbb{E}_{\tilde{\Phi}}\log\Prob_{\Gamma,\lambda}([0^{U_0}] \vert [0^{\tilde{\Phi}(1_G)U_0}])	&	=	\sum_{i=1}^{\lvert \Gamma / G \rvert}\mathbb{E}_{\tilde{\Phi}}\log\Prob_{\Gamma,\lambda}([0^{v_i}] \vert [0^{\tilde{\Phi}(1_G)U_0 \cup \{v_1,\dots,v_{i-1}\}}])	\\
																		&	=	\sum_{i=1}^{\lvert \Gamma / G \rvert}\mathbb{E}_{\tilde{\Phi}}\log\Prob_{\Gamma \setminus (\tilde{\Phi}(1_G)U_0 \cup \{v_1,\dots,v_{i-1}\}),\lambda}([0^{v_i}]) \\
																		&	=	\sum_{i=1}^{\lvert \Gamma / G \rvert}\mathbb{E}_{\tilde{\Phi}}\log\Prob_{\Gamma_i(\tilde{\Phi}),\lambda}([0^{v_i}]).
\end{align*}

Finally, by Theorem \ref{thm:tsaw}, we obtain
$$
\sum_{i=1}^{\lvert \Gamma / G \rvert}\mathbb{E}_{\tilde{\Phi}}\log\Prob_{\Gamma_i(\tilde{\Phi}),\lambda}([0^{v_i}]) = \sum_{i=1}^{\lvert \Gamma / G \rvert} \mathbb{E}_{\tilde{\Phi}}\log \Prob_{T_{\mathrm{SAW}}(\Gamma_i(\tilde{\Phi}),v_i),\overline{\lambda}}([0^{\rho_i}]).
$$
					
In particular, if $\prec$ is a deterministic invariant order on $G$ (see Figure \ref{fig:trees}), we have that
$$
f_G(\Gamma,\lambda) = -\frac{1}{\lvert \Gamma/G \rvert}\sum_{i=1}^{\lvert \Gamma / G \rvert} \log \Prob_{T_{\mathrm{SAW}}(\Gamma_i(\Phi_\prec),v_i),\overline{\lambda}}([0^{\rho_i}]).
$$
\end{proof}

\section{A computational phase transition in the thermodynamic limit}
\label{sec8}

Given an amenable countable group $G$, we are interested in having an algorithm to efficiently approximate $f_G(\Gamma,\lambda)$ in some uniform way over $\mathcal{H}_G$. 

Let $\mathbb{M} \subseteq \mathcal{H}_G$ be a family of hardcore models. We will say that $\mathbb{M}$ admits an {\bf additive fully polynomial-time randomized approximation scheme (additive FPRAS)} for $f_G(\Gamma,\lambda)$ if there is a probabilistic algorithm such that, given an input $(\Gamma,\lambda) \in \mathbb{M}$ and $\epsilon > 0$, outputs $\hat{f}$ with
$$
\Prob\left[\lvert f_G(\Gamma,\lambda) - \hat{f} \rvert \leq \epsilon\right] \geq \frac{3}{4},
$$
in polynomial time in $\lvert \left<\Gamma,\lambda\right> \rvert$ and $\epsilon^{-1}$, where $\lvert \left<\Gamma,\lambda\right> \rvert$ denotes the length of any reasonable representation $\left<\Gamma,\lambda\right>$ of $(\Gamma,\lambda)$. Similarly, we will say that $\mathbb{M}$ admits an {\bf additive fully polynomial-time approximation scheme (additive FPTAS)} for $f_G(\Gamma,\lambda)$ if there is a deterministic additive FPRAS with null failure probability.

 An additive FPRAS and an additive FPTAS will be what we will regard as an efficient and uniform approximation algorithm for $f_G(\Gamma,\lambda)$, random and deterministic, respectively.

\begin{figure}[ht]
\centering
\includegraphics[scale = 1.5]{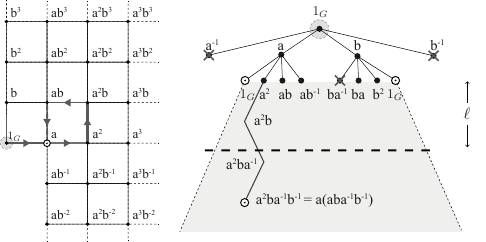}
\caption{A representation of $T_{\mathrm{SAW}(\Gamma_i(\Phi_\prec),v_i)}$ and a logarithmic depth truncation for $G = \mathbb{Z}^2$ and $\Gamma = \Cay(\mathbb{Z}^2,\{\pm(1,0),\pm(0,1)\})$}
\label{fig:trees}
\end{figure}

\begin{remark}
The constant $\frac{3}{4}$ in the definition of additive FPRAS is the standard choice for minimum success probability but it can be replaced by any constant bounded away from $\frac{1}{2}$ without any sensible change in the definition. In order to not have to deal with numerical details about the representation of $\lambda$, we will always implicitly assume that the values taken by $\lambda$ have a bounded number of digits uniformly on $\mathbb{M}$.
\end{remark}

\subsection{Weitz's algorithm and a computational phase transition}

Observe that if $G$ is the trivial group $\{1\}$, then $\mathcal{H}_{\{1\}}$ is exactly the family of finite hardcore models. In this case, we have that
$$
f_G(\Gamma,\lambda) = \frac{\log Z_\Gamma(\lambda)}{\lvert V(\Gamma)\rvert},
$$
and we can translate an approximation of $Z_\Gamma(\lambda)$ into an approximation of $f_G(\Gamma,\lambda)$ and vice versa.

In this finite context, it is common to consider a \emph{fully polynomial-time approximation scheme}. Given a family $\mathbb{M} \subseteq \mathcal{H}_{\{1\}}$ of finite hardcore models, we will say that $\mathbb{M}$ admits a {\bf fully polynomial-time approximation scheme (FPTAS)} for $Z_\Gamma(\lambda)$ if there is an algorithm such that, given an input $(\Gamma,\lambda) \in \mathbb{M}$ and $\epsilon > 0$, outputs $\hat{Z}$ with
$$
Z_\Gamma(\lambda)e^{-\epsilon} \leq \hat{Z} \leq Z_\Gamma(\lambda)e^{\epsilon},
$$
in polynomial time in $\lvert V(\Gamma) \rvert$ and $\epsilon^{-1}$. An FPTAS is regarded as an efficient and uniform approximation algorithm for $Z_\Gamma(\lambda)$.

If we take logarithms and divide by $\lvert V(\Gamma) \rvert$ in the previous equation, we obtain that
$$
\lvert f_G(\Gamma,\lambda) - \hat{f} \rvert  = \lvert \frac{\log Z_\Gamma}{\lvert V(\Gamma) \rvert} - \frac{\log \hat{Z}}{\lvert V(\Gamma) \rvert} \rvert \leq \frac{\epsilon}{\lvert V(\Gamma) \rvert },
$$
where $\hat{f} = \frac{\log \hat{Z}}{\lvert V(\Gamma) \rvert}$, so an FTPAS for $Z_\Gamma(\lambda)$ is equivalent to an additive FPTAS for $f_G(\Gamma,\lambda)$, since a polynomial in $\lvert V(\Gamma) \rvert$ and $\epsilon^{-1}$ is also a polynomial in $\lvert \left<\Gamma,\lambda\right> \rvert$ and $\lvert V(\Gamma) \rvert\epsilon^{-1}$ and vice versa. The same correspondence holds between the natural randomized counterparts (FPRAS and additive FPRAS, respectively).

We will fix a positive integer $\Delta$ and $\lambda_0 > 0$. Given these parameters, we aim to develop a fully polynomial-time additive approximation on $\mathbb{M} = \mathcal{H}_G^\Delta(\lambda_0)$.

The main theorem in \cite{1-weitz} was the development of an FPTAS for $Z_\Gamma(\lambda_0)$ on $\mathcal{H}_{\{1\}}^\Delta(\lambda_0)$ for $\lambda_0 < \lambda_c(\Delta)$. It is not difficult to see that the theorem extends to non-constant activity functions $\lambda$. Then, and also translated into the language of free energy, we have the following result.

\begin{theorem}[{\cite{1-weitz}}]
\label{thm:weitz}
For every $\Delta \in \mathbb{N}$ and $0 < \lambda_0 < \lambda_c(\Delta)$, there exists an FPTAS (resp. additive FPTAS) on $\mathcal{H}_{\{1\}}^\Delta(\lambda_0)$ for $Z_\Gamma(\lambda)$ (resp. for $f_G(\Gamma,\lambda)$).
\end{theorem}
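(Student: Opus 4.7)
The plan is to follow Weitz's original recipe, combining three tools now available in the excerpt: self-reducibility to reduce partition functions to single-vertex marginals, the tree-of-self-avoiding-walks representation from Theorem~\ref{thm21} to move each marginal to a tree, and exponential strong spatial mixing (exponential SSM) on bounded-depth truncations of that tree, which will be valid precisely because $\lambda_0 < \lambda_c(\Delta)$.

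Concretely, I would set $n = |V(\Gamma)|$, fix any ordering $v_1, \dots, v_n$ of $V(\Gamma)$, and let $\Gamma_i := \Gamma[\{v_i, \dots, v_n\}]$. The identity $\Prob_{\Gamma_i, \lambda}([0^{v_i}]) = Z_{\Gamma_{i+1}}(\lambda)/Z_{\Gamma_i}(\lambda)$ telescopes to
$$
\log Z_\Gamma(\lambda) \;=\; -\sum_{i=1}^{n} \log \Prob_{\Gamma_i, \lambda}([0^{v_i}]),
$$
and Theorem~\ref{thm21} rewrites each term as $\log \Prob_{T_{\mathrm{SAW}}(\Gamma_i, v_i), \overline{\lambda}}([0^{\rho_i}])$. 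It then suffices to produce, for every $i$, an $(\epsilon/n)$-additive approximation of this log-marginal in time $\poly(n, \epsilon^{-1})$, and sum. For the analytic input, Proposition~\ref{prop:kelly} gives exponential WSM on $(\mathbb{T}_\Delta, \lambda_0)$ under $\lambda_0 < \lambda_c(\Delta)$; Theorem~\ref{thm:wsmssm}(1) upgrades this to exponential SSM on $(\mathbb{T}_\Delta, \lambda_0)$ with constants $C, \alpha > 0$ depending only on $\Delta$ and $\lambda_0$; and Theorem~\ref{thm:wsmssm}(2), applied to each $T_{\mathrm{SAW}}(\Gamma_i, v_i)$ viewed as a subtree of $\mathbb{T}_\Delta$ with activities bounded by $\lambda_0$, transports exponential SSM uniformly to these trees.

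To exploit this, I would let $T_i^{(\ell)}$ be the subtree of $T_{\mathrm{SAW}}(\Gamma_i, v_i)$ consisting of walks of length at most $\ell$; note that $|V(T_i^{(\ell)})| \leq \Delta^{\ell+1}$ and that the linear-time recursion in the proof of Theorem~\ref{thm21} computes $\Prob_{T_i^{(\ell)}, \overline{\lambda}}([0^{\rho_i}])$ exactly in that many steps. Exponential SSM, applied with $U = V(T_i^{(\ell)})$ and comparing the empty condition with any fixed boundary condition at depth $\ell$, gives
$$
\bigl|\Prob_{T_{\mathrm{SAW}}(\Gamma_i, v_i), \overline{\lambda}}([0^{\rho_i}]) - \Prob_{T_i^{(\ell)}, \overline{\lambda}}([0^{\rho_i}])\bigr| \;\leq\; C e^{-\alpha \ell}.
$$
Lemma~\ref{lem:probbound} supplies a positive lower bound $c_0 = c_0(\Delta, \lambda_0)$ on both marginals, so the Mean Value Theorem converts this into a log-scale error of at most $c_0^{-1} C e^{-\alpha \ell}$. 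Choosing $\ell = \Theta(\log(n/\epsilon))$ makes each such error at most $\epsilon/n$ and the tree size $\poly(n, \epsilon^{-1})$, so the total additive error on $\log Z_\Gamma(\lambda)$ is at most $\epsilon$ and the total running time is $n \cdot \poly(n, \epsilon^{-1}) = \poly(n, \epsilon^{-1})$. The equivalence between an FPTAS for $Z_\Gamma(\lambda)$ and an FPTAA for $f_G(\Gamma,\lambda) = |V(\Gamma)|^{-1}\log Z_\Gamma(\lambda)$, noted in the paragraphs preceding the theorem, then yields both conclusions.

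The main technical point is the controlled conversion between probabilistic and logarithmic errors in the truncation step: we need SSM precisely on $T_{\mathrm{SAW}}(\Gamma_i, v_i)$ rather than on $\Gamma_i$ (SSM on a graph need not transfer to its self-avoiding-walk tree), and we need a uniform positive lower bound on the marginals, independent of $\Gamma$ and $i$, to keep the cumulative additive error on the sum of $n$ logarithms under control. The first requirement is exactly what Theorem~\ref{thm:wsmssm}(2) delivers by embedding $T_{\mathrm{SAW}}(\Gamma_i, v_i)$ into $\mathbb{T}_\Delta$, and the second is precisely the role played by Lemma~\ref{lem:probbound}; everything else is bookkeeping on polynomial depths and running times.
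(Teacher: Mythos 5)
Your proof is correct and is precisely the argument the paper intends: Theorem~\ref{thm:weitz} is cited from Weitz without an in-text proof, and your reconstruction (telescoping self-reducibility, Theorem~\ref{thm21} to pass to $T_{\mathrm{SAW}}$, exponential SSM from Proposition~\ref{prop:kelly} and Theorem~\ref{thm:wsmssm}, logarithmic-depth truncation with Lemma~\ref{lem:probbound} controlling the conversion to log-scale error) is the same scheme the paper itself deploys in Proposition~\ref{prop:approx} for the infinite setting. No gaps; the two technical points you flag (needing SSM on the SAW tree rather than on $\Gamma$, and a uniform lower bound on marginals) are exactly the right ones.
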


This theorem was subsequently refined in \cite{1-sinclair} by considering connective constants instead of maximum degree $\Delta$. A very interesting fact is that when classifying graphs according to their maximum degree, then Theorem \ref{thm:weitz} is in some sense optimal due to the following theorem.

\begin{theorem}[{\cite{1-sly,2-sly}}]
\label{thm:sly}
For every $\Delta \geq 3$ and $\lambda_0 > \lambda_c(\Delta)$, there does not exist an FPRAS (resp. an additive FPRAS) on $\mathcal{H}_{\{1\}}^\Delta(\lambda_0)$ for $Z_\Gamma(\lambda_0)$ (resp. for $f_G(\Gamma,\lambda_0)$), unless $\mathrm{NP} = \mathrm{RP}$.
\end{theorem}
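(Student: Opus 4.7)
The plan is to first translate the statement for $f_G(\Gamma,\lambda)$ into an equivalent statement for $Z_\Gamma(\lambda)$ and then to reduce from a known NP-hard problem. Since $G=\{1\}$ here, every $\Gamma$ is finite and $f_G(\Gamma,\lambda)=|V(\Gamma)|^{-1}\log Z_\Gamma(\lambda)$; consequently, an FPTAA for $f_G$ with additive error $\epsilon=\eta/|V(\Gamma)|$ would yield an FPTAS for $Z_\Gamma(\lambda)$ with relative error $\eta$, still running in time polynomial in $|V(\Gamma)|$ and $\eta^{-1}$. So it suffices to rule out an FPTAS for $Z_\Gamma(\lambda)$ on $\mathcal{H}_{\{1\}}^\Delta(\lambda_0)$ under the hypothesis $\mathrm{NP}\neq\mathrm{RP}$. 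Following Sly \cite{1-sly} and Sly--Sun \cite{2-sly}, I would construct a randomized polynomial-time reduction from a canonical NP-hard problem on bounded-degree graphs (e.g., approximate MAX-CUT on $\Delta$-regular graphs) to approximating $Z_\Gamma(\lambda)$ well enough, so that any hypothetical FPTAS would give an $\mathrm{RP}$ algorithm for the NP-hard problem.

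The core technical gadget I would use is the hardcore model on random $\Delta$-regular bipartite graphs $\Gamma=(V_1\sqcup V_2,E)$. The key step is a sharp second moment analysis on the configuration model showing that, for $\lambda_0>\lambda_c(\Delta)$, with high probability $Z_\Gamma(\lambda_0)$ concentrates on two ``phases'': configurations heavily occupied on $V_1$ and lightly occupied on $V_2$, and vice versa, with densities matching the two non-trivial fixed points of the tree recursion on $\mathbb{T}_\Delta$ (these fixed points exist precisely above $\lambda_c(\Delta)$ by the analysis behind Proposition~\ref{prop:kelly}). The precise claim is that $Z_\Gamma(\lambda_0)\approx Z^{(1)}+Z^{(2)}$, the two contributions are of the same exponential order, and their actual ratio fluctuates in a way controlled by the local combinatorics of the instance.

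Given the gadget and the phase-coexistence statement, I would assemble the reduction by attaching many copies of the random bipartite graph, via small connectors, to the edges and vertices of an instance of the NP-hard problem, so that the ``bias'' between the two phases of each gadget encodes, e.g., a MAX-CUT-like choice. A sufficiently accurate approximation of $Z_\Gamma(\lambda_0)$ on the composite graph would then determine the optimal encoding with bounded error probability, giving an $\mathrm{RP}$ algorithm for the NP-hard problem. The case $\Delta\geq 5$ is handled by the Sly argument \cite{1-sly}; the remaining cases $\Delta\in\{3,4\}$ require the Sly--Sun refinement \cite{2-sly}, which sharpens the second moment analysis by using a local weighting and small-subgraph conditioning scheme to damp correlations between short cycles.

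The main obstacle is the second moment calculation itself: one must compute $\mathbb{E}[Z_\Gamma(\lambda_0)^2]$ on random $\Delta$-regular bipartite graphs and verify that it is of the same exponential order as $(\mathbb{E}[Z_\Gamma(\lambda_0)])^2$ when restricted to each phase. This reduces to a two-dimensional variational problem over the joint occupation densities of two independent hardcore samples on $V_1$ and $V_2$, and one needs to show that the dominant maximizers are precisely the ``aligned'' pairs of asymmetric tree fixed points. This is the place where the threshold $\lambda_c(\Delta)$ enters as tight, and the analysis requires careful bifurcation analysis of the tree recursion together with delicate estimates in the low-degree regime. Once the second moment is controlled, the reduction and the final translation back to free energy are comparatively routine.
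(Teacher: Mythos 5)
The paper does not prove this theorem at all: it is stated as a quoted external result of Sly \cite{1-sly} and Sly--Sun \cite{2-sly}, and the only argument the paper supplies in its vicinity is the translation between an FPTAS for $Z_\Gamma(\lambda)$ and an FPTAA for $f_G(\Gamma,\lambda)$ in the finite case (dividing logarithms by $|V(\Gamma)|$ and rescaling $\epsilon$), which you reproduce correctly in your first paragraph. The rest of your proposal is a faithful high-level outline of the cited works: the second moment method on random $\Delta$-regular bipartite gadgets, phase coexistence above $\lambda_c(\Delta)$ tied to the asymmetric fixed points of the tree recursion, the reduction from an NP-hard cut-type problem via attaching gadgets, and small-subgraph conditioning for the delicate degrees. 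Two caveats. First, your attribution of cases is not quite right: Sly's original paper establishes hardness only in a window $(\lambda_c(\Delta),\lambda_c(\Delta)+\epsilon(\Delta))$ for each $\Delta\geq 3$, and it is the later work (Sly--Sun, and independently Galanis--\v{S}tefankovi\v{c}--Vigoda) that covers the entire regime $\lambda>\lambda_c(\Delta)$ for all $\Delta\geq 3$; the split is by range of $\lambda$ rather than by $\Delta\geq 5$ versus $\Delta\in\{3,4\}$. Second, your text is a proof sketch, not a proof: the second moment computation, the verification that the aligned fixed-point pairs dominate the variational problem, and the small-subgraph conditioning are precisely the hard content of \cite{1-sly,2-sly}, and you defer all of them. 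Since the paper itself treats the theorem as a black box, this level of detail is appropriate for the present context, but it would not constitute an independent proof.
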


\begin{remark}
Notice that the lack of existence of an FPRAS (resp. additive FPRAS) directly implies the lack of existence of an FPTAS (resp. additive FPTAS).
\end{remark}

The combination of Theorem \ref{thm:weitz} and Theorem \ref{thm:sly} is what is regarded as a \emph{computational phase transition}. We aim to extend these theorems to the infinite setting. A theoretical advantage about considering $f_G(\Gamma,\lambda)$ instead of $Z_\Gamma(\lambda)$ is that the free energy still makes sense in infinite graphs and at the same time recovers the theory for $Z_\Gamma(\lambda)$ in the finite case.

\subsection{An extension of Weitz's algorithm to the infinite setting}

For algorithmic purposes, in this section we only consider finitely generated groups $G$ with a fixed symmetric set of generators $S$. In this case, if $(\Gamma,\lambda) \in \mathcal{H}_{G}^\Delta$, then it suffices to know $\Gamma[U_0]$ for some fundamental domain $U_0$ and $L = \{(v_1,s,v_2) \in U_0 \times S \times U_0: (v_1,sv_2) \in E(\Gamma)\}$ in order to fully reconstruct the graph $\Gamma$. In particular, the size of the necessary information to reconstruct $\Gamma$ is bounded by a polynomial in $\Delta$ and $\lvert \Gamma / G \rvert$. In addition, given a $G$-invariant activity function $\lambda: V(\Gamma) \to \mathbb{Q}_{>0}$ (i.e., that only takes positive rational values), we only need to know $\left.\lambda\right\vert_{U_0}$ to recover $\lambda$, i.e., just $\lvert \Gamma / G \rvert$ many rational numbers. Therefore, in this context, the length $\lvert \left<\Gamma,\lambda\right> \rvert$ of the representation $\left<\Gamma,\lambda\right>$ of a hardcore model $(\Gamma, \lambda)$ will be polynomial in $\Delta$ and $\lvert \Gamma / G \rvert$.

First, we are interested in being able to generate in an effective way balls of arbitrary radius in $\Cay(G,S)$. Given an input word $w \in S^*$, we will assume that we can decide whether $e(w) = 1_G$ or not in time $\exp(O(\lvert w \rvert))$, where $\lvert w \rvert$ denotes the length of $w$, $e: S^* \to G$ is the usual evaluation map, and the $O$-notation regards $\lvert S \rvert$ and $\Delta$ as constants. In other words, we will assume that \emph{the word problem of $G$ can be solved in exponential time}. By problems that can be solved in exponential time, we mean the set of decision problems that can be solved by a deterministic Turing machine in time $\exp(O(n))$. This complexity class is sometimes known as $\mathrm{E}$; it contains $\mathrm{P}$ and it is strictly contained in $\mathrm{EXP}$.

Now, if the word problem can be solved in exponential time, then $\Cay(G,S)$ is constructible in time $\exp(O(\ell))$ as well (see \cite[Theorem 5.10]{1-meier}); this is to say, given $\ell > 0$, we can generate $B_{\Cay(G,S)}(1_G,\ell)$ in time $\exp(O(\ell))$. Having that, it is possible to construct $\Gamma[B_{\Cay(G,S)}(1_G,\ell)U_0]$ in time $O(\mathrm{poly}(\lvert \Gamma / G \rvert)\exp(O(\ell)))$ by identifying each $g \in B_{\Cay(G,S)}(1_G,\ell)$ with a copy $\Gamma[gU_0]$ of $\Gamma[U_0]$ and by connecting it to other adjacent copies according to $L$.

In the ordered case, we will also consider the situation where \emph{the algebraic past of $(G,\prec)$ can be decided in exponential time}, i.e., given an input word $w \in S^*$, we will consider that we can decide whether $e(w) \in \Phi_\prec$ or not in time $\exp(O(\lvert w \rvert))$. Notice that this implies that the word problem can be solved in time $\exp(O(\lvert w \rvert))$, since $e(w) = 1_G$ if and only if $e(w) \notin \Phi_\prec$ and $e(w^{-1}) \notin \Phi_\prec$. In particular, and since $\lvert B_{\Cay(G,S)}(1_G,\ell) \rvert \leq \lvert S \rvert^\ell$, by identifying and removing all the copies $\Gamma[gU_0]$ with $g \in \Phi_\prec$ in $\Gamma[B_{\Cay(G,S)}(1_G,\ell)U_0]$, we can construct $\Gamma[(B_{\Cay(G,S)}(1_G,\ell) \setminus \Phi_\prec)U_0]$ in time $O(\mathrm{poly}(\lvert \Gamma / G \rvert)\exp(O(\ell)))$. Recall that we are not losing generality if we assume $G$ to be ordered instead of just virtually ordered.

Considering all this, we have the following key theorem.

\begin{theorem}
\label{thm:rand-approx}
Let $G$ be a finitely generated amenable group such that its word problem can be solved in exponential time. Then, for every $\Delta \in \mathbb{N}$ and $0 < \lambda_0 < \lambda_c(\Delta)$, there exists an additive FPRAS on $\mathcal{H}_G^\Delta(\lambda_0)$ for $f_G(\Gamma,\lambda)$. If, in addition, $G$ is orderable and has an algebraic past that can be decided in exponential time, then the algorithm can be chosen to be deterministic, i.e., an additive FPTAS.
\end{theorem}

\begin{proof}
Pick $(\Gamma,\lambda)$ as in the statement and enumerate as $U_0 = \left\{v_1,\dots,v_n\right\}$ the fundamental domain of $G \acts \Gamma$. Denote $n = \lvert \Gamma / G \rvert$. Then, by Theorem \ref{thm:rep1},
$$f_G(\Gamma,\lambda) = \sum_{i=1}^n -\mathbb{E}_{\tilde{\Phi}}\log\Prob_{\Gamma_i(\tilde{\Phi}),\lambda}([0^{v_i}]),$$
where $\Gamma_i(\tilde{\Phi}) = \Gamma \setminus (\tilde{\Phi}(\iden)U_0 \cup \{v_1,\dots,v_{i-1}\})$ and $\tilde{\Phi}$ is any invariant random past of $G$. Let $f_i = -\mathbb{E}_{\tilde{\Phi}}\log\Prob_{\Gamma_i(\tilde{\Phi}),\lambda}([0^{v_i}])$. Given $\epsilon > 0$, our goal is to generate numbers $\hat{f}_i$ such that
$$
\hat{f}_i - \frac{\epsilon}{n} \leq f_i \leq \hat{f}_i + \frac{\epsilon}{n}
$$
for every $i = 1,\dots,n$. If we manage to obtain these approximations, we have that
$$
\sum_{i=1}^n \hat{f}_i - \epsilon \leq \sum_{i=1}^n f_i = f_G(\Gamma,\lambda) \leq \sum_{i=1}^n \hat{f}_i + \epsilon,
$$
so $\hat{f} := \sum_{i=1}^n \hat{f}_i$ will be the required approximation. By SSM in $\Gamma$, we have that, for every $\ell > 0$,
$$
\lvert \log\Prob_{\Gamma_i(\tilde{\Phi}),\lambda}([0^{v_i}]) - \log\Prob_{\Gamma_i(\tilde{\Phi}) \cap B_\Gamma(v_i,\ell),\lambda}([0^{v_i}]) \rvert \leq C\exp(-\alpha\ell),
$$
and, again by SSM but now on the tree of self-avoiding walks,
$$
\lvert \log\Prob_{\Gamma_i(\tilde{\Phi}) \cap B_\Gamma(v_i,\ell),\lambda}([0^{v_i}]) - \log \Prob_{T_{\mathrm{SAW}}(\Gamma_i(\tilde{\Phi}) \cap B_\Gamma(v_i,\ell),v_i)\cap B(\rho_i,\ell),\overline{\lambda}}([0^{\rho_i}]) \rvert \leq C\exp(-\alpha\ell),
$$
where $C$ is a constant that depends on $\Delta$ and $\lambda_0$. Notice that $T_{\mathrm{SAW}}(\Gamma_i(\tilde{\Phi}) \cap B_\Gamma(v_i,\ell),v_i)\cap B(\rho_i,\ell) = T_{\mathrm{SAW}}(\Gamma_i(\tilde{\Phi}),v_i)\cap B(\rho_i,\ell)$, so
$$
\lvert f_i - \mathbb{E}_{\tilde{\Phi}}\log \Prob_{T_{\mathrm{SAW}}(\Gamma_i(\tilde{\Phi}),v_i)\cap B(\rho_i,\ell),\overline{\lambda}}([0^{\rho_i}]) \rvert \leq 2C\exp(-\alpha\ell).
$$

In order to conclude, it suffices to define $\hat{f}_i$ as $\mathbb{E}_{\tilde{\Phi}}\log \Prob_{T_{\mathrm{SAW}}( \Gamma_i(\tilde{\Phi}),v_i)\cap B(\rho_i,\ell),\overline{\lambda}}([0^{\rho_i}])$ with $\ell$ so that $2C\exp(-\alpha\ell) \leq \frac{\epsilon}{n}$ and show that each approximation $\hat{f}_i$ can be efficiently computed. Notice that we can pick $\ell$ to be $\ell^* = \left\lceil\frac{1}{\alpha}\log(2Cn\epsilon^{-1})\right\rceil$. 

Let's first assume that $\tilde{\Phi}$ is a (deterministic) algebraic past $\Phi_\prec$ of $G$ that can be decided in exponential time. The general probabilistic case will be a slight variation of this case. In order to compute $\hat{f}_i$, first generate the ball $\Gamma[B_{\Cay(G,S)}(1_G,\ell^*)U_0]$, which takes time $\mathrm{poly}(n)\exp(O(\ell^*)) = \mathrm{poly}((1+\epsilon^{-1})n)$. Notice that $B_{\Gamma}(v_i,\ell) \subseteq B_{\Cay(G,S)}(1_G,\ell^*)U_0$. Next, remove the vertices that are at distance greater than $\ell^*$ to $v_i$ and the ones which belong to $\Phi_\prec U_0 \cup \{v_1,\dots,v_{i-1}\}$. This procedure also takes time $\mathrm{poly}(n)\exp(O(\ell^*))$, since $\Phi_\prec$ can be decided in exponential time. Having this, construct the tree of self-avoiding walks $T_i := T_{\mathrm{SAW}}(\Gamma_i(\Phi_\prec),v_i) \cap B(\rho_i,\ell^*)$ (which is a subtree of the tree of self-avoiding walks of $\Gamma_i(\Phi_\prec)$). Using the recursive procedure, compute $\Prob_{T_i,\overline{\lambda}}([0^{\rho_i}])$ and then compute its logarithm. For every $i = 1,\dots,n$, 
$$
\lvert T_i \rvert \leq \Delta^{\ell^*} \leq \left(C(1+\lambda_0)(1+\epsilon^{-1})n\right)^{\frac{\log\Delta}{\alpha}},
$$
which is also a bound for the order of time required for computing $\hat{f}_i$, because $T_i$ is a tree (see Figure \ref{fig:trees}). Finally, since we require to do this procedure $n$ times for each $i = 1,\dots,n$, we have that the total order of the algorithm is still $\mathrm{poly}((1+\epsilon^{-1})n)$, i.e., a polynomial in $n$ and $\epsilon^{-1}$, where the constants involved depend only on $\Delta$, $\lambda_0$, and $\lvert S \rvert$. This gives the desired additive FPTAS in the ordered case.

Now, in the general not necessarily ordered case, we consider the following variation of the previous algorithm. Let $\tilde{\Phi}_{\rm unif}$ be the uniform random order in $G$. Then,
\begin{align*}
	&	-\mathbb{E}_{\tilde{\Phi}_{\rm unif}}\log\Prob_{\Gamma_i(\tilde{\Phi}_{\rm unif}) \cap B_\Gamma(v_i,\ell),\lambda}([0^{v_i}]) \\
=	& -\frac{1}{2^m} \sum_{\Phi \subseteq B_{\Cay(G,S)}(1_G,\ell) \setminus \{1_G\}} \log\Prob_{\Gamma_i(\Phi)  \cap B_\Gamma(v_i,\ell),\lambda}([0^{v_i}])
\end{align*}
with $m = \lvert B_{\Cay(G,S)}(v_i,\ell) \rvert-1$. Consider the random variable $X$ with probability distribution $\Prob(X = -\log\Prob_{\Gamma_i(\Phi)  \cap B_\Gamma(v_i,\ell),\lambda}([0^{v_i}])) = \frac{1}{2^m}$ for $\Phi \subseteq B_{\Cay(G,S)}(1_G,\ell) \setminus \{1_G\}$. Then, $\mathbb{E}[X] = -\mathbb{E}_{\tilde{\Phi}_{\rm unif}}\log\Prob_{\Gamma_i(\tilde{\Phi}_{\rm unif}) \cap B_\Gamma(v_i,\ell),\lambda}([0^{v_i}])$. Due to Lemma \ref{lem:probbound}, we have that
$$
0 < \log\left(1 + \frac{\lambda_-}{(1+\lambda_+)^\Delta}\right) \leq X \leq \log(1+\lambda_+) < \infty.
$$

In particular,
$$
\mathrm{Var}(X) = \mathbb{E}[X^2] - \mathbb{E}[X]^2 \leq (\log(1+\lambda_+))^2 - \left(\log\left(1 + \frac{\lambda_-}{(1+\lambda_+)^\Delta}\right)\right)^2 =: V(\lambda,\Delta) < \infty.
$$

Now, let $\left\{X_{1}, \ldots, X_{N}\right\}$ be a random sample of size $N$ of the variable $X$ and define the sample average $\bar{X}_N \equiv \frac{1}{N} \sum_{j=1}^N X_j$. Notice that $\mathbb{E}[\bar{X}_N] = \mathbb{E}[X]$ and $\mathrm{Var}(\bar{X}_N) = \frac{1}{N} \mathrm{Var}(X) \leq \frac{V(\lambda,\Delta)}{N}$. Therefore, by Chebyshev's inequality, for any $\delta > 0$,
$$
\Prob\left(\lvert \bar{X}_N-\mathbb{E}[X] \rvert \leq \left(\frac{V(\lambda,\Delta)}{N\delta}\right)^{1/2}\right) \geq 1- \delta.
$$

We are interested in having $\lvert \bar{X}_N-\mathbb{E}[X] \rvert \leq \frac{\epsilon}{n}$ with probability greater than $\frac{3}{4}$. In order to guarantee this, we need to take a number of samples $N$ so that
$$
\left(\frac{V(\lambda,\Delta)}{N\delta}\right)^{1/2} \leq \frac{\epsilon}{n},
$$
and $\delta$ such that $1-\delta \geq \frac{3}{4}$, i.e., it suffices to take $N \geq 4V(\lambda,\Delta)(n\epsilon^{-1})^2$. Notice that we need to take a number of samples polynomial in $n$ and $\epsilon^{-1}$, and that each sample can be computed in polynomial time, exactly as in the ordered case. This gives the desired additive FPRAS in the general case.

\end{proof}

\begin{remark}
Notice that Theorem \ref{thm:rand-approx} holds for groups of exponential growth, despite it involves a polynomial time algorithm. In addition, in virtue of Theorem \ref{thm:sinclair}, the families of graphs in Theorem \ref{thm:rand-approx} could be parameterized according to their connective constant instead of the maximum degree.
\end{remark}

Next, we reduce the problem of approximating the partition function of a finite hardcore model to the problem of approximating the free energy of a hardcore model in $\mathcal{H}_G$.

\begin{proposition}
\label{prop:phase}
Let $G$ be an amenable group. Then, for every $\Delta \geq 3$ and $\lambda_0 > \lambda_c(\Delta)$, there does not exist an additive FPRAS on $\mathcal{H}_G^\Delta(\lambda_0)$ for $f_G(\Gamma,\lambda_0)$, unless $\mathrm{NP} = \mathrm{RP}$.
\end{proposition}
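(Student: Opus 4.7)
The plan is to reduce the finite-case hardness of Theorem \ref{thm:sly} to the infinite setting by embedding any finite hardcore model as a $G$-periodic disjoint union of copies of itself. The core observation is that the free energy of such a periodic disjoint union collapses exactly to the normalized log-partition function of a single copy, so an FPTAA on $\mathcal{H}_G^\Delta(\lambda_0)$ immediately induces an FPTAA on $\mathcal{H}_{\{1\}}^\Delta(\lambda_0)$, where the Sly--Sun hardness result applies.

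Concretely, given a finite hardcore model $(\Gamma_0, \lambda_0) \in \mathcal{H}_{\{1\}}^\Delta(\lambda_0)$ with constant activity $\lambda_0$, I would construct $(\Gamma, \lambda) \in \mathcal{H}_G^\Delta(\lambda_0)$ by setting $V(\Gamma) := G \times V(\Gamma_0)$, $E(\Gamma) := \{((g, v), (g, v')) : g \in G,\, (v, v') \in E(\Gamma_0)\}$, letting $G$ act by $h \cdot (g, v) := (hg, v)$, and taking $\lambda \equiv \lambda_0$. This action is free and almost transitive, with fundamental domain $U_0 := \{1_G\} \times V(\Gamma_0)$ of cardinality $|\Gamma/G| = |V(\Gamma_0)|$, and $\Delta(\Gamma) = \Delta(\Gamma_0) \leq \Delta$; hence $(\Gamma, \lambda) \in \mathcal{H}_G^\Delta(\lambda_0)$. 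The representation $\langle \Gamma, \lambda \rangle$ consists of $\Gamma[U_0] \cong \Gamma_0$, the empty cross-fundamental-domain adjacency list, and the value $\lambda_0$, and is computable from $\langle \Gamma_0, \lambda_0 \rangle$ in time polynomial in $|V(\Gamma_0)|$.

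The free energy of the construction is computed directly: for every $F \in \F(G)$, the induced subgraph $\Gamma[FU_0]$ is the disjoint union of $|F|$ copies of $\Gamma_0$, yielding $Z_\Gamma(FU_0, \lambda) = Z_{\Gamma_0}(\lambda_0)^{|F|}$ and $|FU_0| = |F| \cdot |V(\Gamma_0)|$, so for any F{\o}lner sequence $\{F_n\}_n$,
$$
f_G(\Gamma, \lambda) \;=\; \lim_{n \to \infty} \frac{\log Z_\Gamma(F_n U_0, \lambda)}{|F_n U_0|} \;=\; \frac{\log Z_{\Gamma_0}(\lambda_0)}{|V(\Gamma_0)|} \;=\; f_{\{1\}}(\Gamma_0, \lambda_0).
$$

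Now suppose for contradiction that an FPTAA $\mathcal{A}$ on $\mathcal{H}_G^\Delta(\lambda_0)$ for $f_G$ exists. Given $(\Gamma_0, \lambda_0) \in \mathcal{H}_{\{1\}}^\Delta(\lambda_0)$ and $\epsilon > 0$, build $(\Gamma, \lambda)$ as above and run $\mathcal{A}$ on $(\langle \Gamma, \lambda \rangle, \epsilon)$ in time $\mathrm{poly}(|V(\Gamma_0)|, \epsilon^{-1})$; the output $\hat{f}$ satisfies $|f_{\{1\}}(\Gamma_0, \lambda_0) - \hat{f}| = |f_G(\Gamma, \lambda) - \hat{f}| \leq \epsilon$, providing an FPTAA on $\mathcal{H}_{\{1\}}^\Delta(\lambda_0)$ for $f_{\{1\}}$. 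By Theorem \ref{thm:sly}, this forces $\mathrm{NP} = \mathrm{RP}$. The argument presents no real technical obstacle; the only point requiring care is verifying that the encoding of $(\Gamma, \lambda)$ as a member of $\mathcal{H}_G^\Delta(\lambda_0)$ is of size polynomial in $|V(\Gamma_0)|$, which is immediate since $\Gamma$ is fully specified by $\Gamma_0$ together with the empty set of cross-fundamental-domain edges.
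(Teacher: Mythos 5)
Your proposal is correct and follows essentially the same route as the paper: the paper's proof also builds the $G$-periodic disjoint union $\Gamma^G$ of copies of the finite graph with $G$ acting by translating copies, observes $Z_{\Gamma^G}(FU_0,\lambda_0)=Z_{\Gamma_0}(\lambda_0)^{|F|}$ so that $f_G(\Gamma^G,\lambda_0)=\log Z_{\Gamma_0}(\lambda_0)/|V(\Gamma_0)|$, and concludes by invoking Theorem \ref{thm:sly}. Your additional remarks on the polynomial-size encoding and the verification that the action is free and almost transitive are consistent with, and slightly more explicit than, the paper's argument.
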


\begin{proof}
Suppose that we have an additive FPRAS on $\mathcal{H}_G^\Delta(\lambda_0)$ for $f_G(\Gamma,\lambda)$ for some amenable group $G$. Then, we claim that we would have an FPRAS on $\mathcal{H}_{\{1\}}^\Delta(\lambda_0)$ for $Z_\Gamma(\lambda_0)$, contradicting Theorem \ref{thm:sly}. Indeed, given the input $(\Gamma,\lambda) \in \mathcal{H}_{\{1\}}^\Delta(\lambda_0)$, it suffices to consider the graph $\Gamma^G$ made out of copies $\{\Gamma_g\}_{g \in G}$ of $\Gamma$ indexed by $g \in G$, where $v_g$ denotes the copy in $\Gamma_g$ of the vertex $v$ in $\Gamma$. Then, there is a natural action $G \acts \Gamma^G$ consisting on just translating copies of vertices, i.e., $gv_h = v_{hg}$, and a fundamental domain of the action is $U_0 = V(\Gamma_{1_G})$. Therefore, since $\lvert \Gamma^G / G \rvert = \lvert V(\Gamma) \rvert$, if we could $\epsilon$-approximate in an additive way $f_G(\Gamma,\lambda_0)$ in polynomial time in $\lvert \Gamma^G / G \rvert$ and $\epsilon^{-1}$, then we would be able to $\epsilon$-approximate in a multiplicative way $Z_\Gamma(\lambda_0)$ in polynomial time in $\lvert V(\Gamma) \rvert$ and $\epsilon^{-1}$, because
\begin{align*}
f_G(\Gamma,\lambda_0)	&	=	\inf_{F \in \mathcal{F}(G)} \frac{\log Z_{\Gamma^G}(FU_0,\lambda_0)}{\lvert FU_0 \rvert}		\\
					&	=	\inf_{F \in \mathcal{F}(G)} \frac{\log Z_{\Gamma^G}(U_0,\lambda_0)^{\lvert F \rvert}}{\lvert F \rvert\lvert U_0 \rvert} 	\\
					&	=	\frac{\log Z_{\Gamma^G}(U_0,\lambda_0)}{\lvert U_0 \rvert}							\\
					&	=	\frac{\log Z_{\Gamma}(\lambda_0)}{\lvert V(\Gamma) \rvert},
\end{align*}
but this contradicts Theorem \ref{thm:sly}.
\end{proof}

Considering Theorem \ref{thm:rand-approx} and Proposition \ref{prop:phase}, we have the following corollary.

\begin{corollary}
\label{cor:phase}
Let $G$ be a finitely generated amenable group such that its word problem can be solved in exponential time. Then, for every $\Delta \geq 3$ and $\lambda_0 > 0$, if $\lambda_0 < \lambda_c(\Delta)$, there exists an additive FPRAS on $\mathcal{H}_G^\Delta(\lambda_0)$ for $f_G(\Gamma,\lambda)$. If, in addition, $G$ has a finite index orderable subgroup such that its algebraic past can be decided in exponential time, then the algorithm can be chosen to be deterministic, i.e., there exists an additive FPTAS on $\mathcal{H}_G^\Delta(\lambda_0)$ for $f_G(\Gamma,\lambda)$. On the other hand, if $\lambda_0 > \lambda_c(\Delta)$, there is no additive FPRAS on $\mathcal{H}_G^\Delta(\lambda_0)$ for $f_G(\Gamma,\lambda)$, unless $\mathrm{NP} = \mathrm{RP}$.
\end{corollary}

\begin{remark}
Notice that Corollary \ref{cor:phase} still holds for $\Delta = 1$ and $\Delta = 2$. The first case is trivial and in the second case, there is no phase transition and the conditions for the existence of an additive FPRAS (resp. additive FPTAS) hold for every $\lambda_0$.
\end{remark}

To ask that the word problem can be solved in exponential time seems to be a natural requirement for having an efficient algorithm for approximating $f_G(\Gamma,\lambda)$ and, fortunately, there are several classes of finitely generated groups which satisfy this condition.

\begin{example}
Lipton and Zalcstein \cite{1-lipton} proved that every linear group over a field of characteristic zero has a word problem that can be solved in logarithmic space. This result was extended by Simon \cite{1-simonword} to linear groups over a field of prime characteristic. In particular, the word problem of all finitely generated amenable linear groups---which by the Tits alternative \cite{1-tits1972} must be virtually solvable---can be solved in logarithmic space, and therefore polynomial time.

Due to a result from Mal'tsev \cite{1-maltsev1951}, all solvable subgroups of the integer general linear group $\mathrm{GL}_d(\mathbb{Z})$ are polycyclic (i.e., solvable groups in which every subgroup is finitely generated) and virtually polycyclic groups coincide with the class of polycyclic-by-finite groups, which are always finitely presented, residually finite, and have many other desirable algorithmic properties (see \cite{1-baumslag1991}). On the other hand, Auslander \cite{1-auslander1967} and Swan \cite{1-swan1967} proved that any polycyclic group is a subgroup of the integer general linear group. This shows that the class of polycyclic groups is a general and natural setup for the application of our results, since they are amenable, finitely generated, and their word problem can be solved in polynomial time. Examples of polycyclic groups include all finitely generated abelian groups and all finitely generated nilpotent groups.
\end{example}

To understand how to guarantee the existence of an algebraic past that can be decided in exponential time, we start by observing two basic facts: (1) the group of integers $\mathbb{Z}$ is orderable with the natural order and its algebraic past can be decided in linear time and (2) the following lemma.

\begin{lemma}
\label{lem:extorder}
Let $(H_1,\prec_1)$ and $(H_2,\prec_2)$ be two ordered groups and let $G$ be a finitely generated group which is an extension of $H_2$ by $H_1$, i.e., there is a short exact sequence
$$
1 \longrightarrow H_1 \stackrel{\iota}{\longrightarrow} G \stackrel{\pi}{\longrightarrow} H_2 \longrightarrow 1.
$$

Then, $G$ can be ordered by considering the algebraic past $\Phi := \iota(\Phi_1) \cup \pi^{-1}(\Phi_2)$, where $\Phi_i$ denotes the algebraic past of $H_i$ for $i=1,2$. In particular, if $\iota(\Phi_1)$ and $\pi^{-1}(\Phi_2)$ can be decided in exponential time, then $\Phi$ can be decided in exponential time as well.
\end{lemma}

\begin{proof}
Consider the set $\Phi := \iota(\Phi_1) \cup \pi^{-1}(\Phi_2)$. It suffices to check that $\Phi$ is a semigroup (i.e., $\Phi^2 \subseteq \Phi$) and $G = \Phi \sqcup \{1_G\} \sqcup \Phi^{-1}$. Indeed, since $\iota(H_1) = \pi^{-1}(1_{H_2})$, we have that
\begin{align*}
G	&	=	\pi^{-1}(H_2)	\\
	&	=	\pi^{-1}(\Phi_2 \sqcup \{1_{H_2}\} \sqcup \Phi_2^{-1})		\\
	&	=	\pi^{-1}(\Phi_2) \sqcup \pi^{-1}(1_{H_2}) \sqcup \pi^{-1}(\Phi_2^{-1})		\\
	&	=	\pi^{-1}(\Phi_2) \sqcup \iota(H_1) \sqcup \pi^{-1}(\Phi_2)^{-1}		\\
	&	=	\pi^{-1}(\Phi_2) \sqcup \iota(\Phi_1 \sqcup \{1_{H_1}\} \sqcup \Phi_1^{-1}) \sqcup \pi^{-1}(\Phi_2)^{-1}		\\
	&	=	[\iota(\Phi_1) \sqcup \pi^{-1}(\Phi_2)] \sqcup \iota(1_{H_1}) \sqcup [\iota(\Phi_1)^{-1} \sqcup \pi^{-1}(\Phi_2)^{-1}	] \\
	&	=	[\iota(\Phi_1) \sqcup \pi^{-1}(\Phi_2)] \sqcup \{1_G\} \sqcup [\iota(\Phi_1)\sqcup \pi^{-1}(\Phi_2)]^{-1} \\
	&	=	\Phi \sqcup \{1_G\} \sqcup \Phi^{-1}.
\end{align*}

On the other hand, since $\iota(\Phi_1) \subseteq \iota(H_1) = \pi^{-1}(1_{H_2})$ and $\Phi_i^2 \subseteq \Phi_i$ for $i=1,2$,
\begin{align*}
\Phi^2	&	=			[\iota(\Phi_1) \sqcup \pi^{-1}(\Phi_2)]^2	\\
		&	=			\iota(\Phi_1^2) \cup \iota(\Phi_1)\pi^{-1}(\Phi_2) \cup \pi^{-1}(\Phi_2)\iota(\Phi_1) \cup \pi^{-1}(\Phi_2^2)	\\
		&	\subseteq 		\iota(\Phi_1) \cup \pi^{-1}(1_{H_2})\pi^{-1}(\Phi_2) \cup \pi^{-1}(\Phi_2)\pi^{-1}(1_{H_2}) \cup \pi^{-1}(\Phi_2)	\\
		&	=			\iota(\Phi_1) \cup \pi^{-1}(\Phi_2) \cup \pi^{-1}(\Phi_2) \cup \pi^{-1}(\Phi_2)	\\
		&	=			\Phi.
\end{align*}

Therefore, $\Phi$ is an algebraic past for $G$ and it induces the invariant order $\prec$, where $g \prec h$ for $h,g \in G$ if and only if $\pi(gh^{-1}) \prec_2 1_{H_2}$ or [$\pi(gh^{-1}) = 1_{H_2}$ and $gh^{-1} \in \iota(\Phi_1)$]. In particular, if $\iota(\Phi_1)$ and $\pi^{-1}(\Phi_2)$ can be decided in exponential time, then it is direct that $\Phi$ can also be decided in exponential time.
\end{proof}

The previous lemma can be used as a tool for constructing algebraic pasts that can be decided in exponential time in new groups out of simpler ones. We have the following example.

\begin{example}
By the fundamental theorem of finitely generated abelian groups, every finitely generated abelian group $G$ is isomorphic to a group of the form $\mathbb{Z}^d \oplus \mathbb{Z}_{q_1} \oplus \cdots \oplus \mathbb{Z}_{q_t}$, where $d \geq 0$ is the rank and $q_1, \ldots, q_t$ are powers of prime numbers. In particular, $[G:\mathbb{Z}^d]$ is finite, so $\mathbb{Z}^d$ is a finite index subgroup of $G$. On the other hand, $\mathbb{Z}^d$ is an orderable group. A canonical presentation of $\mathbb{Z}^d$ is given by
$$\left< a_1, \dots, a_d \mid \{[a_i, a_j]: 1 \leq i,j \leq d\}\right>,$$
where $[g, h] = ghg^{-1}h^{-1}$ is the commutator of $g$ and $h$. A normal form for $\mathbb{Z}^d$ is given by $\{a_1^{i_1}\cdots a_d^{i_d}: i_1,\dots,i_d \in \mathbb{Z}\}$ and, given any word $w \in \{a^{\pm 1}_1,\dots,a^{\pm 1}_d\}^*$, it takes linear time to obtain its normal form. A canonical order of $\mathbb{Z}^d$ is the lexicographic order $\prec$, where we declare $a_1^{i_1}\cdots a_d^{i_d} \prec a_1^{j_1}\cdots a_d^{j_d}$ if $i_k < j_k$ for some $1 \leq k \leq d$ and $i_m = j_m$ for $m < k$, where $<$ is the usual order in $\mathbb{Z}$. It is easy to see that it can be decided in polynomial time whether $a_1^{i_1}\cdots a_d^{i_d} \prec a_1^{0}\cdots a_d^{0}$ or not. An alternative way to see this is through Lemma \ref{lem:extorder}, by observing that $\mathbb{Z}^d$ is an extension of $\mathbb{Z}^{d-1}$ by $\mathbb{Z}$ and proceed inductively until reaching the base case $\mathbb{Z}^1 = \mathbb{Z}$.
 
Another illustrative example is the discrete Heisenberg group
$$
H_3(\mathbb{Z}) := \left\{\left(\begin{array}{ccc}
1	&	i	&	k	\\
0	&	1	&	j	\\
0	&	0	&	1
\end{array}\right): i, j, k \in \mathbb{Z}\right\} \subseteq \mathrm{SL}_3(\mathbb{Z}).
$$
The group $H_3(\mathbb{Z})$ is a non-abelian nilpotent (and therefore amenable with polynomial growth) finitely generated group. A presentation of $H_3(\mathbb{Z})$ is given by
$$\left< a, b, c \mid[a, c],[b, c],[a, b]c^{-1}\right>,$$
where we identify $a$, $b$, and $c$ with
$$
\left(\begin{array}{lll}
1 & 1 & 0 \\
0 & 1 & 0 \\
0 & 0 & 1
\end{array}\right),
\quad
\left(\begin{array}{lll}
1 & 0 & 0 \\
0 & 1 & 1 \\
0 & 0 & 1
\end{array}\right),
\quad	\text{and}	\quad
\left(\begin{array}{lll}
1 & 0 & 1 \\
0 & 1 & 0 \\
0 & 0 & 1
\end{array}\right),
$$
respectively. A normal form for $H_3(\mathbb{Z})$ is given by $\{b^jc^ka^i: i,j,k \in \mathbb{Z}\}$, where 
$$
b^jc^ka^i = \left(\begin{array}{lll}
1 & i & k \\
0 & 1 & j \\
0 & 0 & 1
\end{array}\right).
$$

It is not difficult to check that given a word $w \in \{a^{\pm 1},b^{\pm 1},c^{\pm 1}\}^*$ in its normal form and a generator $s \in \{a^{\pm 1},b^{\pm 1},c^{\pm 1}\}$, it takes linear time to write $sw$ in its normal form. Observe that it is enough to measure how much time it takes this particular operation and then proceed inductively. Now, it is known that $H_3(\mathbb{Z})$ is an extension of $\mathbb{Z}^2$ by $\mathbb{Z}$, i.e.,
$$
1 \longrightarrow \mathbb{Z} \stackrel{\iota}{\longrightarrow} H_3(\mathbb{Z}) \stackrel{\pi}{\longrightarrow} \mathbb{Z}^2 \longrightarrow 1,
$$
with $\iota: \mathbb{Z} \longrightarrow H_3(\mathbb{Z})$ and $\pi: H_3(\mathbb{Z}) \longrightarrow \mathbb{Z}^2$ given by $\iota(k) = c^k$ and $\pi(b^{j} c^{z} a^{i}) = (i,j)$, respectively. Considering Lemma \ref{lem:extorder} and that $\mathbb{Z}$ and $\mathbb{Z}^2$ have algebraic pasts $\Phi_\mathbb{Z}$ and $\Phi_{\mathbb{Z}^2}$, respectively, such that $\iota(\Phi_\mathbb{Z})$ and $\pi^{-1}(\Phi_{\mathbb{Z}^2})$ can be decided in exponential time, we conclude that $H_3(\mathbb{Z})$ also has an algebraic past that can be decided in exponential time. More concretely, this algebraic past $\Phi_{H_3(\mathbb{Z})}$ is defined by declaring that $b^jc^ka^i \in \Phi_{H_3(\mathbb{Z})}$ if and only if (1) $i < 0$ or (2) $i = 0$ and $j < 0$ or (3) $i=j=0$ and $k < 0$, which takes linear time to decide.

Finally, one other example is the case of the Baumslag-Solitar group $BS(1,2)$. A presentation of $BS(1,2)$ is given by $\left< a, b \mid bab^{-1}a^{-2}\right>$, where we identify $a$ and $b$ with the linear functions $x \mapsto 2x$ and $x \mapsto x+1$ in $\mathrm{Homeo}(\mathbb{R})$, respectively. The group $BS(1,2)$ is a non-nilpotent solvable (and therefore amenable with exponential growth) finitely generated group.  A normal form for $BS(1,2)$ is given by $\{a^{-j}b^{2k+1}a^{j+i}: i,j,k \in \mathbb{Z}\} \cup \{a^i: i \in \mathbb{Z}\}$ and it can also be checked that given a word $w \in \{a^{\pm 1},b^{\pm 1}\}^*$ in its normal form and a generator $s \in \{a^{\pm 1},b^{\pm 1}\}$, it takes polynomial time to write $sw$ in its normal form. It is known that $BS(1,2)$ is an extension of $\mathbb{Z}$ by $\mathbb{Z}\left[\frac{1}{2}\right]$, the group of dyadic rationals, i.e.,
$$
1 \longrightarrow \mathbb{Z}\left[\frac{1}{2}\right] \stackrel{\iota}{\longrightarrow} BS(1,2) \stackrel{\pi}{\longrightarrow} \mathbb{Z} \longrightarrow 1,
$$
with $\iota: \mathbb{Z}\left[\frac{1}{2}\right] \longrightarrow BS(1,2)$ and $\pi: BS(1,2) \longrightarrow \mathbb{Z}$ given by $\iota\left(\frac{2k+1}{2^j}\right) = a^{-j}b^{2k+1}a^{j}$, and $\pi(a^{-j}b^{2k+1}a^{j+i}) = i$ and $\pi(a^{i}) = i$, respectively. Then, due to Lemma \ref{lem:extorder} and the fact that $\mathbb{Z}\left[\frac{1}{2}\right]$ and $\mathbb{Z}$ have algebraic pasts $\Phi_{\mathbb{Z}\left[\frac{1}{2}\right]}$ and $\Phi_{\mathbb{Z}}$, respectively, such that $\iota(\Phi_{\mathbb{Z}\left[\frac{1}{2}\right]})$ and $\pi^{-1}(\Phi_{\mathbb{Z}})$ can be decided in exponential time, we conclude that $BS(1,2)$ also has an algebraic past $\Phi_{BS(1,2)}$ that can be decided in exponential time. More concretely, this algebraic past is defined by declaring that $a^{-j}b^{2k+1}a^{j+i} \in \Phi_{BS(1,2)}$ if and only if (1) $i < 0$ or (2) $i = 0$ and $k < 0$, which takes linear time to decide. This construction can be easily generalized to the group $BS(1,n)$ given by the presentation $\left< a, b \mid bab^{-1}a^{-n}\right>$.

\end{example}

The previous facts about word problems and algebraic pasts give us general conditions for efficiently generating $\Gamma[B_{\Cay(G,S)}(1_G,\ell)U_0]$ and $\Gamma[(B_{\Cay(G,S)}(1_G,\ell) \setminus \Phi_\prec)U_0]$, respectively. 

\section{Reductions}
\label{sec9}

In this section we provide a set of reductions which exploit the combinatorial properties of independent sets and relate the results already obtained for hardcore models with other systems. 

\subsection{$G$-subshifts and conjugacies}

Given a countable group $G$ and a finite set $\Sigma$ endowed with the discrete topology, the {\bf full shift} is the set $\Sigma^G$ of maps $\omega: G \to \Sigma$ endowed with the product topology. We define the {\bf $G$-shift} as the group action $G \times \Sigma^G \to \Sigma^G$ given by $(g,\omega) \mapsto g \cdot \omega$, where $(g \cdot \omega)(h) = \omega(hg)$ for all $h \in G$. A {\bf $G$-subshift} $\Omega$ is a $G$-invariant closed subset of $\Sigma^G$.

Given two $G$-subshifts $\Omega_1$ and $\Omega_2$, we say that a map $\varphi: \Omega_1 \to \Omega_2$ is a {\bf conjugacy} if it is bijective, continuous, and $G$-equivariant, i.e., $g \cdot \varphi(x) = \varphi(g \cdot x)$ for every $\omega \in \Omega_1$ and $g \in G$. In this context, these maps are characterized as \emph{sliding block codes} (e.g., see \cite{1-lind,1-ceccherini}) and provide a notion of isomorphism between $G$-subshifts.

Any $G$-subshift $\Omega$ is characterized by the existence of a family of forbidden patterns $\mathfrak{F} \subseteq \bigcup_{F \in \F(G)} \Sigma^F$ such that $\Omega = X_{\mathfrak{F}}$, where
$$
X_\mathfrak{F} = \{\omega \in \Sigma^G: (g \cdot x)_{F} \notin \mathfrak{F} \text{ for all } g \in G\}.
$$

If the family $\mathfrak{F}$ can be chosen to be finite, we say that $\Omega$ is a {\bf $G$-subshift of finite type ($G$-SFT)}. Given a finite set $S \subseteq G$, we can consider a family of $\lvert \Sigma \rvert \times \lvert \Sigma \rvert$ binary matrices ${\bf M} = \{M_s\}_{s \in S}$ with rows and columns indexed by the elements of $\Sigma$, and define the set
$$
\Omega_{{\bf M}} = \{\omega \in \Sigma^G: M_s(\omega(g),\omega(sg)) = 1 \text{ for all } g \in G, s \in S\}.
$$

The set $\Omega_{{\bf M}}$ is a special kind of $G$-SFT known as {\bf nearest neighbor (n.n.) $G$-SFT}. It is known that for every $G$-SFT there exists a conjugacy to a n.n. $G$-SFT, so we are not losing much generality by considering n.n. $G$-SFTs instead of general $G$-SFTs.

We say that a n.n. $G$-SFT $\Omega_{{\bf M}}$ has a {\bf safe symbol} if there exists $a \in \Sigma$ such that $a$ can be adjacent to any other symbol $b \in \Sigma$. Formally, this means that, for all $s \in S$ and $b \in \Sigma$, $M_s(a,b) = M_s(b,a) = 1$.

\subsection{Entropy and potentials}

Given a $G$-subshift $\Omega$, we define its {\bf topological entropy} as
$$
h_G(\Omega) := \lim_n \frac{\log \lvert \Omega_{F_n} \rvert}{\lvert F_n \rvert},
$$
where $\{F_n\}_n$ is a F{\o}lner sequence and $\Omega_F = \{\omega_F: \omega \in \Omega\}$ is the set of restrictions of points in $\Omega$ to the set $F \subseteq G$. It is known that the definition of $h_G(\Omega)$ is independent of the choice of F{\o}lner sequence and is also a \emph{conjugacy invariant}, i.e., if $\varphi: \Omega_1 \to \Omega_2$ is a conjugacy, then $h_G(\Omega_1) = h_G(\Omega_2)$.

A {\bf potential} is any continuous function $\phi: \Omega \to \mathbb{R}$. Given a potential, we define the {\bf pressure} as
$$
p_G(\phi) := \lim_n \frac{\log \lvert Z_{F_n}(\phi) \rvert}{\lvert F_n \rvert},
$$
where $Z_{F_n}(\phi) = \sum_{w \in \Omega_F} \sup_{\omega \in [w]}\exp(\sum_{g \in F} \phi(g \cdot \omega))$. Notice that $p_G(0) = h_G(\Omega)$.

A {\bf single-site potential} is any potential that only depends on the value of $\omega$ at $1_G$, i.e., $\omega_{1_G}$. In other words, and without risk of ambiguity, we can think that a single-site potential is just a function $\phi: \Sigma \to \mathbb{R}$. In this case, $Z_{F_n}(\phi)$ has the following simpler expression:
$$
Z_{F_n}(\phi) = \sum_{w \in \Omega_F} \prod_{g \in F}\exp(\phi(w(g))).
$$

In this context, we will say that a symbol $a \in \Sigma$ is a {\bf vacuum state} if $a$ is a safe symbol and $\phi(a) = 0$.

\subsection{From a hardcore model to a n.n. $G$-SFT with a vacuum state}

Let $(\Gamma,\lambda)$ be a hardcore model in $\mathcal{H}_G$. If $G \acts \Gamma$ is transitive, then $\Gamma = \Cay(G,S)$ for some finite symmetric set $S \subseteq G$. Then, it is easy to see that if $\Sigma = \{0,1\}$ and, for all $s \in S$,
$$
M_s = \left(\begin{array}{cc} 1 & 1\\1 & 0\end{array}\right),
$$
then $\Omega_{{\bf M}}$ coincides with the set $X(\Gamma)$ and $0$ is a safe symbol. In addition, there is a natural relationship between the activity function $\lambda$ and the single-site potential given by $\phi(0) = 0$ and $\phi(1) = \log\lambda(v)$, where $v$ is some (or any) vertex $v$. In other words, if $G \acts \Gamma$ is transitive, then $(\Gamma,\lambda)$ corresponds to a n.n. $G$-SFT with a vacuum state.

More generally, if $G \acts \Gamma$ is almost transitive, then $(\Gamma,\lambda)$ can also be interpreted as a n.n. $G$-SFT with a vacuum state. Indeed, consider the set $\Sigma_\Gamma = X(\Gamma[U_0])$, i.e., the set of independent sets of the subgraph $\Gamma[U_0]$ induced by some fundamental domain $U_0$. Since $\Gamma$ is locally finite and $G \acts \Gamma$ is free, there must exist a finite set $S \subseteq G \setminus \{1_G\}$ such that $SU_0$ contains all the vertices adjacent to $U_0$. Considering this, we define a collection of matrices ${\bf M}_\Gamma = \{M_s\}_{s \in S}$, where
$$
M_s(x,x') = 
\begin{cases}
1	&	\text{ if } xx' \in X(\Gamma[U_0 \cup sU_0]),	\\
0	&	\text{ otherwise,}
\end{cases}
$$
and $xx'$ denotes the concatenation of the independent set $x$ of $\Gamma[U_0]$ and the independent set $x'$ of $\Gamma[sU_0]$. In other words, $M_s(x,x') = 1$ if and only if the union of the independent set $x$ and the independent set $x'$ is also an independent set of $\Gamma[U_0 \cup sU_0]$.

Then, there is a natural identification between $\Omega_{{\bf M}_\Gamma} \subseteq \Sigma_\Gamma^G$ and $X(\Gamma)$. In particular, the symbol $0^{U_0} \in X(\Gamma[U_0])$ plays the role of a safe symbol in $\Omega_{{\bf M}_\Gamma}$. Moreover, we can define the single-site potential $\phi_\lambda: \Omega_{{\bf M}_\Gamma} \to \mathbb{R}$ given by $\phi_\lambda(\omega) = \sum_{v \in U_0} \omega_{1_G}(v) \log \lambda(v)$. Then, for every $F \in \mathcal{F}(G)$,
\begin{align*}
Z_{F}(\phi_\lambda)	&	=	\sum_{w \in \Omega_F} \prod_{g \in F}\exp(\phi_\lambda(w(g)))								\\
				&	=	\sum_{w \in X(\Gamma[FU_0])} \prod_{g \in F}\exp\left(\sum_{v \in U_0} w(g)(v) \log \lambda(v)\right)	\\
				&	=	\sum_{w \in X(\Gamma[FU_0])} \prod_{g \in F} \prod_{v \in U_0} \exp(w(g)(v) \log \lambda(v))			\\
				&	=	\sum_{w \in X(\Gamma[FU_0])} \prod_{v \in FU_0} \lambda(v)^{w(v)}								\\
				&	=	Z_{\Gamma}(FU_0,\lambda).
\end{align*}

Therefore, $p_G(\Omega_\Gamma, \phi_\lambda) = f_G(\Gamma,\lambda)$. In the language of dynamics, for every almost transitive and locally finite graph $\Gamma$, there exists a n.n. $G$-SFT with a safe symbol $\Omega_\Gamma$ such that $G \acts X(\Gamma)$ and $G \acts \Omega_\Gamma$ are conjugated. Moreover, this gives us a way to identify any hardcore model $(\Gamma,\lambda) \in \mathcal{H}_G$ with the corresponding $G$-SFT $\Omega_\Gamma$ and the single-site potential $\phi_\lambda$.

\begin{figure}[ht]
\centering
\includegraphics[scale = 0.8]{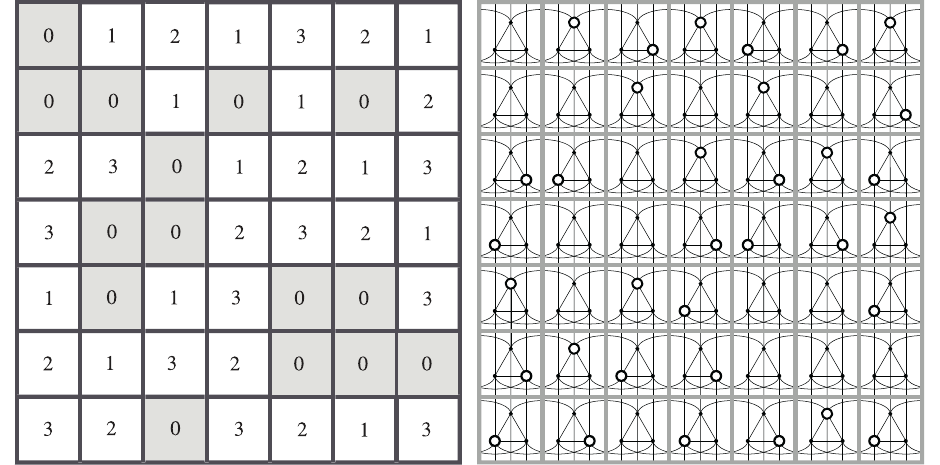}
\caption{On the left, a sample of a configuration in the n.n. SFT $\Omega$ corresponding to proper $3$-colorings of $\Cay(\mathbb{Z}^2, \{\pm(1,0), \pm(0,1)\})$ plus a safe symbol $0$, where each square corresponds to an element of $\mathbb{Z}^2$. On the right, the independent set in the graph $\Gamma_\Omega$ representing the configuration in $\Omega$}
\label{fig:3cols}
\end{figure}

\subsection{From a n.n. $G$-SFT with a vacuum state to a hardcore model}

Conversely, given a n.n. $G$-SFT $\Omega$ and a potential with a vacuum state, we can translate this scenario into a hardcore model. Indeed, consider the graph $\Gamma_\Omega$ defined as follows:
\begin{itemize}
\item for every $g \in G$, consider a finite graph $\Gamma_g$ isomorphic to $K_{\lvert \Sigma \rvert}$, the complete graph with $\lvert \Sigma \rvert$ vertices. In other words, for each $g \in G$ and for each $a \in \Sigma$ there will be a vertex $v_{g,a} \in V(\Gamma_g)$ and for every $a \neq b$, the edge $(v_{g,a},v_{g,b})$ will belong to $E(\Gamma_g)$;
\item the graph $\Gamma_\Omega$ will be the union of all the finite graphs $\Gamma_g$ plus some extra edges;
\item for every $s \in S$ and $a,b \in \Sigma$, we add the edge $(v_{1_G,a},v_{s,b})$ if and only if $M_s(a,b) = 0$;
\item we define $\lambda_\phi: \Gamma_\Omega \to  \mathbb{R}_{>0}$ as $\lambda_\phi(v_{g,a}) = \exp(\phi(a))$ for every $g \in G$ and $a \in \Sigma$.
\end{itemize}

Then, $G$ acts on $\Gamma_\Omega$ in the natural way and $V(\Gamma_{1_G})$ corresponds to a fundamental domain of the action $G \acts \Gamma_\Omega$. In the language of dynamics, for every n.n. $G$-SFT with a safe symbol $\Omega$, there exists an almost transitive and locally finite graph $\Gamma_\Omega$ such that $G \acts \Omega$ and $G \acts X(\Gamma_\Omega)$ are conjugated. Moreover, it is clear that
$$
f_G(\Gamma_\Omega,\lambda_\phi) = p_G(\Omega,\phi),
$$
so in particular, all the representation and approximation theorems for free energy of hardcore models can be used to represent and approximate the pressure of n.n. $G$-SFTs $\Omega$ and potentials $\phi$ with a vacuum state, provided $(\Gamma_\Omega,\lambda_\phi)$ satisfies the corresponding hypotheses. Relevant cases like the \emph{Widom-Rowlinson model} \cite{2-georgii} and graph homomorphisms from $\Gamma$ to any finite graph with some vertex (which plays the role of a safe symbol) connected to every other vertex fall in this category.

\subsection{Topological entropy and constraintedness of n.n. $G$-SFTs with safe symbols}
\label{sec9-4}

Let $\Omega \subseteq \Sigma^G$ be a n.n. $G$-SFT with $\lvert \Sigma \rvert = n_s + n_u$, where $n_s$ denotes the number of safe symbols in $\Sigma$ and $n_u$ denotes the number of symbols that are not safe symbols (\emph{unsafe}). Consider the n.n. $G$-SFT $\Omega_{n_u} \subseteq \Sigma_{n_u}^G$ obtained after collapsing all the safe symbols in $\Sigma$ into a single one, so that the $\lvert \Sigma_{n_u} \rvert = 1 + n_u$, and construct the graph $\Gamma_{\Omega_{n_u}}$. Then, given $F \in \F(G)$, we have that
\begin{align*}
\lvert \Omega_F \rvert	&	=	\sum_{x \in X(\Gamma_{\Omega_{n_u}},FU_0)}	\prod_{v \in FU_0} 1^{x(v)}n_s^{1-x(v)}			\\
			&	=	\sum_{x \in X(\Gamma_{\Omega_{n_u}},FU_0)}	\prod_{v \in FU_0} \left(\frac{1}{n_s}\right)^{x(v)}n_s			\\
			&	=	n_s^{\lvert FU_0 \rvert} \sum_{x \in X(\Gamma_{\Omega_{n_u}},FU_0)} \prod_{v \in FU_0} \left(\frac{1}{n_s}\right)^{x(v)}		\\
			&	=	n_s^{\lvert FU_0 \rvert} Z_{\Gamma_{\Omega_{n_u}}}(FU_0, \frac{1}{n_s}),
\end{align*}
so, considering that $n_u = \lvert U_0 \rvert = \lvert \Gamma_{\Omega_{n_u}} / G \rvert$,
\begin{align*}
h_G(\Omega)	&	=	\lim_n \frac{\log \lvert \Omega_{F_n} \rvert}{\lvert F_n \rvert}								\\
			&	=	\lvert U_0 \rvert\log n_s + \lim_n \frac{Z_{\Gamma_{\Omega_{n_u}}}(FU_0, 1/n_s)}{\lvert F_n \rvert}	\\
			&	=	\lvert U_0 \rvert\log n_s + \lvert U_0 \rvert f_G(\Gamma_{\Omega_{n_u}},1/n_s)	\\
			&	=	n_u\left(\log n_s + f_G(\Gamma_{\Omega_{n_u}},1/n_s)\right).
\end{align*}

Therefore, to understand and approximate $h_G(\Omega)$ reduces to study the hardcore model on $\Gamma_{\Omega,n_s}$ with constant activity $\frac{1}{n_s}$. In particular, if
$$
\frac{1}{n_s} < \lambda_c(\mu(\Gamma_{\Omega_{n_u}})),
$$
the hardcore model $(\Gamma_{\Omega,n_s},1/n_s)$ satisfies exponential SSM and the theory developed in the previous sections applies. This motivates the definition of the {\bf constraintedness} of a n.n. $G$-SFT $\Omega$ as the connective constant of $\Gamma_{\Omega_{n_u}}$, i.e.,
$$
\mu(\Omega) := \mu(\Gamma_{\Omega_{n_u}}),
$$
which can be regarded as a measure of how much constrained is $\Omega$ (the higher $\mu(\Omega)$, the more constrained it is). Notice that if
$$
\frac{1}{n_s} < \lambda_c(\mu(\Omega)+1),
$$
then $(\Gamma_{\Omega_{n_u}},1/n_s)$ satisfies exponential SSM. In particular, $\Omega_{n_u}$ has a unique \emph{measure of maximal entropy} and therefore, also $\Omega$ has unique measure of maximal entropy, namely, the pushforward measure (see \cite{2-burton,1-haggstrom}). Moreover, the topological topological entropy of $\Omega_{n_u}$ has an arboreal representation and can be approximated efficiently. Since $\mu(\Omega) \leq \Delta(\Gamma_{\Omega_{n_u}}) - 1$, we have that it suffices that
$$
\frac{1}{n_s} < \lambda_c(\Delta(\Gamma_{\Omega_{n_u}})).
$$

For example, the n.n. $G$-SFT $\Omega$ represented in Figure \ref{fig:3cols} satisfies that $\Delta(\Gamma_{\Omega_{n_u}}) = 6$ and $\lambda_c(6) = \frac{5^5}{4^6} = \frac{3125}{4096}$; then, if $n_s > \frac{4096}{3125} = 1.31072$, we see that it suffices to have 2 copies of the safe symbol $0$ in order to have exponential SSM.

In general, since each vertex of the fundamental domain is connected to $n_u -1$ vertices in the clique and to at most $n_u$ vertices for each element $s$ in the generating set $S$, we see that each vertex in $\Gamma_{\Omega_{n_u}}$ is connected to at most $(n_u - 1) + \lvert S \rvert n_u$ other vertices. Then, we can estimate that
$$
\Delta(\Gamma_{\Omega_{n_u}}) \leq (\lvert S \rvert + 1)n_u - 1,
$$
so, in particular, if
$$
\frac{1}{n_s} < \lambda_c((\lvert S \rvert + 1)n_u - 1),
$$
exponential SSM holds (and therefore, again, uniqueness of measure of maximal entropy). This last equation and its relationship with the constraintedness of $\Omega$ has a similar flavor to the relationship between the percolation threshold $p_c(\mathbb{Z}^d)$ of the $\mathbb{Z}^d$ lattice and the concept of \emph{generosity} for $\mathbb{Z}^d$-SFTs introduced in \cite{1-haggstrom} by H\"aggstr\"om.

\begin{figure}[ht]
\centering
\includegraphics[scale = 2.4]{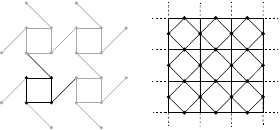}
\caption{On the left, an almost transitive graph $\Gamma$ with $\Gamma[U_0] \cong C_4$, the $4$-cycle. On the right, a portion of the line graph of $\Cay(\mathbb{Z}^2,\{\pm(1,0),\pm(0,1)\})$}
\label{fig:2graphs}
\end{figure}

\begin{remark}
It may be the case that a n.n. $G$-SFT $\Omega \subseteq \Sigma^G$ with a safe symbol could be represented by a graph $\Gamma$ which is better in terms of connectedness or maximum degree compared with the canonical representation $\Gamma_\Omega$, since we could encode $\Sigma$ using other fundamental domains, with a lower connectivity than the complete graph. For example, the n.n. $\mathbb{Z}^2$-SFT $\Omega_\Gamma$ corresponding to the graph $\Gamma$ on the left in Figure \ref{fig:2graphs} has $7$ symbols (the $7$ independent sets of the $4$-cycle), including a safe one. However, the canonical graph representation of $\Omega_\Gamma$, i.e., the graph $\Gamma_{\Omega_\Gamma}$, has a fundamental domain consisting of a clique with $6$ vertices, without considering extra connections. In particular, we see that both, $\Gamma$ and  $\Gamma_{\Omega_\Gamma}$ represent $\Omega$, but $\Delta(\Gamma) = 3 < 6 \leq \Delta(\Gamma_{\Omega_\Gamma})$. This motivates a finer notion of constraintedness, namely,
$$
\tilde{\mu}(\Omega) = \inf\{\mu(\Gamma): \Gamma \text{ represents } \Omega_{n_u}\},
$$
and the aforementioned results would still hold if we replace $\mu(\Omega)$ by $\tilde{\mu}(\Omega)$. Notice that a fundamental domain $U_0$ has at least $\lvert U_0\rvert+1$ independent sets (the empty one and all the singletons). In particular, this implies that $\tilde{\mu}(\Omega)$ is a minimum, since we only need to optimize over graphs $\Gamma$ with a fundamental domain $U_0$ such that $X(\Gamma[U_0]) = \lvert \Sigma_{n_u} \rvert$.
\end{remark}

\subsection{The monomer-dimer model and line graphs}

Given a graph $\Gamma = (V,E)$, we say that two different edges $e_1,e_2 \in E$ are \emph{incident} if they have one vertex in common. A {\bf matching} in $\Gamma$ is a subset $M$ of $E$ without incident edges. In a total parallel with the hardcore model case, we can represent a matching with an indicator function $m: E \to \{0,1\}$, denote the set of matchings of $\Gamma$ by $X^e(\Gamma)$, and define the associated partition function for some activity function $\lambda: E \to  \mathbb{R}_{> 0}$ as
$$
Z_\Gamma^{e}(\lambda) = \sum_{m \in X^e(\Gamma)} \prod_{e \in E}\lambda(e)^{m(e)}.
$$

The pair $(\Gamma,\lambda)$ is called the {\bf monomer-dimer model} and, as for the case of the hardcore model, we can define its associated free energy and Gibbs measures for a Gibbs specification adapted to this case.

An important feature of the monomer-dimer model is that, despite all its similarities with the hardcore model, it exhibits the SSM property for all values of $\lambda$ \cite{1-bayati} and, in particular, there is no phase transition \cite{1-heilmann}.

Considering this, most of the results presented in this paper, in particular the ones related to representation and approximation, can be adapted to counting matchings (see \cite{1-gamarnik} for a particular case), and there will not be a phase transition. One way to see this is through the \emph{line graph} $L(\Gamma)$ of the given graph $\Gamma$. Indeed, if we define $L(\Gamma)$ as the graph with set of vertices $E$ and set of edges containing all the adjacent edges in $E$, it is direct to see that there is a correspondence between matchings in $\Gamma$ and independent sets in $L(\Gamma)$, i.e.,
$$
Z_\Gamma^{e}(\lambda) = Z_{L(\Gamma)}(\lambda).
$$

In particular, this tell us that all the results in our paper that involve some restriction on $\lambda$, apply to every graph that can be obtained as a line graph of another one without restriction on $\lambda$. For example, the graph $\Gamma$ represented on the right in Figure \ref{fig:2graphs} corresponds to the line graph of the Cayley graph of $\mathbb{Z}^2$ with canonical generators, i.e.,
$$\Gamma = L(\Cay(\mathbb{Z}^2,\{\pm(1,0),\pm(0,1)\})).$$

Then, this observation implies that we can represent and approximate $f_{\mathbb{Z}^2}(\Gamma,\lambda)$ for every $\mathbb{Z}^2$-invariant activity function $\lambda$ on $\Gamma$.

\subsection{From almost free to free}
\label{subsec:9-7}

Suppose that $G \acts \Gamma_*$ is almost free, i.e., $\lvert \Stab_G(v) \rvert < \infty$ for all $v$. We proceed to show how to reduce the computation of $f_G(\Gamma,\lambda)$ to the computation of $f_G(\Gamma_*,\lambda_*)$ for some free action $G \acts \Gamma_*$, where $\Gamma_*$ and $\lambda_*$ are some suitable auxiliary graph and activity function, respectively. We consider this as another example of the versatility of independent sets for representing many phenomena.

Given a graph $\Gamma$ and an almost free action $G \acts \Gamma$, let $\Gamma_*$ be the new graph obtained by setting
$$
V(\Gamma_*) = \{(v,s): v \in V(\Gamma), s \in \Stab_G(v)\},
$$
and
$$
E(\Gamma_*) = \bigcup_{\substack{v \in V(\Gamma)\\s,s' \in \Stab_G(v)\\s \neq s'}} \{((v,s),(v,s'))\} \cup \bigcup_{\substack{(v,v') \in E(\Gamma)\\s \in \Stab_G(v)\\s' \in \Stab_G(v')}} \{((v,s),(v',s'))\}.
$$

In simple words, for each $v$ in $V(\Gamma)$ there are $\lvert \Stab_G(v) \rvert$ copies of $v$ in $V(\Gamma_*)$ such that
\begin{enumerate}
\item the $\lvert \Stab_G(v) \rvert$ copies of $v$ form a clique in $\Gamma_*$;
\item for $(v,v') \in E(\Gamma)$, each copy of $v$ is connected to all copies of $v'$ in $\Gamma_*$.
\end{enumerate}

Next, consider the activity function $\lambda_*: V(\Gamma_*) \to  \mathbb{R}_{>0}$ given by
$$\lambda_*((v,s)) = \lambda(v)/\lvert \Stab_G(v) \rvert.$$

Notice that for every $U \Subset V(\Gamma)$ we have that
$$
Z_\Gamma(U,\lambda) = Z_{\Gamma_*}(U_*,\lambda_*),
$$
where $U_* \Subset V(\Gamma_*)$ denotes the set of all copies of vertices in $U$. Indeed, notice that each independent set in $\Gamma_*[U_*]$ can be naturally identified with a unique independent set in $\Gamma[U]$: for $x' \in X(\Gamma_*)$, we can define $x \in X(\Gamma)$ as $x(v) = 1$ if and only if there exists $s \in \Stab_G(v)$ so that $x'((v,s)) = 1$. Conversely, if $\Gamma$ is finite, each independent set $x \in X(\Gamma)$ can be identified with $\prod_{v \in \Gamma} \lvert \Stab_G(v) \rvert^{x(v)}$ copies in $X(\Gamma_*)$. Therefore,
\begin{align*}
Z_{\Gamma_*}(U_*,\lambda_*)	&	=	\sum_{x' \in X(\Gamma_*[U_*])} \prod_{(v,s) \in U_*} \lambda_*((v,s))^{x'((v,s))}				\\
						&	=	\sum_{x \in X(\Gamma[U])} \prod_{v \in U} \lvert \Stab_G(v) \rvert^{x(v)} \prod_{v \in U} \left(\frac{\lambda(v)}{\lvert \Stab_G(v) \rvert}\right)^{x(v)}	\\
						&	=	\sum_{x \in X(\Gamma[U])} \prod_{v \in U} \lambda(v)^{x(v)}						\\
						&	=	Z_{\Gamma}(U,\lambda).
\end{align*}

Now, pick a fundamental domain $U_0 \subseteq V$ of $G \acts \Gamma$ and, for each $v \in U_0$, consider the set of left cosets $\{g\Stab_G(v): g \in G\}$ with a fixed set of representatives $R(v)$, i.e., $G = \bigsqcup_{g \in R(v)} g\Stab_G(v)$ for each $v \in U_0$. Without loss of generality, $1_G \in R(v)$ for every $v$. Next, given $v \in U_0$ and $h \in G$, define $t_{v,h}$ as the unique element in $\Stab_G(v)$ such that $ht_{v,h}^{-1} \in R(v)$. In addition, given another element $r \in G$, define $\psi^v_{h \to r} := ht_{v,h}^{-1}t_{v,r}h^{-1}$. Notice that if $t \in \Stab_G(v)$, then $t_{v,ht} = t_{v,h} t$ and $\psi^v_{ht \to rt} = \psi^v_{h \to r}$. Next, consider the action of $G$ on $\Gamma_*$ that, given a vertex $(u,s) \in V(\Gamma_*)$ and $h \in G$, it is defined as
$$
h \cdot (u,s) = (hu, hs\psi^v_{g \to hg}h^{-1}),
$$
where $u \in Gv$ for $v \in U_0$ and $g$ is some (or any) element in $G$ such that $u = gv$. First, let's check that this is indeed an action. If $v \in U_0$, then
\begin{align*}
r \cdot (h \cdot (v,1_G))	&	=	r \cdot (hv, h\psi^v_{1_G \to h}h^{-1})	\\
				&	=	r \cdot (hv, ht_{v,h}h^{-1})	\\
				&	=	(rhv, rht_{v,h}h^{-1}\psi^v_{h \to rh}r^{-1})	\\
				&	=	(rhv, rht_{v,h}h^{-1}(ht_{v,h}^{-1}t_{v,rh}h^{-1})r^{-1})	\\
				&	=	(rhv, rht_{v,rh}(rh)^{-1})	\\
				&	=	(rh) \cdot (v,1_G).
\end{align*}

Therefore, if $(u,s)$ is arbitrary, with $u = gv$ and $s = gtg^{-1}$ for $g \in R(v)$ and $t \in \Stab_G(v)$, we have that
\begin{align*}
r \cdot (h \cdot (u,s))	&	=	r \cdot (h \cdot ((gt) \cdot (v,1_G))	\\
				&	=	r \cdot ((hgt) \cdot (v,1_G))	\\
				&	=	(rhgt) \cdot (v,1_G)	\\
				&	=	(rh) \cdot ((gt) \cdot (v,1_G))	\\
				&	=	(rh) \cdot (u,s).
\end{align*}

Now, let's check that the action is free. If $h \cdot (v,1_G) = (v,1_G)$ for $v \in U_0$, then $(hv, ht_{v,h}h^{-1}) = (v,1_G)$, so $h \in \Stab_G(v)$ and $t_{v,h} = 1_G$. Therefore, $h = 1_G$. In the general case, if $h \cdot (gv,s) = (gv,s)$ with $u = gv$ and $s = gtg^{-1}$ for $g \in R(v)$ and $t \in \Stab_G(v)$, we have that $(hgt) \cdot (v, 1_G) = h \cdot ((gt) \cdot (v,1_G)) = (gt) \cdot (v,1_G)$, so $(t^{-1}g^{-1}hgt) \cdot (v,1_G) = (v,1_G)$ and, by the previous step, $t^{-1}g^{-1}hgt = 1_G$ or, equivalently, $h = 1_G$.

Now, if $G \acts \Gamma$ is almost transitive, then $G \acts \Gamma_*$ is almost transitive, too, with $\lvert \Gamma_*/G \rvert = \lvert \Gamma/G \rvert$. Indeed, if $U_0$ is a fundamental domain for $G \acts \Gamma$, we have that $U_0 \times \{1_G\}$ is a fundamental domain for $G \acts \Gamma_*$, since for every $(u,s) \in V(\Gamma_*)$ there exists a unique $v \in U_0$, $g \in G$, and $t \in \Stab_G(v)$ such that $u = gv$ and $s = gtg^{-1}$, so $(gt) \cdot (v,1_G) = (u,s)$.

Finally, if $K = \bigcup_{v \in U_0} \Stab_G(v)$, observe that $U_0 \times \{1_G\} \subseteq (U_0)_* \subseteq K (U_0 \times \{1_G\})$, so
$$
Z_{\Gamma_*}(F_n (U_0 \times \{1_G\}),\lambda_*) \leq Z_{\Gamma_*}(F_n (U_0)_*,\lambda_*) \leq Z_{\Gamma_*}(F_n K (U_0 \times \{1_G\}),\lambda_*)
$$
and, since $Z_\Gamma(F_nU_0,\lambda) = Z_{\Gamma_*}((F_nU_0)_*,\lambda_*) = Z_{\Gamma_*}(F_n(U_0)_*,\lambda_*)$, applying logarithms, dividing by $\lvert F_nU_0 \rvert$, and taking limit in $n$, we obtain that
$$
f_{G}(\Gamma,U_0,\lambda) = f_{G}(\Gamma_*,U_0 \times \{1_G\},\lambda_*),
$$
where we have used that $\{F_nK\}_n$ is a F{\o}lner sequence, $\lim_n \frac{\lvert F_nK \rvert}{\lvert F_n \rvert} = 1$, and $\lvert F_n(U_0 \times \{1_G\}) \rvert = \lvert F_nU_0 \rvert = \lvert F_n \rvert\lvert U_0 \rvert$.

\subsection{Spectral radius of matrices and occupation probabilities on trees}

A curious consequence of the hardcore model representation of a n.n. $G$-SFT with a safe symbol is that when $G = \mathbb{Z}$ and $S = \{1\}$, then $h_\mathbb{Z}(\Omega)$ has a well known characterization in terms of the transition matrix $M = M_1$ \cite{1-lind}. 

If $M$ is irreducible and aperiodic, there is always a unique stationary Markov chain $\Prob_M$ associated to $M$ such that $\log \lambda_M = h_\mathbb{Z}(\Omega_M)$, where $\lambda_M$ denotes the Perron eigenvalue of $M$ and we consider the natural invariant order in $\mathbb{Z}$. 

Now, if $M$ is a matrix such that the $i$th row and the $i$th column have no zeros, then $M$ is irreducible and aperiodic. In fact, the $i$th symbol, let's call it $a$, is a safe symbol. In this case, we have that
$$
\log \lambda_M = h_\mathbb{Z}(\Omega_M) = -\log \Prob_M([a^{0}] \vert [a^{-\mathbb{N}}]) = -\log \Prob_M([a^{0}] \vert [a^{-1}]),
$$

Therefore, $\lambda_M = \frac{1}{\Prob_M(a^{0} \vert a^{-1})}$ and to compute the spectral radius of $M$ reduces to compute $\Prob_M(a^{0} \vert a^{-1})$. For example, consider the following matrix 
$$
M = 
\left(\begin{array}{cccc}
1	&	1	&	1	&	1	\\
1	&	1	&	0	&	1	\\
1	&	0	&	1	&	0	\\
1	&	1	&	1	&	0	\\
\end{array}\right),
$$
where $a$ is the symbol associated to the first row. Given this matrix $M$, we can always construct a graph representation $\Gamma_{\Omega_M}$ of $\Omega_M$ as in Figure \ref{fig:graph1d}.

\begin{figure}[ht]
\centering
\includegraphics[scale = 2.4]{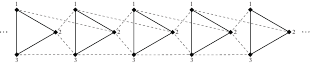}
\caption{A graph representation of the $0$-$1$ matrix $M$}
\label{fig:graph1d}
\end{figure}

Now, it is known that $\lvert \Prob_M(0^{0} \vert 0^{-1}) - \Prob_M(0^{0} \vert 0^{\{n,-1\}}) \rvert$ goes to zero exponentially fast as $n$ goes to infinity and, since every Markov chain is a Markov random field, we have that
\begin{align*}
\Prob_M(a^{0} \vert a^{\{n,-1\}})	&	=	\Prob_{\Gamma_{\Omega_M}[\{0,\dots,n-1\}U_0],1}(0^{U_0})	\\
							&	=	\sum_{i=1}^{U_0}\Prob_{\Gamma_{\Omega_M}[\{0,\dots,n-1\}U_0],1}(0^{v_i} \vert 0^{\{v_1,\dots,v_{i-1}\}})	\\
							&	=	\sum_{i=1}^{U_0}\Prob_{\Gamma_{\Omega_M}[\{0,\dots,n-1\}U_0 \setminus \{v_1,\dots,v_{i-1}\}],1}(0^{v_i})	\\
							&	=	\sum_{i=1}^{U_0}\Prob_{T_\mathrm{SAW}\Gamma_{\Omega_M}[\{0,\dots,n-1\}U_0 \setminus \{v_1,\dots,v_{i-1}\}],1}(0^{v_i}).
\end{align*}

This gives us an arboreal representation and a method to compute the spectral radius of any matrix $M$ satisfying the conditions described above, that we believe could be of independent interest.

\section*{Acknowledgements}

I would like to thank Brian Marcus for his valuable suggestions after reading a first draft of this work, Tom Meyerovitch for a helpful discussion about invariant random orders, and the anonymous referee for a careful reading of the manuscript and many useful comments and suggestions. I acknowledge the support of ANID/FONDECYT de Iniciación en Investigación 11200892.

\bibliographystyle{abbrv}
\bibliography{biblio}

\begin{thebibliography}{10}

\bibitem{1-alpeev}
A.~Alpeev, T.~Meyerovitch, and S.~Ryu.
\newblock Predictability, topological entropy and invariant random orders.
\newblock {\em Proc. Amer. Math. Soc.}, 149:1443--1457, 2021.

\bibitem{1-arora}
S.~Arora and B.~Barak.
\newblock {\em Computational Complexity - {A} Modern Approach}.
\newblock Cambridge University Press, New York, NY, USA, 2009.

\bibitem{1-auslander1967}
L.~Auslander.
\newblock On a problem of {P}hilip {H}all.
\newblock {\em Ann. Math.}, pages 112--116, 1967.

\bibitem{1-barvinok}
A.~Barvinok.
\newblock {\em Combinatorics and complexity of partition functions}, volume~9.
\newblock Springer, Cham, Switzerland, 2016.

\bibitem{1-baumslag1991}
G.~Baumslag, F.~B. Cannonito, D.~J.~S. Robinson, and D.~Segal.
\newblock The algorithmic theory of polycyclic-by-finite groups.
\newblock {\em J. Algebra}, 142(1):118--149, 1991.

\bibitem{1-baxter}
R.~J. Baxter.
\newblock Hard hexagons: exact solution.
\newblock {\em J. Phys. A}, 13(3):L61, 1980.

\bibitem{1-bayati}
M.~Bayati, D.~Gamarnik, D.~Katz, C.~Nair, and P.~Tetali.
\newblock {Simple Deterministic Approximation Algorithms for Counting
  Matchings}.
\newblock In {\em Proceedings of the Thirty-Ninth Annual ACM Symposium on
  Theory of Computing}, pages 122--127, 2007.

\bibitem{1-briceno}
R.~Brice{\~n}o.
\newblock The topological strong spatial mixing property and new conditions for
  pressure approximation.
\newblock {\em Ergod. Theory Dynam. Systems}, 38(5):1658--1696, 2018.

\bibitem{2-briceno}
R.~Brice{\~n}o.
\newblock An {SMB} approach for pressure representation in amenable virtually
  orderable groups.
\newblock {\em J. Anal. Math.}, 142:421--451, 2020.

\bibitem{2-brightwell}
G.~R. Brightwell and P.~Winkler.
\newblock Graph homomorphisms and phase transitions.
\newblock {\em J. Combin. Theory Ser. B}, 77(2):221--262, 1999.

\bibitem{2-burton}
R.~Burton and J.~E. Steif.
\newblock New results on measures of maximal entropy.
\newblock {\em Israel J. Math.}, 89:275--300, 1995.

\bibitem{1-ceccherini}
T.~Ceccherini-Silberstein and M.~Coornaert.
\newblock {\em {C}ellular {A}utomata and {G}roups}.
\newblock Springer, Berlin Heidelberg, 2010.

\bibitem{2-dobrushin}
P.~Dobrushin.
\newblock The description of a random field by means of conditional
  probabilities and conditions of its regularity.
\newblock {\em Theory Probab. its Appl.}, 13(2):197--224, 1968.

\bibitem{1-downarowicz}
T.~Downarowicz, B.~Frej, and P.-P. Romagnoli.
\newblock Shearer's inequality and infimum rule for {S}hannon entropy and
  topological entropy.
\newblock In S.~Kolyada, M.~M{\"o}ller, P.~Moree, and T.~Ward, editors, {\em
  Dynamics and Numbers}, chapter~3, pages 63--75. American Mathematical
  Society, Providence, Rhode Island, 2016.

\bibitem{1-duminil}
H.~Duminil-Copin and S.~Smirnov.
\newblock The connective constant of the honeycomb lattice equals
  $\sqrt{2+\sqrt{2}}$.
\newblock {\em Ann. Math.}, 175:1653--1665, 2012.

\bibitem{1-gamarnik}
D.~Gamarnik and D.~Katz.
\newblock Sequential cavity method for computing free energy and surface
  pressure.
\newblock {\em J. Stat. Phys.}, 137(2):205--232, 2009.

\bibitem{1-georgii}
H.-O. Georgii.
\newblock {\em Gibbs Measures and Phase Transitions}.
\newblock {d}e Gruyter, Berlin, 2 edition, 2011.

\bibitem{2-georgii}
H.-O. Georgii, O.~H{\"a}ggstr{\"o}m, and C.~Maes.
\newblock {\em The random geometry of equilibrium phases}, volume~18 of {\em
  Phase Transitions and Critical Phenomena}.
\newblock Academic Press, London, 2001.

\bibitem{1-gromov}
M.~Gromov.
\newblock Groups of polynomial growth and expanding maps.
\newblock {\em Publications Math{\'e}matiques de l'Institut des Hautes
  {\'E}tudes Scientifiques}, 53(1):53--78, 1981.

\bibitem{1-gurevich}
B.~M. Gurevich and A.~A. Tempelman.
\newblock A {B}reiman type theorem for {G}ibbs measures.
\newblock {\em J. Dyn. Control Syst.}, 13(3):363--371, 2007.

\bibitem{1-haggstrom}
O.~H{\"a}ggstr{\"o}m.
\newblock On phase transitions for subshifts of finite type.
\newblock {\em Israel J. Math.}, 94:319--352, 1996.

\bibitem{1-heilmann}
O.~J. Heilmann and E.~H. Lieb.
\newblock {Theory of monomer-dimer systems}.
\newblock {\em Commun. Math. Phys.}, 25(3):190--232, 1972.

\bibitem{1-hochman}
M.~Hochman and T.~Meyerovitch.
\newblock A characterization of the entropies of multidimensional shifts of
  finite type.
\newblock {\em Ann. Math.}, 171(3):201--2038, 2010.

\bibitem{1-jerrum}
M.~R. Jerrum, L.~G. Valiant, and V.~V. Vazirani.
\newblock Random generation of combinatorial structures from a uniform
  distribution.
\newblock {\em Theor. Comput. Sci.}, 43:169--188, 1986.

\bibitem{1-kelly}
F.~P. Kelly.
\newblock Stochastic models of computer communication systems.
\newblock {\em J. R. Stat. Soc. Series B Stat. Methodol.}, 47(3):379--395,
  1985.

\bibitem{1-kerr}
D.~Kerr and H.~Li.
\newblock {\em Ergodic theory. Independence and Dichotomies}.
\newblock Springer, Cham, Switzerland, 2016.

\bibitem{1-lind}
D.~Lind and B.~Marcus.
\newblock {\em An introduction to symbolic dynamics and coding}.
\newblock Cambridge Univ. Press, Cambridge, 1995.

\bibitem{1-lindenstrauss}
E.~Lindenstrauss.
\newblock Pointwise theorems for amenable groups.
\newblock {\em Invent. Math.}, 146(2):259--295, 2001.

\bibitem{1-lipton}
R.~J. Lipton and Y.~Zalcstein.
\newblock Word problems solvable in logspace.
\newblock {\em J. Assoc. Comput. Mach.}, 24(3):522--526, 1977.

\bibitem{2-sinclair}
J.~Liu, A.~Sinclair, and P.~Srivastava.
\newblock The {I}sing {P}artition {F}unction: {Z}eros and {D}eterministic
  {A}pproximation.
\newblock {\em J. Stat. Phys.}, 174(2):287--315, 2019.

\bibitem{1-luby}
M.~Luby and E.~Vigoda.
\newblock {Approximately Counting Up To Four}.
\newblock In {\em Proceedings of the Twenty-Ninth Annual ACM Symposium on
  Theory of Computing}, pages 682--687, 1997.

\bibitem{1-lyndon}
R.~C. Lyndon and P.~E. Schupp.
\newblock {\em Combinatorial Group Theory}.
\newblock Springer-Verlag, Berlin Heidelberg, 2001.

\bibitem{1-maltsev1951}
A.~I. Mal'tsev.
\newblock On some classes of infinite soluble groups.
\newblock {\em Mat. Sb.}, 70(3):567--588, 1951.

\bibitem{3-marcus}
B.~Marcus and R.~Pavlov.
\newblock Approximating entropy for a class of $\mathbb{Z}^2$ {M}arkov random
  fields and pressure for a class of functions on $\mathbb{Z}^2$ shifts of
  finite type.
\newblock {\em Ergodic Theory Dynam. Systems}, 33:186--220, 2013.

\bibitem{2-marcus}
B.~Marcus and R.~Pavlov.
\newblock Computing bounds for entropy of stationary {$\mathbb{Z}^d$} {M}arkov
  random fields.
\newblock {\em SIAM J. Discrete Math.}, 27(3):1544--1558, 2013.

\bibitem{1-marcus}
B.~Marcus and R.~Pavlov.
\newblock An integral representation for topological pressure in terms of
  conditional probabilities.
\newblock {\em Israel J. Math.}, 207(1):395--433, 2015.

\bibitem{1-meier}
J.~Meier.
\newblock {\em Groups, Graphs and Trees: An Introduction to the Geometry of
  Infinite Groups}.
\newblock London Mathematical Society Student Texts. Cambridge University
  Press, Cambridge, 2008.

\bibitem{1-milnor}
J.~Milnor.
\newblock {Growth of finitely generated solvable groups}.
\newblock {\em J. Differ. Geom.}, 2(4):447--449, 1968.

\bibitem{1-mura}
R.~Mura and A.~Rhemtulla.
\newblock {\em Orderable groups}.
\newblock {L}ecture {N}otes in {P}ure and {A}pplied {M}athematics, vol. 27,
  Marcel Dekker, Inc., New York and Basel, 1977.

\bibitem{1-pavlov}
R.~Pavlov.
\newblock Approximating the hard square entropy constant with probabilistic
  methods.
\newblock {\em Ann. Probab.}, 40(6):2362--2399, 2012.

\bibitem{1-ruelle}
D.~Ruelle.
\newblock {\em Thermodynamic Formalism. {T}he Mathematical Structure of
  Equilibrium Statistical Mechanics}.
\newblock Cambridge Math. Lib. Cambridge Univ. Press, Cambridge, 2 edition,
  2004.

\bibitem{1-sabidussi}
G.~Sabidussi.
\newblock On a class of fixed-point-free graphs.
\newblock {\em Proc. Amer. Math. Soc.}, 9(5):800--804, 1958.

\bibitem{1-segal}
D.~Segal.
\newblock {\em Polycyclic Groups}.
\newblock Cambridge Tracts in Mathematics. Cambridge University Press,
  Cambridge, 1983.

\bibitem{1-simon}
B.~Simon.
\newblock {\em The statistical mechanics of lattice gases. {V}olume {I}}.
\newblock Princeton University Press, Princeton, 1993.

\bibitem{1-simonword}
H.~U. Simon.
\newblock Word problems for groups and context free recognition.
\newblock In {\em Fundamentals of Computation Theory (Proc. Conf. Algebraic,
  Arith. and Categorical Methods in Comput. Theory, Berlin/Wendisch-Rietz,
  1979)}, pages 417--422, Berlin, 1979. Akademie-Verlag.

\bibitem{1-sinclair}
A.~Sinclair, P.~Srivastava, D.~{\v{S}}tefankovi{\v{c}}, and Y.~Yin.
\newblock Spatial mixing and the connective constant: {O}ptimal bounds.
\newblock {\em Probab. Theory Relat. Fields}, 168(1-2):153--197, 2017.

\bibitem{3-sinclair}
A.~Sinclair, P.~Srivastava, and M.~Thurley.
\newblock {Approximation Algorithms for Two-State Anti-Ferromagnetic Spin
  Systems on Bounded Degree Graphs}.
\newblock {\em J. Stat. Phys.}, 155:666--686, 2014.

\bibitem{1-sly}
A.~Sly.
\newblock {Computational Transition at the Uniqueness Threshold}.
\newblock In {\em 51th Annual {IEEE} Symposium on Foundations of Computer
  Science}, pages 287--296, 2010.

\bibitem{2-sly}
A.~Sly and N.~Sun.
\newblock {The Computational Hardness of Counting in Two-Spin Models on
  $d$-Regular Graphs}.
\newblock In {\em 53rd Annual {IEEE} Symposium on Foundations of Computer
  Science}, pages 361--369, 2012.

\bibitem{1-sokal}
A.~D. Sokal.
\newblock A personal list of unsolved problems concerning lattice gases and
  antiferromagnetic {P}otts models.
\newblock {\em Markov Process. Related Fields}, 7:21--38, 2001.

\bibitem{1-swan1967}
R.~G. Swan.
\newblock Representations of polycyclic groups.
\newblock {\em Proc. Amer. Math. Soc.}, 18(3):573--574, 1967.

\bibitem{1-tits1972}
J.~Tits.
\newblock Free subgroups in linear groups.
\newblock {\em J. Algebra}, 20(2):250--270, 1972.

\bibitem{1-wang}
Y.-K. Wang, Y.~Yin, and S.~Zhong.
\newblock Approximate capacities of two-dimensional codes by spatial mixing.
\newblock In {\em 2014 IEEE International Symposium on Information Theory},
  pages 1061--1065, 2014.

\bibitem{2-weitz}
D.~Weitz.
\newblock Combinatorial criteria for uniqueness of {G}ibbs measures.
\newblock {\em Random Structures Algorithms}, 27(4):445--475, 2005.

\bibitem{1-weitz}
D.~Weitz.
\newblock {Counting Independent Sets Up to the Tree Threshold}.
\newblock In {\em Proceedings of the Thirty-eighth Annual ACM Symposium on
  Theory of Computing}, pages 140--149, 2006.

\bibitem{1-xia}
M.~Xia and W.~Zhao.
\newblock $\#$3-{R}egular {B}ipartite {P}lanar {V}ertex {C}over is
  $\#${P}-{C}omplete.
\newblock In {\em International Conference on Theory and Applications of Models
  of Computation}, pages 356--364, 2006.

\end{thebibliography}

\end{document}